\newcommand{\Q}{\mathbb Q}
\newcommand{\Z}{\mathbb Z}
\newcommand{\C}{\mathbb C}
\newcommand{\R}{\mathbb R}
\newcommand{\p}{\mathfrak p}
\newcommand{\frp}{\mathfrak p}
\newcommand{\gal}{\mathrm{Gal}}
\renewcommand{\epsilon}{\varepsilon}
\newcommand{\Kur}{K_{\mathrm{nr}}}
\newcommand{\Nur}{N_{\mathrm{nr}}}
\newcommand{\oo}{\mathcal O}
\newcommand{\calN}{\mathcal{N}}
\newcommand{\calO}{\mathcal O}
\newcommand{\calT}{\mathcal T}
\newcommand{\Tr}{\mathcal{T}}
\newcommand{\ON}{\mathcal{O}_N}
\newcommand{\OK}{\mathcal{O}_K}
\newcommand{\OL}{\mathcal{O}_L}
\newcommand{\ff}{\mathcal F}
\newcommand{\calL}{\mathcal{L}}
\newcommand{\M}{\mathcal M}
\newcommand{\mm}{\mathfrak M}
\newcommand{\CGa}{\C[\Gamma]}
\newcommand{\ZG}{{\Z[G]}}
\newcommand{\OKG}{\calO_K[G]}
\newcommand{\ZGa}{\Z[\Gamma]}
\newcommand{\Tloc}{T\Omega^\mathrm{loc}(F/E, 1)}
\newcommand{\Irr}{\mathrm{Irr}}
\newcommand{\ZpG}{\Z_p[G]}
\newcommand{\QpG}{\Q_p[G]}
\newcommand{\Qp}{{\Q_p}}
\newcommand{\Zp}{{\Z_p}}
\newcommand{\Qpc}{\Q_p^c}
\newcommand{\lra}{\longrightarrow}
\newcommand{\Opt}{\calO_p^t}
\newcommand{\sseq}{\subseteq}
\newcommand{\Ext}{\mathrm{Ext}}
\newcommand{\Hom}{\mathrm{Hom}}
\newcommand{\Kappa}{\mathcal{K}}
\newtheorem{teo_intro}{Theorem}
\newtheorem{coroll_intro}{Corollary}
\newtheorem{teo}{Theorem}[subsection]
\newtheorem{lemma}[teo]{Lemma}
\newtheorem{prop}[teo]{Proposition}
\theoremstyle{remark}
\newtheorem{remark_intro}{Remark}
\newtheorem{conjecture_intro}{Conjecture}
\theoremstyle{definition}
\newtheorem*{property}{Property}
\title{Equivariant epsilon constant conjectures\\for weakly ramified extensions}
\author{Werner Bley and Alessandro Cobbe}
\date{}
\begin{document}
\maketitle
\begin{abstract}
We study the local epsilon constant conjecture as formulated by Breuning in \cite{Breuning04}. This conjecture fits into the
general framework of the equivariant Tamagawa number conjecture (ETNC) and should be interpreted as a consequence of the expected compatibility of
the ETNC with the functional equation of Artin-$L$-functions. 

Let $K/\Q_p$ be unramified. Under some mild technical assumption we prove Breuning's conjecture for weakly ramified abelian extensions $N/K$ with cyclic ramification group. 
As a consequence of Breuning's local-global principle we obtain the validity of the global
epsilon constant conjecture as formulated in  \cite{BlBu03} and of Chinburg's $\Omega(2)$-conjecture as stated in \cite{Chinburg85}
for certain infinite families $F/E$ of weakly and wildly ramified extensions of number fields.
\end{abstract}

\begin{section}{Introduction}
We fix a Galois extension $F/E$ of number fields and set $\Gamma := \gal(F/E)$. Let $S$ be a sufficiently large finite set
of places of $E$ which, in particular, includes all archimedean places and all places which ramify in $F/E$. Let $\zeta_{F/E, S}(s)$
denote the $S$-truncated equivariant zeta-function of $F/E$ as defined in \cite[Sec.~2.3]{BrBu07} which takes values in the centre $Z(\CGa)$ of the complex group ring $\CGa$.
We recall that $\zeta_{F/E, S}(s)$ can be considered as the vector consisting of $S$-truncated Artin $L$-functions for all irreducible
characters of $\Gamma$. For each rational integer $m$ we write $\zeta_{F/E,S}^*(m)$ for the leading non-zero coefficient in the Taylor expansion
of $\zeta_{F/E, S}(s)$ at $s=m$. It follows easily that $\zeta_{F/E,S}^*(m)$ is contained in the unit group of $Z(\R [\Gamma])$ 
(cf. \cite[Lemma 2.7]{BrBu07}). 

Continuing work of Burns in \cite{Bur01}, Breuning and Burns 
formulate in \cite{BrBu07} explicit conjectures for the image of $\zeta_{F/E,S}^*(0)$, resp.  $\zeta_{F/E,S}^*(1)$, under the canonical homomorphism
$\hat\partial$ from $Z(\R [\Gamma])$ to the relative algebraic $K$-group $K_0(\ZGa, \R)$. We recall that the conjectural formula for $\zeta_{F/E,S}^*(0)$ 
is equivalent to the \emph{lifted root number conjecture} of Gruenberg, Ritter and Weiss (cf. \cite{GRW}), and moreover, is expected to be equivalent to the 
equivariant Tamagawa number conjecture for the pair $(h^0(\mathrm{Spec}(F)), \ZGa)$ (cf. \cite[Prop.~4.4 and Rem.~4.5]{BrBu07}). Under some technical
hypothesis  the conjectural formula for $\zeta_{F/E,S}^*(1)$ is shown in \cite[Th.~1.1 and Cor.~1.2]{BrBu10} to be equivalent to the equivariant Tamagawa number conjecture
as applied to the pair  $(h^0(\mathrm{Spec}(F))(1), \ZGa)$.

In this paper we provide new evidence for the functional equation compatibility of these conjectures.
To be more specific, we recall that Breuning and Burns define elements $T\Omega(F/E, m)$ in  $K_0(\ZGa, \R)$ for $m = 0,1$  and state their conjectures 
in the form $T\Omega(F/E, m) = 0$ (cf. \cite[Conj.~3.3 and 4.1]{BrBu07}). Motivated by the work in \cite{BlBu03} they define a further element
$\Tloc$ in $K_0(\ZGa, \R)$ and show in \cite[Th.~5.2]{BrBu07} that
\[
\psi_\Gamma^*\left( T\Omega(F/E, 0) \right) -  T\Omega(F/E, 1) = \Tloc.
\]
Here $\psi_\Gamma^*$ denotes a natural involution on the algebraic $K$-group $K_0(\ZGa, \R)$.

The leading term conjectures for $\zeta_{F/E,S}^*(0)$ and  $\zeta_{F/E,S}^*(1)$ force the following conjecture which we want to study in this paper.

\begin{conjecture_intro}\label{global eps conj} (cf. \cite[Conj.~5.3]{BrBu07})
One has the equality
\[
\Tloc = 0 
\]
in  $K_0(\ZGa, \R)$.
\end{conjecture_intro}

By \cite[Rem.~5.4]{BlBu03} Conjecture  \ref{global eps conj} is equivalent to Conjecture 4.1 of \cite{BlBu03}.
We recall that for every Galois extension $F/E$ the invariant $\Tloc$ lies in the finite group $K_0(\ZGa, \Q)_{\mathrm{tors}}$, the torsion subgroup of
$K_0(\ZGa, \Q) \sseq K_0(\ZGa, \R)$ (\cite[Cor.~6.3 (i)]{BlBu03}). Moreover, 
Conjecture \ref{global eps conj} is known if $F/E$ is at most tamely ramified (\cite[Cor.~7.7]{BlBu03}), if 
$F$ is an abelian extension of $\Q$ with odd conductor (\cite[Cor.~5.4 (ii)]{BlBu03}) or if $F$ is an extension of $\Q$ of degree $\le 15$  (\cite[Cor.~7]{BlDe13}).
We also recall that by \cite[Rem.~4.2 (iv)]{BlBu03} Conjecture  \ref{global eps conj} implies Chinburg's $\Omega(2)$-conjecture as stated in  \cite{Chinburg85}.

Conjecture  \ref{global eps conj} is essentially of local nature. In fact, it is a local approach which lies behind the proofs of the known
cases mentioned above. Based on this observation, Breuning  stated in \cite{Breuning04} an independent conjecture for Galois extensions $N/K$ of local number fields.
We write $G$ for the Galois group of $N/K$.
Breuning defined an element $R_{N/K}$ in $K_0(\Z_p[G], \Q_p)$ incorporating local epsilon constants and algebraic invariants associated
to $N/K$. We will briefly recall the definition of $R_{N/K}$ in Section \ref{Breunings conjecture}. Breuning stated the following conjecture.

\begin{conjecture_intro}\label{local eps conj} (cf. \cite[Conj.~3.2]{Breuning04})
One has the equality
\[
R_{N/K} = 0 
\]
in  $K_0(\Z_p[G], \Q_p)$.
\end{conjecture_intro}
Since $\Tloc$ is contained in the subgroup $K_0(\ZGa, \Q)$ it can be studied prime by prime.
We let $\Tloc_p \in  K_0(\Z_p[\Gamma], \Q_p)$ denote its $p$-primary part.
Then the local conjecture is related to the global conjecture by the equation
\[
\Tloc_p = \sum_v i_{\Gamma_w}^\Gamma \left( R_{F_w / E_v} \right),
\]
where $v$ runs through all places of $E$ above $p$, $w$ is a fixed place of $F$ lying over $v$, $\Gamma_w$ denotes the decomposition group and
$i_{\Gamma_w}^\Gamma$ is the induction map on the relative algebraic $K$-group, cf. \cite[Th.~4.1]{Breuning04}.

In \cite{Breuning04,Breuning04b} Breuning proved Conjecture \ref{local eps conj} for tamely ramified extensions, for abelian
extensions of $\Q_p$ with $p \ne 2$, for all $S_3$-extensions of $\Q_p$, and for certain families of dihedral and quaternion extensions.
If $p$ is odd,  an algorithmic proof for Conjecture \ref{local eps conj} is given in \cite{BlDe13} for all Galois extensions of degree $\le 15$. If $p=2$, the conjecture is
also proved in loc.cit. for all non-abelian Galois extensions of $\Q_2$ with $[N : \Q_2] \le 15$ and, in addition, for all abelian extensions $N/\Q_2$ with
$[N : \Q_2] \le 7$.

In this manuscript we will focus on weakly and wildly ramified extensions $N/K$ of an unramified extension $K/\Q_p$. We recall that $N/K$ is weakly ramified, if the second
ramification group in lower numbering is trivial.

We state the main result of our work.

\begin{teo_intro}\label{main theorem}
Let $p$ be an odd prime and let $K/\Q_p$ be a finite unramified extension. Let $m$ denote the degree of $K/\Q_p$.
Let $N/K$ be a weakly and wildly ramified finite abelian extension with cyclic ramification group. Let $d$ denote the inertia degree of
$N/K$ and assume that $m$ and $d$ are relatively prime. Then Conjecture \ref{local eps conj} is true for $N/K$.
\end{teo_intro}

\begin{remark_intro}
The assumptions of the theorem imply that the ramification group is cyclic of order $p$ (cf. \cite[Cor.~3.4]{PickettVinatier}). 
More precisely, $|G| = pd, |G_0| = |G_1| = p$ and $|G_i| = 1$ for $i \ge 2$. Here $G_i$ for $i \ge 0$ denotes the higher ramification subgroup.
\end{remark_intro}

The invariant $R_{N/K}$ incorporates amongst other terms the equivariant local epsilon constant and a certain equivariant
discriminant attached to $N/K$. Whereas the main ingredient in the definition of the equivariant epsilon constant is a local 
Gau\ss\ sum, equivariant discriminants are closely related to norm-resolvents. The relation between norm-resolvents and Galois Gau\ss\ sums
in the context of Theorem \ref{main theorem} is analyzed by Pickett and Vinatier in \cite{PickettVinatier}. Indeed, Theorem 2 of
loc.cit. was one of the main motivations and a starting point for our project. In addition, the strategy for the proof of 
Theorem \ref{main theorem} was inspired by the reductions 
made in Section 3.3 of loc.cit.

The above relation between
$T\Omega^{\rm loc}(F/E,1)_p$ and $R_{F_w/E_v}$ implies results for global Galois extensions $F/E$
which satisfy the following property. 

\begin{property}[$\bf{*}$]
We say that the Galois extension $F/E$ of number fields satisfies Property $(*)$ if for every wildly
ramified place $v$ of $E$ with $w | v | p$ one of the following cases is satisfied
\begin{itemize}
\item[a)] $E_v = \Q_p$, $p>2$ and $\Gamma_w$ is abelian,
\item[b)] $E_v = \Q_p$, $p=2$, $\Gamma_w$ is abelian and $|\Gamma_w|\le 7$,
\item[c)] $E_v = \Q_p$, $p\ge2$, $\Gamma_w$ is non-abelian and $|\Gamma_w|\le 15$,
\item[d)] $E_v/\Q_p$ is unramified, $p > 2$, $F_w/E_v$ is abelian and weakly ramified with 
cyclic ramification group and $[E_v:\Q_p]$ is coprime with the inertia degree of $F_w / E_v$.
\end{itemize}
\end{property}

Every tamely ramified extension $F/E$  obviously satisfies Property $(*)$.
It is easy to construct infinite families of weakly and wildly ramified extensions of number fields which satisfy condition d) using class field theory. 
In particular, 
if $p$ is an odd prime, $E/\Q$ an extension of number fields in which $p$ is unramified and $F/E$ a cyclic extension of degree $p$ which is at most
weakly ramified, then $F/E$ satisfies Property $(*)$.

\begin{coroll_intro}
\label{cor:globeps}
Conjecture \ref{global eps conj} is valid for all
Galois extensions $F/E$ which satisfy Property $(*)$.
\end{coroll_intro}

The projection onto the class group also proves
Chinburg's conjecture:

\begin{coroll_intro}
\label{cor:omega2}
Chinburg's $\Omega(2)$-conjecture is valid for all
Galois extensions $F/E$  which satisfy Property $(*)$.
\end{coroll_intro}

Moreover, the functorial properties of \cite[Prop.~3.3]{Breuning04}
imply the following result:

\begin{coroll_intro}
\label{cor:relative}
Conjecture \ref{global eps conj}  and Chinburg's $\Omega(2)$-conjecture
are valid for global Galois extensions $F/E$
for which $E' \subseteq E \subseteq F \subseteq F'$ with a Galois extension $F'/E'$  that satisfies Property~$(*)$.
\end{coroll_intro}

In Section \ref{Breunings conjecture} we will first recall Breuning's conjecture and then give a short description of the organization of
the manuscript.

{\bf Notations} Given a field extension $F/E$, we will denote the norm and the trace  by $\mathcal N_{F/E}$ and $\mathcal T_{F/E}$ respectively. 
If $K$ is a local field, then $v_K$ will always denote its normalized valuation. We will write $\OK$ and $\frp_K$ for the valuation ring and
the maximal ideal respectively.  Furthermore,  $U_K$ will denote the units of $\OK$ and $U_K^{(n)}:=\{u\in U_K: u\equiv 1\pmod{\p_K^n}\}$ the higher principal units.

If $K$ is a field we write $K^c$ for an algebraic closure. For a finite group $G$ we write $\Irr_{\Q^c}(G)$ for the set of 
absolutely irreducible characters of $G$. We often implicitly fix an embedding $\Q^c \hookrightarrow \Qpc$ and view $\Q^c$-valued characters as
valued in $\Qpc$.
 
If $H \le G$ is a subgroup, then  $e_H=\frac{1}{|H|}\sum_{\sigma\in H}\sigma$ denotes the usual subgroup idempotent. We also set $T_H := |H|e_H$.
For $a\in G$ we abbreviate $e_a=e_{\langle a\rangle}$ and $T_a=T_{\langle a\rangle}$.

For a $\Z$-module $M$ and a prime $p$ we often write $M_p$ for $M \otimes_\Z \Zp$.

\end{section}

\begin{section}{The local epsilon constant conjecture}\label{Breunings conjecture}
In this section we briefly recall the formulation of Breuning's local epsilon constant conjecture.
For further details we refer the reader to \cite[Sec.~2]{Breuning04}. 

\begin{subsection}{The shape of the conjecture}\label{shape}

The element $R_{N/K}$ is of the form
\[
R_{N/K}=T_{N/K}+C_{N/K}+U_{N/K}-M_{N/K}
\]
where each of the terms is an element in $K_0(\ZpG, \Qpc)$. This algebraic $K$-group lies in an exact localization sequence
of the form
\[
K_1(\ZpG) \lra K_1(\Qpc[G]) \lra  K_0(\ZpG, \Qpc) \lra 0.
\]
If $G$ is abelian, the determinant induces an isomorphism $K_1(\Qpc[G]) \simeq \Qpc[G]^\times$. Since $\ZpG$ is semilocal the natural
map $\ZpG^\times \lra K_1(\ZpG)$ is onto, so that in the abelian case we can and will identify $K_0(\ZpG, \Qpc)$ with $ \Qpc[G]^\times / \ZpG^\times$.
Furthermore, we identify $\Qpc[G]^\times$ with $\prod_\chi \left( \Qpc \right) ^\times$ where $\chi$ runs through the
set $\Irr_{\Q^c}(G)$.

The term $T_{N/K}$ is called the \emph{equivariant local epsilon constant}. If $K$ is a finite extension of $\Q_p$ and $\chi$ a character of
$\gal(K^c/K)$ with values in $\Q^c$ we write $\tau_K(\chi) \in \Q^c$ for the local 
Galois Gau\ss\ sum as defined in \cite[II, Sec.~4]{Martinet77}.
Let $N/K$ be an abelian extension of $p$-adic fields and put $G := \gal(N/K)$. We set
\[
\tau_{N/K} := \left( \tau_{\Q_p}\left(i_K^\Qp\chi \right) \right)_{\chi \in \Irr_{\Q^c}(G)} \in \prod_\chi \left( \Q^c \right)^\times = \Q^c[G]^\times.
\]
Let $k \colon \Q^c \lra \Qpc$ be any embedding and also write $k \colon \Q^c[G] \lra \Qpc[G]$ for the induced map.
Then $T_{N/K} \in  \Qpc[G]^\times / \ZpG^\times$ is defined to be the class represented by $k(\tau_{N/K})$. By \cite[Lemma 2.2]{Breuning04} 
the definition  $T_{N/K}$ does not depend on the choice of the embedding $k$. 

We call $C_{N/K}$ the \emph{cohomological term}. Let $\calL$ be a full projective $\ZpG$-sublattice of $\ON$ which is contained in
a sufficiently high power of the maximal ideal such that the exponential map of $N$ is defined on $\calL$. We recall that in
\cite[Sec.~3.3]{BlBu03} a cohomological term $E(X) \in K_0(\Z[G], \Q)$ is defined for every cohomologically trivial
$\Z[G]$-submodule $X$ of finite index in $U_N$. We write $E(X)_p \in K_0(\Z_p[G], \Q_p)$ for its $p$-part. Then,
by \cite[Prop.~2.6]{Breuning04},
\[
C_{N/K}=E(\exp(\mathcal L))_p-[\mathcal L,\rho_N,H_N]
\]
in $K_0(\Z_p[G], \Q^c_p)$. The computation of $E(\exp(\calL))_p$ in our special situation is the technical heart of this paper.
We therefore postpone its definition to Section \ref{definition of EX}. For the definition of 
$[\mathcal L,\rho_N,H_N]$ we just recall that for a normal basis element $\theta \in \ON$ and $\calL := \OKG\theta$ the element 
$[\mathcal L,\rho_N,H_N]$ is represented by 
$\left( \delta_K \calN_{K/\Qp}(\theta \mid \chi) \right)_{\chi \in \Irr_{\Q^c}(G)} \in \prod_\chi \left( \Q^c \right)^\times$
where $\calN_{K/\Qp}(\theta \mid \chi)$ denotes the norm resolvent and $\delta_K$ is a root of the discriminant of $K$ (cf.
\cite[Lemma 2.7]{Breuning04}).

We continue to describe the \emph{correction term} $M_{N/K}$. To simplify the notation we assume that $G$ is abelian.
For $x \in \QpG$ we define an invertible element ${}^*x \in \QpG^\times$ as follows.
If $\QpG = \prod F_i$ is the Wedderburn decomposition of $\QpG$ and $x = (x_i)$ under this decomposition, then ${}^*x = \left( ({}^*x)_i \right)$
with  $({}^*x)_i = x_i$ if $x_i \ne 0$ and $({}^*x)_i = 1$ if $x_i = 0$. Let $I$ be the ramification group of $G$ and let
$\sigma \in G$ be a lift of the Frobenius automorphism in $G/I$. Put $q := | \OK / \frp_K |$. Then $M_{N/K} \in K_0(\ZpG, \Q_p)$ is represented by
\[
m_{N/K} := \frac{{}^*(|G/I|e_G){}^*((1-\sigma q^{-1})e_I)}{{}^*((1-\sigma^{-1})e_I)}.
\]

Finally we discuss the \emph{unramified term} $U_{N/K}$. We write $\Opt$ for the ring of integers in the maximal tamely ramified
extension of $\Qp$ in $\Qpc$. Let $\iota \colon K_0(\ZpG, \Qpc) \lra K_0(\Opt[G], \Qpc)$ be the natural scalar extension map.
We recall that by Taylor's fixed point theorem the restriction of $\iota$ to the subgroup $K_0(\ZpG, \Q_p)$ is injective. If $G$ is abelian,
this injectivity is equivalent to
\[
\QpG^\times / \ZpG^\times \hookrightarrow \Qpc[G]^\times / \Opt[G]^\times.
\]
By \cite[Prop.~2.12]{Breuning04} we have $\iota(U_{N/K}) = 0$. The properties of $U_{N/K}$ with respect to the action
of $\gal(\Qpc / \Q_p)$ ensure that $R_{N/K} \in K_0(\ZpG, \Q_p)$. By Taylor's fixed point theorem it therefore suffices to show that $\iota(R_{N/K}) = 0$
(cf. \cite[Cor.~3.5]{Breuning04}). In the abelian case it therefore suffices to prove that a representative of
$T_{N/K} + C_{N/K} -  M_{N/K}$ actually lies in $\Opt[G]^\times$.

\end{subsection}

\begin{subsection}{Definition of $E(X)$}\label{definition of EX}

Let $N/K$ be a finite Galois extension of $p$-adic fields with group $G$.
Let $X \sseq U_N$ be any cohomologically trivial $\ZG$-submodule of finite index. The element $E(X) \in K_0(\ZG, \Q)$ is defined in
\cite[(19)]{BlBu03}. We recall here the approach summarized in \cite[Lemma 3.7]{BlBu03} which allows an explicit description of $E(X)_p$. This
approach is based on the observation of Burns and Flach made in \cite[Prop.~3.5 (a)]{BurnsFlach98} that relates certain complexes arising from the
cohomology of the sheaf $\mathbb{G}_m$ to $2$-extensions representing the fundamental class of local class field theory.

We fix a $\ZG$-equivariant resolution of $\Z$ of the form
\begin{equation*}\label{resol}
 0 \lra \Sigma \stackrel{\subset}\lra \ZG^r \stackrel{d_2}\lra \ZG \stackrel{d_1}\lra \Z \lra 0 
\end{equation*} 
where $\Sigma := \ker(d_2)$ and compute groups of the form
$\Ext_\ZG^2(\Z , -)$ with respect to this resolution.
We then choose a morphism $\varphi \in \Hom_\ZG(\Sigma, N^\times / X )$ which
represents the image of the local fundamental class under the canonical
isomorphism  $\Ext_\ZG^2(\Z, N^\times) \lra \Ext_\ZG^2(\Z, N^\times / X )$. Without loss of generality we may assume that $\varphi$ is
surjective. We then set $B := \ker(d_1)$ and $\Kappa :=
\ker(\varphi)$ and we write $i_1, i_2$ and $i_3$ for the inclusion
morphisms
 $\Kappa_\Q \stackrel{\subset}\lra \Sigma_\Q$, $\Sigma_\Q \stackrel{\subset}\lra
  \Q [G]^r$ and $B_\Q \stackrel{\subset}\lra \Q [G]$ respectively.
   We also choose $\Q [G]$-equivariant sections $\rho, \sigma$ and $\tau$ to the
morphisms $\varphi_\Q, d_{2,\Q}: \Q [G]^r \lra B_\Q$ and
 $d_{1,\Q}$ respectively. We then write $\tilde\theta$ for the composite
isomorphism
\begin{eqnarray}\label{thetatilde}
(\Kappa \oplus \ZG)_\Q &\stackrel{(\mathrm{id},
(\tau,i_3)^{-1})}\longrightarrow& \Kappa_\Q \oplus (\Q \oplus
B_\Q) \nonumber
\\ &\stackrel{(\mathrm{id},\nu_N^{-1},\mathrm{id})}\lra& \Kappa_\Q \oplus \left(N^\times/X\right)_\Q\oplus B_\Q
 \nonumber \\ &\stackrel{(i_1 ,\rho, \mathrm{id})}\lra&
\Sigma_\Q \oplus B_\Q \label{theta tilde} \\ &\stackrel{(i_2,
\sigma)}\lra& \Q[G]^r. \nonumber
\end{eqnarray}
By \cite[Lemma 3.7]{BlBu03} the module $\Kappa$ is finitely generated and  $\ZG$-projective and, moreover, $$E(X) = [\Kappa \oplus \ZG,
\tilde\theta, \ZG^r]$$ in $K_0(\ZG, \Q)$ .

Suppose now that $G$ is abelian. In order to compute a representative of $E(X)_p$ in $\QpG^\times / \ZpG^\times \simeq K_0(\ZpG, \Q_p)$ 
we first note that $\Kappa_p \oplus \ZpG$ is $\ZpG$-free. We choose $\ZpG$-bases of $\Kappa_p \oplus \ZpG$ and $\ZpG^r$, respectively, 
and let $A_{\tilde\theta} \in \mathrm{Gl}_r(\QpG)$ denote the matrix which represents $\tilde\theta$ with respect to this choice of bases.
Then $E(X)_p$ is represented by $\det(A_{\tilde\theta})$.

\end{subsection}

\begin{subsection}{Plan of the manuscript}\label{plan}

In Section \ref{heart} we will compute the term $E(\exp(\calL))_p$ for extensions $N/K$ as in Theorem \ref{main theorem} and a special choice of
lattice $\calL$. The term $[\calL, \rho_N, H_N] - T_{N/K}$ is represented by the quotient of a norm resolvent by Galois Gau\ss\ sums.
In Section \ref{section_normresolvent_gauss} we will use the main result of \cite{PickettVinatier} to quickly
compute this term. Finally in Section \ref{section_the_proof} we calculate $M_{N/K}$ and complete the proof of Theorem \ref{main theorem} by showing 
that a representative of $T_{N/K} + C_{N/K} -  M_{N/K}$ lies in $\Opt[G]^\times$.
 \end{subsection}

\end{section}

\begin{section}{The setting}\label{section setting 1}
\begin{subsection}{Definitions and notation}\label{subsection def und not}
In this section we fix the setting in which we will work. We will consider local field extensions as follows.
\[\xymatrix{&&N_0\ar@{-}[dr]\ar@{-}[dl]\\&N\ar@{-}[dr]\ar@{-}[dl]&&\Kur\ar@{-}[dl]\\M\ar@{-}[dr]&&K'\ar@{-}[dl]\ar@{-}[dr]\\&K\ar@{-}[dr]&&\tilde K'\ar@{-}[dl]\\&&\Q_p.}\]
Here $K/\Q_p$ is the unramified extension of degree $m$ and $K'/K$ is the unramified extension of degree $d$.
We assume throughout that $(m,d)=1$. Furthermore, $M/K$ is a weakly and wildly ramified cyclic extension of degree $p$. 
Since $(m,d)=1$, there exists $\tilde K'/\Q_p$ of degree $d$ such that $K'=K\tilde K'$.
Further we set $N=MK'$, $\Kur$ the maximal unramified extension of $K$ and $N_0=\Kur N$. 
We will prove Conjecture \ref{local eps conj} for the extension  $N/K$. 

Let $F\in\gal(N_0/M)\cong \gal(\Kur/K)$ be the Frobenius automorphism, let $F_0=F^d\in\gal(N_0/N)\cong \gal(\Kur/K')$ and put $q=p^m$. We consider elements $a,b\in \gal(N_0/K)$ such that $\gal(M/K)=\langle a|_M\rangle$, $a|_{\Kur}=1$, $b|_M=1$ and $b|_{\Kur}=F^{-1}$. Since there will be no ambiguity, we will denote by the same letters $a,b$ their restrictions to $N$. Then $\gal(N/K) = \langle a,b \rangle$ and $\mathrm{ord}(a)=p, \mathrm{ord}(b) = d$.

\begin{lemma}\label{defnA}
Let $L/k$ be a finite tamely ramified Galois extension of $p$-adic fields. Then there exists a normal integral basis generator of trace one. 
\end{lemma}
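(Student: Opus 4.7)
The plan is to first produce any normal integral basis generator via Noether's theorem and then to rescale it by a unit of $\mathcal{O}_k$ so that its trace becomes~$1$.

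First, since $L/k$ is a tamely ramified Galois extension of local fields, Noether's theorem on tame extensions yields the existence of some $\theta_0 \in \mathcal{O}_L$ with $\mathcal{O}_L = \mathcal{O}_k[G]\theta_0$. Thus a normal integral basis generator exists; the only remaining issue is to arrange its trace to be~$1$.

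Next, I recall that for a tamely ramified extension of local fields the trace map $\mathcal{T}_{L/k}\colon \mathcal{O}_L \to \mathcal{O}_k$ is surjective (a classical consequence of the equality $\mathfrak{D}_{L/k} = \mathfrak{p}_L^{e-1}$ for the different). Hence there exists $y \in \mathcal{O}_L$ with $\mathcal{T}_{L/k}(y) = 1$. Writing $y = u\theta_0$ for some $u = \sum_{g \in G} c_g\, g \in \mathcal{O}_k[G]$ and using that $\mathcal{T}_{L/k}(g\theta_0) = \mathcal{T}_{L/k}(\theta_0)$ for every $g \in G$, I obtain
\[
1 \;=\; \mathcal{T}_{L/k}(u\theta_0) \;=\; \Bigl(\sum_{g \in G} c_g\Bigr) \mathcal{T}_{L/k}(\theta_0) \;=\; \epsilon(u)\,\mathcal{T}_{L/k}(\theta_0),
\]
where $\epsilon$ denotes the augmentation. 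In particular $\mathcal{T}_{L/k}(\theta_0) \in \mathcal{O}_k^\times$, i.e.\ every normal integral basis generator automatically has unit trace.

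Finally, I would set $\theta := \mathcal{T}_{L/k}(\theta_0)^{-1}\theta_0$. Since the scaling factor lies in $\mathcal{O}_k^\times \subseteq \mathcal{O}_k[G]^\times$, the element $\theta$ is still a normal integral basis generator of $\mathcal{O}_L$, and the $\mathcal{O}_k$-linearity of the trace gives $\mathcal{T}_{L/k}(\theta) = 1$, as required. There is no real obstacle in this argument: the two ingredients, Noether's theorem and surjectivity of the trace in the tame case, are classical, and the remainder is a one-line rescaling.
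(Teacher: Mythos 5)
Your proof is correct and follows essentially the same route as the paper: apply Noether's theorem and rescale by the trace, which is a unit. You additionally justify why $\mathcal{T}_{L/k}(\theta_0)\in\mathcal{O}_k^\times$ (via surjectivity of the trace in the tame case and the augmentation computation), a fact the paper simply asserts.
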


\begin{proof}
Put $\Delta := \gal(L/k)$. By Noether's Theorem there exists an element $\theta \in \OL$ such that $\OL = \calO_k[\Delta]\theta$. Let $t := \calT_{L/k}(\theta)$.
Then $t \in \calO_k^\times$ and $\frac{\theta}{t}$ is an integral normal basis generator of trace one.
\end{proof}

Let us call $A$ such an element for the extension $K/\Q_p$ and let $\theta_2$ be such an element for the extension $\tilde K'/\Q_p$.

\begin{lemma}\label{defntheta1}
There exists an element $\theta_1\in \p_M$ such that $\oo_K[\gal(M/K)]\theta_1=\p_M$ and we can assume that $\Tr_{M/K}\theta_1=p$.
\end{lemma}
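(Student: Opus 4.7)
The plan is to show that any uniformizer $\pi$ of $M$ already generates $\p_M$ as an $\OK[\gal(M/K)]$-module, and then to rescale by a unit of $\OK$ so that the trace equals exactly $p$. Write $G := \gal(M/K) = \langle\sigma\rangle$, cyclic of order $p$, and set $k := \OK/\p_K$.

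For the generator claim I would invoke Nakayama's lemma. Since $G$ is a $p$-group and $\OK$ has residue characteristic $p$, the ring $\OK[G]$ is local with maximal ideal generated by $\p_K$ and $\sigma-1$; it therefore suffices to show that $\pi,\sigma\pi,\ldots,\sigma^{p-1}\pi$ are $k$-linearly independent in $\p_M/\p_K\p_M$. Because $M/K$ is totally ramified of degree $p$ one has $\p_K\p_M = \p_M^{p+1}$, and the quotient has $k$-dimension $p$ with basis $\pi,\pi^2,\ldots,\pi^p$, so a dimension count reduces the problem to showing independence; equivalently, I would prove that $\pi,(\sigma-1)\pi,\ldots,(\sigma-1)^{p-1}\pi$ have pairwise distinct $M$-valuations $1,2,\ldots,p$. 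The step that I expect to be the main obstacle is precisely this valuation calculation, which uses the weakly ramified hypothesis $G_2=1$ in the form $v_M(\sigma\pi-\pi)=2$: starting from $\sigma\pi = \pi + a\pi^2 + O(\pi^3)$ with $a \in \mathcal{O}_M^\times$, a careful induction should show the valuation grows by exactly one at each further application of $\sigma-1$, the key point being that the integer coefficients $1,2,\ldots,p-1$ arising in the expansion are invertible modulo $p$ and hence the leading term never collapses.

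Once $\p_M = \OK[G]\pi$ is established, the trace normalization is straightforward. From $|G_0|=|G_1|=p$ and $|G_i|=1$ for $i\ge 2$ we have $v_M(\mathfrak{D}_{M/K}) = 2(p-1)$, and the standard relation between trace and different yields $\Tr_{M/K}(\p_M) = \p_K^{\lfloor (1+2(p-1))/p\rfloor} = \p_K$. Since $\Tr_{M/K}$ is $\OK$-linear and $G$-invariant, $\Tr_{M/K}(\OK[G]\pi) = \OK\cdot\Tr_{M/K}(\pi)$, so $v_K(\Tr_{M/K}(\pi))=1$. Because $K/\Q_p$ is unramified, $p$ is a uniformizer of $\OK$, hence $\Tr_{M/K}(\pi) = up$ for some $u \in \OK^\times$. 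Setting $\theta_1 := u^{-1}\pi$ preserves the $\OK[G]$-generator property and gives $\Tr_{M/K}(\theta_1) = p$.
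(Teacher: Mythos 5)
Your argument is correct, but it takes a genuinely different route from the paper's. The paper's proof is essentially a citation: it invokes K\"ock's theorem on weakly ramified covers \cite[Th.~1.1 and Lemma 1.4 (b)]{Koeck} to obtain a generator $\tilde\theta_1$ of $\p_M$ over $\OK[\gal(M/K)]$ with $\Tr_{M/K}(\tilde\theta_1)=up$, $u\in\OK^\times$, and then rescales by $u^{-1}$ exactly as you do in your last step. You instead reprove the freeness from scratch and in a stronger form (\emph{any} uniformizer generates): Nakayama over the local ring $\OK[\gal(M/K)]$ reduces everything to the valuation statement $v_M((\sigma-1)^j\pi)=j+1$ for $0\le j\le p-1$, which you correctly identify as the crux; it does follow from $\sigma\in G_1\setminus G_2$ by the induction you sketch, since the leading term of $(\sigma-1)^j\pi$ is $j!\,a^j\pi^{j+1}$ and $j!$ is prime to $p$ for $j\le p-1$ (one also uses that $\sigma\in G_1$ acts trivially on $\calO_M/\p_M^2$, so the unit coefficients are undisturbed to the relevant order). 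This valuation fact is in effect Lemma \ref{lemma1Bley} of the paper, proved there for $N/K'$ by a slightly different device (Serre, Ch.~IV.1, Lemma 1 together with the substitution $x\mapsto x^sp^t$). Your trace normalization, via $v_M(\mathfrak{D}_{M/K})=2(p-1)$ and $\Tr_{M/K}(\p_M)=\p_K=p\,\OK$, is sound and matches the paper's. The payoff of your approach is a self-contained and slightly stronger statement; the cost is redoing, in the degree-$p$ case, work that K\"ock's theorem packages for general weakly ramified extensions.
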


\begin{proof}
By \cite[Th.~1.1 and Lemma 1.4 (b)]{Koeck} there exists an element $\tilde\theta_1 \in \p_M$ such that 
$\OK[\gal(M/K)]\tilde\theta_1 = \p_M$ and $\Tr_{M/K}(\tilde\theta_1) = up$
with a unit $u \in \calO_K^\times$. We set $\theta_1 := \frac 1 u \tilde\theta_1$. 
\end{proof}

For the rest of the paper we fix an element $\theta_1 \in \p_M$ as in Lemma \ref{defntheta1}. 
Since $a\in G_1\setminus G_2$, where $G_i$ is the $i$-th ramification group of $G=\gal(N/K)$, 
we know by \cite[Sec.~IV.2, Prop.~5]{SerreLocalFields} that $\theta_1^{a-1}\equiv 1-\alpha_1\theta_1\pmod{\p_M^2}$ for some unit $\alpha_1\in\oo_M^\times$. Since $\alpha_1$ can be replaced by any element in the same residue class in $\oo_M/\p_M=\oo_K/\p_K$, we can assume that $\alpha_1\in\oo_K^\times$.

By our choice of $A$, we know that $A,A^{f},\dots A^{f^{m-1}}$ is a basis of $\oo_K$ over $\Z_p$, where $f$ denotes the Frobenius automorphism of $\Kur/\Q_p$. 
Since $1=\Tr_{K/\Q_p}A=\sum_{i=0}^{m-1}A^{f^i}$ and $\alpha_1\in\oo_K^\times$ it easily follows that also 
\begin{equation}\label{choice of alpha}
\alpha_1,\alpha_2=\alpha_1A,\alpha_3=\alpha_1A^{f},\dots, \alpha_m=\alpha_1A^{f^{m-2}}
\end{equation}
constitute a basis of $\oo_K$ over $\Z_p$. In particular, we have the equality $A=\frac{\alpha_2}{\alpha_1}$.

\begin{lemma}\label{xp-x+Atheta2}
With the above notation, $X^p-X+A\theta_2$ divides $X^{q^d}-X+1$ in $\oo_{K'}/\p_{K'}[X]$.
\end{lemma}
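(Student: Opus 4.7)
The plan is to pass to the residue field $\mathcal{O}_{K'}/\mathfrak{p}_{K'} = \mathbb{F}_{q^d}$, pick a root of $X^p - X + \overline{A\theta_2}$ in an algebraic closure, and check directly that it is also a root of $X^{q^d} - X + 1$. Since both polynomials have derivative $-1$ and are therefore separable, this root-theoretic check is enough to conclude divisibility.

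More precisely, let $\beta \in \overline{\mathbb{F}}_p$ satisfy $\beta^p = \beta - \overline{A\theta_2}$. I would iterate the Frobenius: a straightforward induction gives
\[
\beta^{p^k} \;=\; \beta \;-\; \sum_{i=0}^{k-1} \overline{A\theta_2}^{\,p^i}
\]
for every $k\ge 1$. Taking $k = md$, so that $p^k = q^d$, I get
\[
\beta^{q^d} \;=\; \beta \;-\; \mathcal{T}_{\mathbb{F}_{q^d}/\mathbb{F}_p}\!\bigl(\overline{A\theta_2}\bigr).
\]
Since $K'/\mathbb{Q}_p$ is unramified, reduction mod $\mathfrak{p}_{K'}$ intertwines the trace $\mathcal{T}_{K'/\mathbb{Q}_p}$ with $\mathcal{T}_{\mathbb{F}_{q^d}/\mathbb{F}_p}$, so it suffices to prove $\mathcal{T}_{K'/\mathbb{Q}_p}(A\theta_2) = 1$.

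For this step I would use transitivity of the trace through the tower $\mathbb{Q}_p \subset K \subset K'$:
\[
\mathcal{T}_{K'/\mathbb{Q}_p}(A\theta_2) \;=\; \mathcal{T}_{K/\mathbb{Q}_p}\!\bigl(A\cdot \mathcal{T}_{K'/K}(\theta_2)\bigr),
\]
using that $A \in K$. The hypothesis $(m,d)=1$ forces $K \cap \tilde K' = \mathbb{Q}_p$, so the natural restriction $\mathrm{Gal}(K'/K)\to\mathrm{Gal}(\tilde K'/\mathbb{Q}_p)$ is an isomorphism, and summing the $\mathrm{Gal}(K'/K)$-conjugates of $\theta_2 \in \tilde K'$ recovers $\mathcal{T}_{\tilde K'/\mathbb{Q}_p}(\theta_2) = 1$ by the defining property of $\theta_2$ (Lemma~\ref{defnA}). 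Hence $\mathcal{T}_{K'/K}(\theta_2)=1$, and then $\mathcal{T}_{K/\mathbb{Q}_p}(A) = 1$ (again by Lemma~\ref{defnA}) yields $\mathcal{T}_{K'/\mathbb{Q}_p}(A\theta_2) = 1$, as desired.

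There is no real obstacle here: the argument is essentially the Artin--Schreier trick combined with a trace transitivity computation. The only point that requires a moment of care is the reduction that $\mathcal{T}_{K'/K}(\theta_2) = \mathcal{T}_{\tilde K'/\mathbb{Q}_p}(\theta_2)$, which relies on the linear disjointness of $K$ and $\tilde K'$ over $\mathbb{Q}_p$ guaranteed by $\gcd(m,d)=1$. Once $\mathcal{T}_{K'/\mathbb{Q}_p}(A\theta_2) = 1$ is established, $\beta^{q^d} - \beta + 1 = 0$ follows, so every root of $X^p - X + \overline{A\theta_2}$ is a root of $X^{q^d}-X+1$; separability of the former gives the divisibility in $\mathbb{F}_{q^d}[X] = (\mathcal{O}_{K'}/\mathfrak{p}_{K'})[X]$.
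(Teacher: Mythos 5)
Your proof is correct and rests on exactly the same computation as the paper's: iterating the Frobenius on a root of $X^p-X+A\theta_2$ and the fact that $\mathcal{T}_{K'/\Q_p}(A\theta_2)=1$ (which you verify more explicitly than the paper does). The paper packages the conclusion slightly more directly, observing the telescoping congruence $X^{q^d}-X+1\equiv\sum_{i=0}^{md-1}(X^p-X+A\theta_2)^{p^i}\pmod{\p_{K'}}$, which exhibits the divisibility outright and makes your appeal to separability unnecessary, but this is only a cosmetic difference.
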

\begin{proof}
We have
\[X^{q^d}-X+1= X^{p^{md}}-X+\Tr_{K'/\Q_p}(A\theta_2)\equiv\sum_{i=0}^{md-1}(X^p-X+A\theta_2)^{p^i}\pmod{\p_{K'}}\]
and the right hand side is clearly a multiple of $X^p-X+A\theta_2$.
\end{proof}

Now we choose an element $x_2\in\oo_{\Kur}$ so that the class of $\frac{x_2}{\alpha_1}$ modulo $\p_{\Kur}$ is a root of the polynomial $X^p-X+A\theta_2$.
Let $\zeta_{q^d-1} \in \calO_{K'}^\times$ be a primitive $(q^d-1)$-th root of 
unity.

\begin{lemma}\label{F0-1surjectivemodp2}
We have
\[(\zeta_{q^d-1}(1+x_2\theta_1))^{F_0-1}\equiv\theta_1^{a-1}\equiv 1-\alpha_1\theta_1\pmod{\p_{N_0}^2}.\]
\end{lemma}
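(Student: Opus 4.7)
The plan is to unwind the left-hand side, reduce modulo $\p_{N_0}^2$, and identify the result using the Artin-Schreier equation that defines $x_2$.

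First, since $\zeta_{q^d-1}$ lies in $K' \subset N$ (by Hensel's lemma, as the residue field of $K'$ has $q^d$ elements) and $\theta_1 \in M \subset N$, both are fixed by $F_0 \in \gal(N_0/N)$. Thus the left-hand side equals
\[
(1 + x_2\theta_1)^{F_0 - 1} \;=\; \frac{1 + x_2^{F_0}\theta_1}{1 + x_2\theta_1} \;\equiv\; 1 + (x_2^{F_0} - x_2)\theta_1 \pmod{\p_{N_0}^2},
\]
using $\theta_1 \in \p_{N_0}$ and the expansion $(1+x_2\theta_1)^{-1} \equiv 1 - x_2\theta_1 \pmod{\p_{N_0}^2}$.

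Second, I would compute $x_2^{F_0} - x_2$ modulo $\p_{\Kur}$ from the Artin-Schreier relation. Writing $\bar y$ for the image of $y := x_2/\alpha_1$ in the residue field of $\Kur$, the construction of $x_2$ gives $\bar y^{\,p} = \bar y - \overline{A\theta_2}$. Since $F_0$ acts on this residue field as the $q^d = p^{md}$-power map, iterating the relation and using the additivity of $p$-th powers in characteristic $p$ yields
\[
\bar y^{\,F_0} \;=\; \bar y - \sum_{i=0}^{md-1} \overline{(A\theta_2)}^{\,p^i} \;=\; \bar y - \overline{\Tr_{K'/\Q_p}(A\theta_2)}.
\]
By the linear disjointness of $K$ and $\tilde K'$ over $\Q_p$ together with the tower formula and our normalisations $\Tr_{K/\Q_p}(A) = 1$, $\Tr_{\tilde K'/\Q_p}(\theta_2) = 1$, one obtains $\Tr_{K'/\Q_p}(A\theta_2) = 1$. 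Multiplying the resulting congruence by $\alpha_1 \in \calO_K^\times$ (which $F_0$ fixes) gives $x_2^{F_0} - x_2 \equiv -\alpha_1 \pmod{\p_{\Kur}}$.

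Finally, since $M/K$ is totally ramified and $\Kur/K$ is unramified, $M$ and $\Kur$ are linearly disjoint over $K$ and $N_0 = \Kur M$ is unramified over $M$; in particular $\p_M\calO_{N_0} = \p_{N_0}$, hence $\p_{\Kur}\p_M \subseteq \p_{N_0}^2$. Multiplying the congruence from the previous step by $\theta_1 \in \p_M$ and substituting into the first step yields the first claimed congruence. The second equality is just the defining relation of $\alpha_1$ recalled before \eqref{choice of alpha}, lifted from $\p_M^2$ to $\p_{N_0}^2$ by the same unramifiedness of $N_0/M$. The only step that requires genuine care is the Artin-Schreier iteration and the resulting trace identification; the remaining manipulations are bookkeeping with the unramified extension $N_0/M$.
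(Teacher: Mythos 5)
Your proof is correct and follows essentially the same route as the paper: expand $(1+x_2\theta_1)^{F_0-1}$ modulo $\p_{N_0}^2$ and reduce to the congruence $x_2^{F_0}-x_2\equiv-\alpha_1\pmod{\p_{\Kur}}$, which comes from the Artin--Schreier relation defining $x_2$ together with $\Tr_{K'/\Q_p}(A\theta_2)=1$. The only cosmetic difference is that you re-derive the content of Lemma~\ref{xp-x+Atheta2} inline (iterating the relation on residues and recognizing the trace) rather than citing the polynomial divisibility statement.
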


\begin{proof}
By the choice of $x_2$ and Lemma \ref{xp-x+Atheta2} we obtain 
\[
\left(\frac{x_2}{\alpha_1}\right)^{q^d}-\frac{x_2}{\alpha_1}\equiv-1\pmod{\p_{\Kur}}.
\]
Multiplying by $\alpha_1^{q^d}$ and observing $\alpha_1^{q^d - 1} \equiv 1 \pmod{\p_{\Kur}}$ we obtain
\begin{equation}
  \label{eq:1000}
  x_2^{q^d}-x_2\equiv-\alpha_1\pmod{\p_{N_0}}.
\end{equation}
Now we conclude
\[
\begin{split}\left(\zeta_{q^d-1}(1+x_2\theta_1)\right)^{F_0-1}
&=(1+x_2\theta_1)^{b^{-d}}(1+x_2\theta_1)^{-1}
\\&\equiv(1+x_2^{b^{-d}}\theta_1)(1-x_2\theta_1)\pmod{\p_{N_0}^2}\\
&\equiv 1+x_2^{b^{-d}}\theta_1-x_2\theta_1\equiv 1-\alpha_1\theta_1\pmod{\p_{N_0}^2},
\end{split}\]
where the last congruence follows from (\ref{eq:1000}).
\end{proof}

Let $\hat N_0$ denote the completion of $N_0$.

\begin{lemma}\label{F0-1surjective}
For all $u\in U_{\hat N_0}$  there exists $z\in U_{\hat N_0}$ such that $z^{F_0-1}=u$. 
In particular, there exists $\gamma\in U_{N_0}$ such that $\gamma^{F_0-1}\equiv \theta_1^{a-1}\pmod{\p_{N_0}^{p+1}}$ and the element $\gamma$ can be chosen so that $\gamma\equiv\zeta_{q^d-1}(1+x_2\theta_1)\pmod{\p_{N_0}^2}$.
\end{lemma}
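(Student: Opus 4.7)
I would prove the two assertions separately: first the abstract surjectivity of $F_0-1$ on $U_{\hat N_0}$, and then the refined construction of $\gamma\in U_{N_0}$ satisfying both congruence conditions.

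For the surjectivity, the residue field of $\hat N_0$ is $\overline{\F}_p$, on which $F_0$ acts as the $q^d$-th power Frobenius. The Teichmüller decomposition $U_{\hat N_0}=\mu\times U_{\hat N_0}^{(1)}$, with $\mu\cong\overline{\F}_p^\times$, reduces the problem to the two factors. On $\mu$ the operator $F_0-1$ is the map $\zeta\mapsto\zeta^{q^d-1}$, which is surjective because $\overline{\F}_p^\times$ is divisible. On each graded piece $U_{\hat N_0}^{(n)}/U_{\hat N_0}^{(n+1)}\cong\overline{\F}_p$, using that the uniformizer $\theta_1$ of $N_0$ lies in $N$ and is therefore fixed by $F_0$, the operator $F_0-1$ becomes the additive Artin--Schreier map $s\mapsto s^{q^d}-s$, which is surjective since $\overline{\F}_p$ is algebraically closed. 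A standard successive approximation inside the complete group $U_{\hat N_0}^{(1)}$ then produces $z\in U_{\hat N_0}$ with $z^{F_0-1}=u$.

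For the construction of $\gamma$, set $\gamma_2:=\zeta_{q^d-1}(1+x_2\theta_1)$, so that by Lemma \ref{F0-1surjectivemodp2} one has $\gamma_2^{F_0-1}\theta_1^{-(a-1)}\in 1+\p_{N_0}^2$. I would then lift the congruence one level at a time: if $\gamma_n\in U_{N_0}$ has been constructed so that $\gamma_n^{F_0-1}\theta_1^{-(a-1)}\in 1+\p_{N_0}^n$ for some $2\le n\le p$, write this unit as $1+\theta_1^n t_n$ with $t_n\in\oo_{N_0}$ and choose $s_n\in\oo_{N_0}$ whose residue class solves $\bar s_n^{q^d}-\bar s_n=\bar t_n$. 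Such an $s_n$ can be found in $N_0$ because every element of $\overline{\F}_p$ is the reduction of a Teichmüller representative in some finite unramified extension of $\Q_p$, which is contained in $\Kur\subseteq N_0$. Setting $\gamma_{n+1}:=\gamma_n(1+\theta_1^n s_n)^{-1}$ keeps $\gamma_{n+1}\in U_{N_0}$, improves the congruence to $\gamma_{n+1}^{F_0-1}\theta_1^{-(a-1)}\in 1+\p_{N_0}^{n+1}$, and leaves the mod-$\p_{N_0}^2$ congruence unchanged. After $p-1$ such steps one arrives at $\gamma:=\gamma_{p+1}$, which satisfies both required properties.

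The main subtlety, and the point I expect to be the principal obstacle, is the distinction between $N_0$ and its completion $\hat N_0$: the abstract surjectivity argument yields only $z\in U_{\hat N_0}$, whereas the lemma insists on $\gamma\in U_{N_0}$. This rationality is secured by the level-by-level construction together with the observation that $N_0=\Kur N$ contains every finite unramified extension of $\Q_p$, so that each Artin--Schreier correction $s_n$ can genuinely be chosen inside $N_0$ and not merely in its completion.
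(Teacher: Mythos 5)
Your proposal is correct and follows essentially the same route as the paper: the paper also proves the general surjectivity by successive approximation (quoting Neukirch V, Lemma 2.1 and then solving an Artin--Schreier equation $y^{F_0}-y\equiv\ast$ at each level, using that the residue field of $N_0$ is algebraically closed so each correction lies in $U_{N_0}$), and for the refined statement it likewise starts the iteration at $z_2=\zeta_{q^d-1}(1+x_2\theta_1)$ via Lemma \ref{F0-1surjectivemodp2} and stops at $\gamma=z_{p+1}$. Your splitting off of the Teichm\"uller factor is only a cosmetic variant of the paper's first approximation step.
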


\begin{proof}
The first part of the lemma is contained in \cite[Sec.~V, Lemma 2.1]{Neukirch92}. The second part follows from the constructions made in the
proof of loc.cit. combined with Lemma \ref{F0-1surjectivemodp2}. For the reader's convenience we carry out the details.

Since the residue field of $N_0$ is algebraically closed, there exists a solution $z_1\in U_{N_0}$ of $z^{F_0}\equiv z^{q^d}\equiv zu\pmod{\p_{N_0}}$.

Now let us assume that $i\geq2$ and that we have an element $z_{i-1} \in U_{N_0}$ such that $z_{i-1}^{F_0-1}\equiv u\pmod{\p_{N_0}^{i-1}}$. 
By assumption, $uz_{i-1}^{1-F_0}-1$ is a multiple of $\theta_1^{i-1}$. So we can find a solution $y_i\in\oo_{N_0}$ of 
\[
X^{F_0}-X-\frac{uz_{i-1}^{1-F_0}-1}{\theta_1^{i-1}} \equiv 0 \pmod{\p_{N_0}}.
\]
Multiplying by $\theta_1^{i-1}$, we get
\[
y_i^{F_0}\theta_1^{i-1}\equiv y_i\theta_1^{i-1}+uz_{i-1}^{1-F_0}-1\pmod{\p_{N_0}^i}.
\]
Now we set $z_i := z_{i-1}(1+y_i\theta_1^{i-1})$ and easily verify that $z_i^{F_0 - 1} \equiv u \pmod{\p_{N_0}^i}$. The $z_i$ form a Cauchy sequence 
which converges to an element $z \in U_{\hat N_0}$ with the requested properties. 

If $u=\theta_1^{a-1}$, then by Lemma \ref{F0-1surjectivemodp2} we can start the construction of the sequence of the $z_i$ 
from the element $z_2=\zeta_{q^d-1}(1+x_2\theta_1)$ and take $\gamma=z_{p+1}$.
\end{proof}

For the rest of the paper we fix an element $\gamma \in U_{N_0}$ as in Lemma \ref{F0-1surjective}.

\end{subsection}
\begin{subsection}{Some preliminary results}

In this subsection we collect some preliminary results which will be needed in Section \ref{heart}. We assume all the notations introduced
in Subsection \ref{subsection def und not}.

\begin{lemma}\label{nicenorm}
We have
\[\mathcal N_{M/K}(1-\alpha_1\theta_1)\equiv 1\pmod{\p_{M}^{p+1}}.\]
\end{lemma}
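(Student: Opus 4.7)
The strategy is to factor $1-\alpha_1\theta_1$ as a coboundary (whose norm is automatically $1$) times an element of the principal unit group $U_M^{(2)}$, and then to show that $\mathcal{N}_{M/K}$ sends $U_M^{(2)}$ into $U_K^{(2)}$.

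Since $\theta_1^{a-1}\equiv 1-\alpha_1\theta_1\pmod{\p_M^2}$, the quotient $w:=(1-\alpha_1\theta_1)\,\theta_1^{1-a}$ lies in $U_M^{(2)}$. Because $\mathcal{N}_{M/K}(\theta_1^{a-1})=\mathcal{N}_{M/K}(\theta_1)^a/\mathcal{N}_{M/K}(\theta_1)=1$, we get $\mathcal{N}_{M/K}(1-\alpha_1\theta_1)=\mathcal{N}_{M/K}(w)$. Since $\mathcal{N}_{M/K}(w)\in K$ and $\p_K^2=\p_M^{2p}\sseq\p_M^{p+1}$, the lemma reduces to proving that $\mathcal{N}_{M/K}(u)\equiv 1\pmod{\p_K^2}$ for every $u\in U_M^{(2)}$.

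The key input from weak ramification is the trace bound $\mathcal{T}_{M/K}(\p_M^2)\sseq\p_K^2$. This follows from the different exponent $v_M(\mathcal{D}_{M/K})=(|G_0|-1)+(|G_1|-1)=2(p-1)$ together with the standard codifferent identity $\mathcal{D}_{M/K}^{-1}=\{x\in M:\mathcal{T}_{M/K}(x\oo_M)\sseq\oo_K\}$, which yield $\mathcal{T}_{M/K}(\p_M^i)=\p_K^{\lfloor(i+2p-2)/p\rfloor}$; in particular $\mathcal{T}_{M/K}(\p_M^2)=\p_K^2$.

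Granted this, I would conclude using the $p$-adic logarithm. For $p$ odd and $M/K$ totally ramified of degree $p$, the convergence radius $p/(p-1)$ is below $2$, so $\log$ converges on $U_M^{(2)}$ and $\log(U_M^{(2)})\sseq\p_M^2$. Hence
\[
\log\mathcal{N}_{M/K}(u)=\mathcal{T}_{M/K}\log(u)\in\mathcal{T}_{M/K}(\p_M^2)\sseq\p_K^2,
\]
and since $\log\colon U_K^{(n)}\stackrel{\sim}{\lra}\p_K^n$ is an isomorphism for every $n\ge 1$ (using $K/\Q_p$ unramified and $p$ odd), this forces $\mathcal{N}_{M/K}(u)\in U_K^{(2)}$, as required. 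An elementary alternative avoiding $\exp$ would expand $\mathcal{N}_{M/K}(1+v)=1+\sum_{k=1}^p e_k(v)$ in elementary symmetric functions of the Galois conjugates of $v$: the cases $k=1$ (trace) and $k=p$ ($v_M\ge 2p$) are immediate, and for $2\le k\le p-1$ one inducts via Newton's identity $ke_k=\sum_{i=1}^k(-1)^{i-1}p_ie_{k-i}$, noting that $k$ is a $p$-adic unit and that each $p_i=\mathcal{T}_{M/K}(v^i)$ already lies in $\p_K^2$. The only quantitative step is the trace bound, which is precisely where weak ramification enters; once it is in place, the rest is formal.
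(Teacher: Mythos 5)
Your proof is correct and follows the same route as the paper: both factor $1-\alpha_1\theta_1$ as the coboundary $\theta_1^{a-1}$ (whose norm is $1$) times an element of $U_M^{(2)}$, and then reduce to the containment $\mathcal N_{M/K}(U_M^{(2)})\subseteq U_K^{(2)}\subseteq U_M^{(p+1)}$. The only difference is that the paper simply cites Serre (Local Fields, Ch.~V, \S 6, Prop.~8) for that containment, whereas you reprove it from the trace bound $\mathcal T_{M/K}(\p_M^2)\subseteq\p_K^2$ via the $p$-adic logarithm (or Newton's identities), which is a valid, self-contained substitute.
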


\begin{proof}
Recalling that by \cite[Sec.~V.6, Prop.~8]{SerreLocalFields} $\mathcal N_{M/K}U_{M}^{(2)}\subseteq U_{K}^{(2)}\subseteq U_{M}^{(p+1)}$, we obtain
\[
\mathcal N_{M/K}(1-\alpha_1\theta_1)\equiv \mathcal N_{M/K}(\theta_1^{a-1}) = 1 \pmod{\p_{M}^{p+1}}.
\]
\end{proof}

\begin{lemma}\label{normtrace}
We have
\[\mathcal N_{M/K}(\theta_1)\equiv -\alpha_1^{1-p}p\pmod{\p_M^{p+1}}.\]
\end{lemma}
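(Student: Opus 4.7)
The plan is to pin down $\mathcal N_{M/K}(\theta_1) \pmod{\p_M^{p+1}}$ by first identifying it with $\theta_1^p$ and then using the trace condition $\Tr_{M/K}(\theta_1) = p$ from Lemma \ref{defntheta1} to evaluate $\theta_1^p$. Since $\theta_1$ is a uniformizer of $M$, its minimal polynomial over $K$ is Eisenstein of the form $f(X) = X^p + c_{p-1}X^{p-1} + \cdots + c_0$ with $c_i \in \p_K$ and $c_0 = -\mathcal N_{M/K}(\theta_1)$ (using $p$ odd). The relation $f(\theta_1) = 0$ combined with $v_M(c_i\theta_1^i) \geq p+i \geq p+1$ for $i \geq 1$ yields at once
\[
\mathcal N_{M/K}(\theta_1) \equiv \theta_1^p \pmod{\p_M^{p+1}}.
\]

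To determine $\theta_1^p \pmod{\p_M^{p+1}}$, I introduce the $\calO_K$-linear operator $T(x) = x^a - x$ on $\calO_M$. Since $a \in G_1$ one checks $T(\p_M^i) \subseteq \p_M^{i+1}$, and the hypothesis $\theta_1^{a-1} \equiv 1 - \alpha_1\theta_1 \pmod{\p_M^2}$ refines to $T(\theta_1) \equiv -\alpha_1\theta_1^2 \pmod{\p_M^3}$. A short induction on $j$, whose inductive step rests on
\[
T(\theta_1^j) = (\theta_1 + T(\theta_1))^j - \theta_1^j \equiv j\theta_1^{j-1}T(\theta_1) \equiv -j\alpha_1\theta_1^{j+1} \pmod{\p_M^{j+2}},
\]
yields the clean formula $T^j(\theta_1) \equiv (-\alpha_1)^j j!\,\theta_1^{j+1} \pmod{\p_M^{j+2}}$ for $0 \leq j \leq p-1$. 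Then, writing $a = 1 + T$ and using the polynomial identity $\sum_{i=0}^{p-1}(1+T)^i = \sum_{k=0}^{p-1}\binom{p}{k+1}T^k$, one applies this operator to $\theta_1$. Every term except the last carries a factor of $p$ multiplied by something in $\p_M^{k+1}$ (so lies in $\p_M^{p+k+1}$), while $T^{p-1}(\theta_1) \in \p_M^p$, giving
\[
p = \Tr_{M/K}(\theta_1) \equiv T^{p-1}(\theta_1) \equiv (-\alpha_1)^{p-1}(p-1)!\,\theta_1^p \pmod{\p_M^{p+1}}.
\]

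Since $p$ is odd this simplifies to $\alpha_1^{p-1}(p-1)!\,\theta_1^p \equiv p$, and Wilson's theorem $(p-1)! \equiv -1 \pmod p$ (upgraded to $\tfrac{p}{(p-1)!} \equiv -p \pmod{\p_K^2}$, since $\p_K^2 \supseteq p^2\calO_K$) delivers $\theta_1^p \equiv -\alpha_1^{1-p}p \pmod{\p_M^{p+1}}$. Combined with the first displayed congruence this gives the claim. The only slightly delicate point is keeping the book-keeping in the induction tight enough: since we only control $T(\theta_1)$ to precision $\p_M^3$, we need $T^{p-1}$ to propagate that precision to $\p_M^{p+1}$ exactly, which works because the weakly ramified hypothesis ensures $T$ consistently raises $\p_M$-valuations by one.
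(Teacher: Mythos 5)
Your proof is correct, but it follows a genuinely different route from the paper's. The paper argues multiplicatively: it first shows (Lemma \ref{nicenorm}) that $\mathcal N_{M/K}(1-\alpha_1\theta_1)\equiv 1\pmod{\p_M^{p+1}}$, since $1-\alpha_1\theta_1\equiv\theta_1^{a-1}$ modulo $U_M^{(2)}$ and the norm crushes $U_M^{(2)}$ into $U_M^{(p+1)}$; it then invokes Serre's expansion $\mathcal N_{M/K}(1+x)\equiv 1+\mathcal N_{M/K}(x)+\Tr_{M/K}(x)$ and reads off $\mathcal N_{M/K}(\theta_1)$ from $\Tr_{M/K}(\theta_1)=p$ in one line. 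You instead work additively: you identify $\mathcal N_{M/K}(\theta_1)$ with $\theta_1^p$ modulo $\p_M^{p+1}$ via the Eisenstein minimal polynomial, and then evaluate $\theta_1^p$ by expanding $\Tr_{M/K}=\sum_i a^i$ in powers of $T=a-1$ and tracking valuations, which brings in Wilson's theorem at the end. Both arguments rest on the same two inputs ($\Tr_{M/K}(\theta_1)=p$ and $\theta_1^{a-1}\equiv 1-\alpha_1\theta_1\pmod{\p_M^2}$), but your version is self-contained where the paper leans on \cite[Sec.~V.3, Lemmas 4 and 5]{SerreLocalFields}, and it yields the slightly finer byproduct $\theta_1^p\equiv\mathcal N_{M/K}(\theta_1)\equiv-\alpha_1^{1-p}p\pmod{\p_M^{p+1}}$; the price is a longer bookkeeping induction. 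One cosmetic remark: the fact that $T$ raises valuations by one is a consequence of $a\in G_1$ (wildness), not of weak ramification; the hypothesis $a\notin G_2$ is what makes $\alpha_1$ a unit, which you use when dividing by $\alpha_1^{p-1}(p-1)!$ at the end.
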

\begin{proof}
By \cite[Sec.~V.3, Lemma 4]{SerreLocalFields} we have $\Tr_{M/K}(\p_M^2) = \p_K^2$. In addition,
by \cite[Sec.~V.3, Lemma 5]{SerreLocalFields} and Lemma \ref{nicenorm} we obtain
\[
1\equiv\mathcal N_{M/K}(1-\alpha_1\theta_1)\equiv 1+\mathcal N_{M/K}(-\alpha_1\theta_1)+\Tr_{M/K}(-\alpha_1\theta_1)\pmod{\p_M^{p+1}}.
\]
Since $\alpha_1 \in \calO_K^\times$ and $\Tr_{M/K}(\theta_1) = p$ the result easily follows.
\end{proof}

\begin{lemma}\label{Ta_p}
We have
\[p- T_a=(a-1)^{p-1}u,\]
for some unit $u$ of $\Z_p[a]$ such that the augmentation $\epsilon(u)=(p-1)!$.
\end{lemma}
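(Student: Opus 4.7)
The approach I would take is to exploit the Wedderburn-type decomposition of the cyclic group ring. Since $a$ has order $p$, the map $(\epsilon,\pi)\colon \Z_p[a]\hookrightarrow \Z_p\times \Z_p[\zeta_p]$, where $\pi$ sends $a\mapsto\zeta_p$ for a fixed primitive $p$-th root of unity, is injective, and its image is precisely the pullback over the common residue field $\F_p=\Z_p[\zeta_p]/(\zeta_p-1)$, i.e., pairs $(n,\xi)$ with $n\bmod p$ equal to $\xi\bmod(\zeta_p-1)$. The plan is to construct $u$ by specifying its two components.

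Under this embedding $p-T_a$ maps to $(0,p)$, since $\epsilon(T_a)=p$ and $T_a\mapsto 0$ in the cyclotomic factor, while $(a-1)^{p-1}$ maps to $(0,(\zeta_p-1)^{p-1})$. Because $\zeta_p-1$ is a uniformizer of $\Z_p[\zeta_p]$ with $v_{\zeta_p-1}(p)=p-1$, the quotient $u_1:=p/(\zeta_p-1)^{p-1}$ lies in $\Z_p[\zeta_p]^\times$. Writing $p=\prod_{i=1}^{p-1}(1-\zeta_p^i)$ and using $(1-\zeta_p^i)/(1-\zeta_p)=1+\zeta_p+\cdots+\zeta_p^{i-1}\equiv i\pmod{\zeta_p-1}$, I would then compute $u_1\equiv (p-1)!\pmod{\zeta_p-1}$. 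Hence the pair $((p-1)!,u_1)$ satisfies the pullback compatibility, and so defines a unique element $u\in\Z_p[a]$ with $\epsilon(u)=(p-1)!$ and $\pi(u)=u_1$.

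The identity $(a-1)^{p-1}u=p-T_a$ then follows by a componentwise check: the augmentation gives $0=0$ trivially, and in the cyclotomic factor $(\zeta_p-1)^{p-1}\cdot u_1=p$ by construction of $u_1$; injectivity of the embedding yields equality in $\Z_p[a]$. Finally, $\Z_p[a]$ is local with maximal ideal $(p,a-1)$ and residue field $\F_p$, so $u$ is a unit if and only if $\epsilon(u)$ is a $p$-adic unit; by Wilson's theorem $(p-1)!\equiv -1\not\equiv 0\pmod p$, so $u$ is a unit. The only delicate step is the cyclotomic calculation $u_1\equiv (p-1)!\pmod{\zeta_p-1}$, which is classical and presents no real obstacle.
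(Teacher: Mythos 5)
Your proof is correct, and it is essentially the paper's argument: the paper simply exhibits the explicit element $u=\prod_{i=1}^{p-1}\frac{a^i-1}{a-1}\in\Z_p[a]$, whose images under $\epsilon$ and $a\mapsto\zeta_p$ are exactly $(p-1)!$ and $p/(\zeta_p-1)^{p-1}$, i.e.\ the very element you construct via the fiber product $\Z_p[a]\cong\Z_p\times_{\F_p}\Z_p[\zeta_p]$. Your componentwise verification (including the congruence $u_1\equiv(p-1)!\pmod{\zeta_p-1}$ and the locality of $\Z_p[a]$) is precisely the check implicit in the paper's one-line proof.
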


\begin{proof}
One can take, for example, $u = \prod_{i=1}^{p-1} \frac{a^i - 1}{a - 1}$.
\end{proof}

\begin{lemma}\label{norm_power_a-1}
The element $(a-1)^{p-1}-T_a$ is a multiple of $p$ in $\Z_p[a]$. In particular,
\[(a-1)^{p-1}\theta_1\equiv T_a\theta_1 = p\pmod{\p_N^{p+1}}.\]
\end{lemma}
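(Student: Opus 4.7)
The plan is to prove the first statement---the divisibility $(a-1)^{p-1} - T_a \in p\Z_p[a]$---by a direct reduction modulo $p$ in the group ring, and then to derive the congruence for $\theta_1$ as an immediate consequence using $\Tr_{M/K}(\theta_1) = p$ together with the ramification index $e(M/K) = p$.

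For the first assertion, I would identify $\Z_p[a]$ with $\Z_p[X]/(X^p-1)$ and set $Y := X - 1$. Under this identification $T_a$ corresponds to $\sum_{i=0}^{p-1} X^i = \frac{X^p-1}{X-1} = \frac{(1+Y)^p - 1}{Y}$, which modulo $p$ collapses to $Y^{p-1} = (a-1)^{p-1}$ by Frobenius on $\F_p[Y]$. Alternatively, one can deduce the divisibility formally from Lemma \ref{Ta_p}: writing $p - T_a = (a-1)^{p-1} u$ with $\epsilon(u) = (p-1)! \equiv -1 \pmod p$ by Wilson, the element $1+u$ lies in $p\Z_p[a] + (a-1)\Z_p[a]$, and combining this with the observation that $(a-1)^p \equiv a^p - 1 = 0$ modulo $p\Z_p[a]$ yields the claim.

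For the second statement, write $(a-1)^{p-1} - T_a = pw$ with $w \in \Z_p[a]$. The action of $\Z_p[a]$ preserves $\p_M$, so $w\theta_1 \in \p_M$. Because $M/K$ is totally ramified of degree $p$ we have $p \in \p_K = \p_M^p$, whence $pw\theta_1 \in \p_M^{p+1}$. Since the ramification group of $N/K$ has order $p = [M:K]$, the extension $N/M$ is unramified, so $\p_M\oo_N = \p_N$ and the congruence persists modulo $\p_N^{p+1}$. The final equality $T_a\theta_1 = \Tr_{M/K}(\theta_1) = p$ is just Lemma \ref{defntheta1}.

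I do not anticipate any real obstacle: the content of the lemma is the elementary identity $T_a \equiv (a-1)^{p-1} \pmod{p}$ in the group ring of a cyclic group of order $p$, and the passage to the ideal-theoretic congruence is forced by the valuations $v_M(\theta_1) = 1$ and $v_M(p) = p$.
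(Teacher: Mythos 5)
Your proof is correct, and since the paper disposes of this lemma with ``Easy exercise,'' your argument supplies exactly the intended elementary content: the identity $T_a\equiv(a-1)^{p-1}\pmod{p\Z_p[a]}$ via $\sum_{i=0}^{p-1}X^i=\sum_{k=1}^{p}\binom{p}{k}(X-1)^{k-1}\equiv(X-1)^{p-1}\pmod p$ in $\Z_p[X]/(X^p-1)$, followed by the valuation bookkeeping $p\,w\theta_1\in p\,\p_M=\p_M^{p+1}\subseteq\p_N^{p+1}$ (using that $M/K$ is totally ramified of degree $p$ and $N/M$ is unramified) and $T_a\theta_1=\Tr_{M/K}(\theta_1)=p$ from Lemma~\ref{defntheta1}.
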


\begin{proof}
Easy exercise.
\end{proof}

\begin{lemma}\label{lemma1Bley}
Let $x\in N^\times$ such that $v_N(x)\in\{1,2,\dots,p-1\}$. Then $v_N((a-1)x)=v_N(x)+1$.
\end{lemma}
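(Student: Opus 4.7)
The plan is to use the fact that $\theta_1$ is a uniformizer of $N$ (since $N/M$ is unramified, as $K'/K$ is unramified) and exploit the explicit congruence $\theta_1^{a-1} \equiv 1 - \alpha_1\theta_1 \pmod{\p_M^2}$ established in Subsection \ref{subsection def und not}. First I would write $x = \theta_1^k y$ with $k = v_N(x) \in \{1,\ldots,p-1\}$ and $y \in U_N$, and then expand
\[
(a-1)x \;=\; \theta_1^k \bigl[(\theta_1^{a-1})^k a(y) - y\bigr] \;=\; \theta_1^k (\theta_1^{a-1})^k (a(y)-y) \;+\; \theta_1^k \bigl((\theta_1^{a-1})^k - 1\bigr) y.
\]
The strategy is then to show that the right-hand summand has $v_N$-valuation exactly $k+1$ while the left-hand summand has $v_N$-valuation at least $k+2$, so that no cancellation can occur.

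For the right-hand summand, I would observe that $(\theta_1^{a-1})^k - 1 \equiv -k\alpha_1\theta_1 \pmod{\p_N^2}$. Since $1 \le k \le p-1$, the integer $k$ is a unit in $\calO_K$, and $\alpha_1 \in \calO_K^\times$; hence $(\theta_1^{a-1})^k - 1$ has valuation exactly $1$, and the full summand has valuation exactly $k+1$. This is the step which genuinely uses the hypothesis on the range of $v_N(x)$.

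The key remaining step --- and the only real obstacle --- is the claim that $v_N(a(y)-y) \ge 2$ for every $y \in U_N$. To prove it, I would decompose $y = \zeta \cdot u$ where $\zeta$ is the Teichm\"uller lift of the residue class of $y$ (fixed by $a$, since $a|_{K'} = 1$ and $N/K'$ is totally ramified, so $a$ acts trivially on the residue field of $N$) and $u \in U_N^{(1)}$. Writing $u - 1 = \theta_1^j v$ with $j \ge 1$ and $v \in \calO_N$, a direct computation gives
\[
a(y) - y \;=\; \zeta \theta_1^j \bigl[(a(v) - v) + \bigl((\theta_1^{a-1})^j - 1\bigr)a(v)\bigr],
\]
and each of the two inner terms lies in $\p_N$: the first because $a - 1$ maps $\calO_N$ into $\p_N$ (again by triviality of $a$ on the residue field), and the second because $\theta_1^{a-1} - 1 \in \p_N$ already. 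Hence $v_N(a(y) - y) \ge j + 1 \ge 2$, so the left-hand summand has valuation $\ge k+2$. Combining with the previous paragraph yields $v_N((a-1)x) = k+1$.
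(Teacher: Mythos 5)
Your proof is correct, but it takes a genuinely different route from the paper's. The paper reduces the claim to showing $x^{a-1}\in U_N^{(1)}\setminus U_N^{(2)}$: for $v_N(x)=1$ this is exactly Serre's characterization of $a\in G_1\setminus G_2$ (using that $\calO_N=\calO_{K'}[x]$ for a uniformizer $x$), and the general case $v_N(x)=k$ is handled by a multiplicative trick, choosing $s,t$ with $sk+tp=1$ so that $(x^{a-1})^s=(x^sp^t)^{a-1}$ falls under the first case. You instead compute additively with the fixed uniformizer $\theta_1$ and the explicit congruence $\theta_1^{a-1}\equiv 1-\alpha_1\theta_1\pmod{\p_M^2}$; your binomial expansion $(\theta_1^{a-1})^k-1\equiv -k\alpha_1\theta_1$, with $p\nmid k$, plays exactly the role of the paper's exponent trick, and both arguments ultimately rest on the same input $a\in G_1\setminus G_2$. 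One remark: your ``only real obstacle,'' namely $v_N(a(y)-y)\ge 2$ for $y\in U_N$, is immediate from $a\in G_1$, since by definition (cf.\ \cite[Sec.~IV.1, Lemma 1]{SerreLocalFields}) the elements of $G_1$ act trivially on $\calO_N/\p_N^2$; the Teichm\"uller decomposition and the second expansion are therefore correct but unnecessary. What your approach buys is a self-contained computation from the data already fixed in Section \ref{subsection def und not}, at the cost of being longer; the paper's version is shorter but leans on two further citations to Serre.
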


\begin{proof}
We have to show that $v_N(x^{a-1} - 1) = 1$ which is equivalent to $x^{a-1} \in U_N^{(1)} \setminus  U_N^{(2)}$.
By our assumptions we have $a\in G_1\setminus G_2$. If $v_N(x) = 1$, then $\ON = \calO_{K'}[x]$ by \cite[Sec.~I.6, Prop.~18]{SerreLocalFields}
and, furthermore,  \cite[Sec.~IV.1, Lemma 1]{SerreLocalFields} implies $x^{a-1} \in U_N^{(1)} \setminus  U_N^{(2)}$. If $v_N(x) \in \{2, \ldots, p-1 \}$,
then we choose $s,t \in \Z$ such that $s v_N(x) + tp =1$. Then $v_N(x^s p^t) = 1$, so that $\left( x^{a-1} \right)^s = \left( x^{s}p^t \right)^{a-1} 
\in U_N^{(1)} \setminus  U_N^{(2)}$. Hence $x^{a-1} \not\in  U_N^{(2)}$, while clearly $x^{a-1}\in U_N^{(1)}$.
\end{proof}

In the following we write $(T_a,(a-1)^j) \sseq \OKG$ for the $\OKG$-submodule generated by $T_a$ and $(a-1)^j$ where $j$ is a non-negative integer.

\begin{lemma}\label{lemma2Bley}
a) Put $\theta := \theta_1 \theta_2$. Then $\p_N = \OKG \theta$.

b) For $j=0,\dots,p-1$ we have $\p_{N}^{j+1}=(p,(a-1)^j)\theta=(T_a,(a-1)^j)\theta$.
\end{lemma}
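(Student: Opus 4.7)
For part (a), the plan is to exploit the direct product structure $G = \langle a\rangle\times\langle b\rangle$, where $a$ fixes $K'$ pointwise and $b$ fixes $M$ pointwise. This yields the factorisation $\OKG\theta = \OK[a]\theta_1 \cdot \OK[b]\theta_2$ of $\calO_K$-submodules of $\calO_N$. Lemma \ref{defntheta1} gives $\OK[a]\theta_1 = \p_M$. For the other factor I would check that $\theta_2 \in \calO_{\tilde K'}^\times$: since $\tilde K'/\Qp$ is the unramified subextension of $K'/\Qp$ of degree $d$, the normal integral basis generator $\theta_2$ reduces modulo $\p_{\tilde K'}$ to a normal basis generator of the residue field extension, which is nonzero; the identification $\gal(K'/K) \cong \gal(\tilde K'/\Qp)$ combined with $\calO_{K'} = \OK \otimes_{\Zp}\calO_{\tilde K'}$ then yields $\OK[b]\theta_2 = \calO_{K'}$. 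Since $M/K$ is totally ramified and $K'/K$ is unramified we have $\calO_N = \calO_{K'}\cdot\calO_M$, and consequently $\OKG\theta = \calO_{K'}\cdot\p_M = \p_N$.

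For part (b), the equality $(p,(a-1)^j)\theta = (T_a,(a-1)^j)\theta$ is immediate from Lemma \ref{Ta_p}: for $0\le j\le p-1$ one has $p - T_a = u(a-1)^{p-1}\in \OKG\cdot(a-1)^j$. It therefore remains to prove $(p,(a-1)^j)\theta = \p_N^{j+1}$. Using that $a$ fixes $\theta_2$ and $b$ fixes $\theta_1$, one obtains $\OKG\cdot p\theta + \OKG\cdot(a-1)^j\theta = \calO_{K'}\cdot(p,(a-1)^j)\theta_1$, and via base change by the flat $\OK$-algebra $\calO_{K'}$, together with $\calO_{K'}\cdot\p_M^{j+1} = \p_N^{j+1}$, the problem reduces to proving the analogous local identity $(p,(a-1)^j)\theta_1 = \p_M^{j+1}$ as $\OK[a]$-submodules of $\calO_M$.

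This local identity is the heart of the lemma and the main obstacle. The inclusion $\subseteq$ follows at once from Lemma \ref{lemma1Bley} ($v_M((a-1)^j\theta_1) = j+1$) and $v_M(p\theta_1) = p+1$. For the reverse inclusion I plan a valuation-matching argument. From Lemma \ref{Ta_p} one derives the key identity $(a-1)^p\theta_1 = u^{-1}p(a-1)\theta_1$, so that $(a-1)^k\theta_1$ for $k\ge p$ lies in $p\p_M$. Consequently the $\OK$-submodule $(p,(a-1)^j)\theta_1$ is spanned by the $p$ elements $(a-1)^k\theta_1$ for $j\le k\le p-1$ together with $p(a-1)^k\theta_1$ for $0\le k\le j-1$. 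By Lemma \ref{lemma1Bley} these have pairwise distinct valuations $j+1, j+2, \ldots, p+j$, matching exactly the valuations of the $\OK$-basis $\{\theta_1^{j+1},\ldots,\theta_1^{j+p}\}$ of $\p_M^{j+1}$. The change-of-basis matrix is therefore upper triangular with unit diagonal entries, so the two $\OK$-lattices coincide. The delicate point, which ties the argument to the weakly ramified hypothesis, is that $(a-1)$ produces the complete unbroken sequence of valuation jumps $1, 2,\ldots,p$ via Lemma \ref{lemma1Bley}, so that no valuation in the target range is skipped.
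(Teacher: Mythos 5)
Your proof is correct; part (a) is the argument the paper leaves as ``immediate,'' spelled out, while in part (b) the decisive counting step is executed along a genuinely different route. Both proofs get the easy inclusion $(T_a,(a-1)^j)\theta\subseteq\p_N^{j+1}$ from Lemma \ref{lemma1Bley} in the same way; they diverge on the reverse inclusion. The paper stays with $N/K$ and compares indices abstractly: writing $I:=(T_a,(a-1)^j)$, freeness of $\OKG\theta$ gives $[\OKG\theta:I\theta]=[\OKG:I]$, one bounds $[\OKG:I]\leq q^{dj}=[\p_N:\p_N^{j+1}]$ by exhibiting the coset representatives $\bigoplus_{i=0}^{j-1}(\OK/\p_K)[b](a-1)^i$, and the sandwich $I\theta\subseteq\p_N^{j+1}\subseteq\OKG\theta$ forces equality. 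You instead reduce to $M/K$ by the (legitimate) flat base change $\calO_{K'}\otimes_{\OK}-$, use the identity $(a-1)^p\theta_1=u^{-1}p(a-1)\theta_1$ (a correct consequence of Lemma \ref{Ta_p}, since $(a-1)T_a=0$) to cut the generating set of $I\theta_1$ down to $p$ elements with pairwise distinct valuations $j+1,\dots,j+p$, and match these against the $\OK$-basis $\theta_1^{j+1},\dots,\theta_1^{j+p}$ of $\p_M^{j+1}$. Your version is more explicit and makes visible exactly where weak ramification enters (the unbroken chain of valuation jumps from Lemma \ref{lemma1Bley}), at the cost of the extra reduction step. One small imprecision, not a gap: since the valuations $j+1,\dots,j+p$ are pairwise incongruent modulo $p$, the change-of-basis matrix is triangular with unit diagonal only modulo $\p_K$ --- the entries on the wrong side of the diagonal lie in $\p_K$ rather than vanishing --- but its determinant is then still a unit of $\OK$, which is all the lattice comparison requires.
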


\begin{proof}
Part (a) is immediate from $\ON = \calO_M \calO_{K'}$ and the definition of $\theta_1$ and $\theta_2$.

By Lemma \ref{Ta_p} we have $(T_a, (a-1)^j) = (p, (a-1)^j)$ which shows the second equality in (b).
Lemma \ref{lemma1Bley} implies the chain of inclusions  $I\theta \subseteq \p_{N}^{j+1}\subseteq \p_{N}= \calO_{K}[G]\theta$, where we have set
$I = I_j := (T_a, (a-1)^j)$. 
Since $\theta$ is a normal basis element we derive
$[\calO_{K}[G]:I]=[\OKG\theta : I\theta]$. So it is enough to show the inequality
$[\calO_{K}[G]:I]\leq [\p_{N}:\p_{N}^{j+1}]$. We observe that
\[
\calO_K[G]=\bigoplus_{i=0}^{p-1}\calO_{K}[b](a-1)^i.
\]
Finally, in order to complete the proof,  we recall that $[\p_{N}:\p_{N}^{j+1}] = q^{dj}$ and note that the  $q^{dj}$ elements in
$\bigoplus_{i=0}^{j-1} (\calO_{K}/\p_{K})[b](a-1)^i$
cover the quotient $\calO_{K}[G]/I$. 
\end{proof}
\end{subsection}

\end{section}

\begin{section}{The computation of $E(\exp(\calL))_p$}\label{heart}

We assume the notations introduced in the previous section. We put $\calL := p\p_N = \p_N^{p+1}$. By Lemma
\ref{defntheta1} $\calL$ is a free $\ZpG$-submodule of $\ON$. Moreover, the exponential function of $N$ is defined on $\calL$
and by \cite[II, Satz (5.5)]{Neukirch92} we have $\exp(\calL) = U_N^{(p+1)}$. In this section we will compute
 a representative in $K_0(\ZpG, \Qp) \simeq \QpG^\times / \ZpG^\times$ for $E(\exp(\calL))_p = E( U_N^{(p+1)} )_p$ as described
at the end of Section \ref{definition of EX}.   

\begin{subsection}{The local fundamental class}

We will need the algebra $\Nur=\Kur\otimes_K N$, on which the group $\gal(\Kur/K)\times G$ acts canonically. 
We obtain an isomorphism  $\Nur \to N_0^d$
by sending $x\otimes y$ to $(F^{d-1}(x)y,F^{d-2}(x)y,\dots,F(x)y,xy)$. 
Then the action of $\gal(\Kur/K)\times G$ on $\Nur$ induces an action on $N_0^d$. 
For later reference we explicitly describe the action for some particular elements (see \cite[Sec.~VI]{Chinburg85}):
\begin{eqnarray}
&& (F^{-1}\times b)(x_1,x_2,\dots,x_d)=(x_1^b,x_2^b,\dots,x_d^b), \nonumber \\
&& (1\times a)(x_1,x_2,\dots,x_d)=(x_1^a,x_2^a,\dots,x_d^a), \label{galois action rules 1} \\
&& (F\times 1)(x_1,x_2,\dots,x_d)=(x_d^{F_0},x_1,x_2,\dots,x_{d-1}). \nonumber
\end{eqnarray}
In particular, we deduce from (\ref{galois action rules 1})
\begin{eqnarray}
&& (1\times b)(x_1,x_2,\dots,x_d) \nonumber \\
&=& (F\times 1)(F^{-1}\times b)(x_1,x_2,\dots,x_d)=((x_d^b)^{F_0},x_1^b,x_2^b,\dots,x_{d-1}^b) \label{galois action rules 2} .
\end{eqnarray}

If $L$ is a field extension of $\Qp$ we put $L(s) := L^\times / U_L^{(s)}$ for each non-negative integer $s$. 
Let $\omega:\Nur^\times\to\Z$ be the sum of the discrete valuations of the different components of $\Nur^\times \simeq (N_0^\times)^{d}$.
By the same arguments as in the proof
of \cite[Prop.~6.1]{Chinburg85} we obtain the following proposition.

\begin{lemma}\label{Serresequence}
We have the following exact sequence
\begin{equation}\label{exactsequenceSerre}
0\to N(p+1)\to\Nur(p+1)\xrightarrow{(F-1)\times 1}\Nur(p+1)\xrightarrow{\omega}\Z\to 0
\end{equation}
of $\ZG$-modules.
The extension class of this sequence is induced by the negative of the local fundamental class in $\mathrm{Ext}_\ZG^2(\Z,N^\times)$.
\end{lemma}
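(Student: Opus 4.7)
The plan is to obtain the stated sequence as the pushout of Serre's classical four-term exact sequence
\[
0 \to N^\times \to \Nur^\times \xrightarrow{(F-1)\times 1} \Nur^\times \xrightarrow{\omega} \Z \to 0
\]
of $\ZG$-modules, whose extension class in $\Ext_\ZG^2(\Z, N^\times)$ is the negative of the local fundamental class (as in \cite[Sec.~VI]{Chinburg85}). Pushing out along the canonical surjection $N^\times \twoheadrightarrow N(p+1)$ yields a four-term sequence with first term $N(p+1)$; one then identifies its middle and right terms with $\Nur(p+1)$.

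First I would verify exactness of the displayed sequence directly via a diagram chase against Serre's sequence. Injectivity of $N(p+1) \to \Nur(p+1)$ reduces to the equality $U_\Nur^{(p+1)} \cap N^\times = U_N^{(p+1)}$, which holds because $\Nur/N$ is unramified. Surjectivity of $\omega$ is clear. Exactness at $\Nur(p+1)\xrightarrow{\omega}\Z$ is automatic, since the image of $(F-1)\times 1$ on $\Nur^\times$ equals $\ker\omega$ by Serre and this property is preserved upon quotienting by $U_\Nur^{(p+1)}\subseteq\ker\omega$. The delicate point is exactness at the left $\Nur(p+1)\xrightarrow{(F-1)\times 1}\Nur(p+1)$, which requires that $(F-1)\times 1$ be surjective on $U_\Nur^{(p+1)}$. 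Via the isomorphism $\Nur^\times \cong (N_0^\times)^d$ and the action rules \eqref{galois action rules 1}, a short telescoping computation of $(F-1)\times 1$ reduces this to surjectivity of $F_0-1$ on $U_{N_0}^{(p+1)}$, which follows from Lemma~\ref{F0-1surjective} once one observes that $\Nur(p+1)$ is canonically isomorphic to the analogous quotient for the completion, so that one may apply the lemma inside $U_{\hat N_0}$ without loss.

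For the identification of the extension class, I would invoke the formal fact that pushing out a four-term exact sequence along a morphism $f\colon A\to A'$ on the leftmost term produces an extension whose class in $\Ext_\ZG^2(D,A')$ is the image $f_*(c)$ of the original class $c\in\Ext_\ZG^2(D,A)$ under the change-of-coefficients map. Applied with $f$ the canonical projection $N^\times\to N(p+1)$ and $c$ equal to the negative of the local fundamental class, this immediately yields the claim.

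The main obstacle I anticipate is the exactness at the left occurrence of $(F-1)\times 1$ on $\Nur(p+1)$: one must show that every class killed by $(F-1)\times 1$ is represented by an element of $N^\times$. The underlying analytic fact is surjectivity of $(F-1)\times 1$ on $U_\Nur^{(p+1)}$, and navigating the subtle distinction between $\Nur$ and its completion $\hat\Nur$, so that Lemma~\ref{F0-1surjective} may legitimately be brought to bear, is the only genuine technical point; once it is addressed, the rest of the proof is formal.
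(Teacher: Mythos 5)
Your strategy is the same as the paper's, which simply declares the proof ``analogous to'' Chinburg's Proposition~6.1: truncate Serre's description of the fundamental class at level $p+1$ and identify the resulting extension class as a pushforward. The outline is right, but one assertion you lean on twice is false as written. The untruncated sequence $0\to N^\times\to\Nur^\times\xrightarrow{(F-1)\times 1}\Nur^\times\xrightarrow{\omega}\Z\to 0$ with the paper's (uncompleted) $\Nur=\Kur\otimes_K N$ is \emph{not} exact at the middle term: via $\Nur^\times\cong(N_0^\times)^d$ and the telescoping you describe, exactness there amounts to surjectivity of $F_0-1$ on $U_{N_0}$, and this fails for the non-complete maximal unramified extension (if $z$ lies in a finite subextension $L$ with $z^{F_0-1}=u$, then $\mathcal N_{L/N}(u)=1$, which a generic unit of a finite subextension does not satisfy for any $L$). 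Consequently both your appeal to ``the image of $(F-1)\times 1$ on $\Nur^\times$ equals $\ker\omega$ by Serre'' when checking exactness at $\omega$, and your use of that sequence as the source of the pushout, must be carried out with the completed algebra $\Nnrv$ (equivalently with $\hat N_0$), where the four-term sequence is genuinely exact and represents minus the fundamental class.

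The repair is exactly the observation you already make for the other exactness spot: the canonical map $\Nur(p+1)\to\Nnrv(p+1)$ is an isomorphism (density of $N_0$ in $\hat N_0$ together with $N_0^\times\cap U_{\hat N_0}^{(p+1)}=U_{N_0}^{(p+1)}$), so one pushes out the completed sequence and transports back; alternatively, note that the successive approximation in the proof of Lemma~\ref{F0-1surjective} needs only finitely many steps to solve $z^{F_0-1}\equiv u \pmod{\p_{N_0}^{p+1}}$ with $z\in U_{N_0}$, so all truncated surjectivities hold already in $N_0$ and the completion can be avoided entirely. One further small point: for exactness at the first copy of $\Nur(p+1)$ you need surjectivity of $F_0-1$ on $U^{(p+1)}$ itself, which is slightly stronger than the statement of Lemma~\ref{F0-1surjective}; it follows from the same approximation argument started at level $p+1$ from $z=1$ (or by adjusting a given solution by an element of the kernel $U_N$). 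With these adjustments your argument is complete and is the intended one.
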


\begin{proof}
Analogous to the proof of \cite[Prop.~6.1]{Chinburg85}.
\end{proof}

Let
\[
\ff'=\Z[G]z_1\oplus\Z[G]z_2,
\]
\[
\ff_{\geq n}= \bigoplus_{j=n}^{p-1}\bigoplus_{k=1}^m\Z[G] v_{k,j}
\]
and let
\[\ff=\ff_{\geq 0}.\]

Note that the assignment $v_{k,j} \mapsto \alpha_kw_j$ induces an isomorphism
\begin{equation}\label{ff iso}
\ff_p := \ff \otimes_\Z \Zp \stackrel{\simeq}\lra \bigoplus_{j=0}^{p-1}\bigoplus_{k=1}^m\Z_p[G]\alpha_kw_j = \bigoplus_{j=0}^{p-1}\OKG w_j
\end{equation}
of free $\ZpG$-modules.

In the following we let $[x_1, \ldots, x_d]$ denote the class in $\Nur(p+1)$ represented by
$(x_1, \ldots, x_d)$. If $x = x_1 = \ldots = x_d$ we will often write $[x]$ instead of $[x, \ldots,x]$.

\begin{lemma}\label{basic diagram}
There is a commuting diagram
\[
\xymatrix{
0\ar[r]&X(2)\oplus \ff\ar[r]\ar[d]^{f_4}&\ff'\oplus \ff\ar[rr]^{\delta_2}\ar[d]^{f_3}&&\Z[G]z_0\ar[r]\ar[d]^{f_2}&\Z\ar[r]\ar[d]^{=}&0\\
0\ar[r]&N(p+1)\ar[r]&\Nur(p+1)\ar[rr]^{(F-1)\times 1}&&\Nur(p+1)\ar[r]&\Z\ar[r]&0.
}
\]
of $\ZG$-modules with
\[\begin{split}&\delta_2(z_1)=(b-1)z_0,\\&
\delta_2(z_2)=(a-1)z_0,\\&
\delta_2(v_{k,j})=0\text{ for all $k$ and $j$},\\&
f_2(z_0)=f_3(z_1)=[\theta_1,1,1,\dots,1],\\&
f_3(z_1)=[\theta_1,1,\dots,1],\\&
f_3(z_2)=[\gamma,\gamma,\dots,\gamma],\\&
f_3(v_{k,j})=1+\alpha_k(a-1)^j\theta \text{ for all $k$ and $j$}.\end{split}\]
Further, $X(2) := \mathrm{ker}(\delta_2\mid_{\ff'})$ and  $f_4$ is the restriction of $f_3$ to $X(2) \oplus \ff$.
\end{lemma}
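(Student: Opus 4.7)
The overall task is to verify that the prescribed map of four-term complexes is well-defined and commutes: the upper row is an elementary partial resolution of $\Z$ enlarged by the free summand $\ff$ which maps to zero under $\delta_2$, while the lower row is the Serre-type sequence of Lemma \ref{Serresequence}. Since $\ff', \ff$ and $\Z[G]z_0$ are free $\Z[G]$-modules, the stated prescriptions on generators extend uniquely to $\Z[G]$-homomorphisms $\delta_2, f_2, f_3$. Each element $1+\alpha_k(a-1)^j\theta$ is a principal unit of $N$ because $\alpha_k(a-1)^j\theta\in\p_N^{j+1}$ by Lemmas \ref{lemma1Bley} and \ref{lemma2Bley}, so the images $f_3(v_{k,j})$ make sense in $\Nur(p+1)$.

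Exactness of the upper row is immediate: $\mathrm{im}(\delta_2)=(a-1,b-1)\ZG z_0$ is the augmentation ideal since $G=\langle a,b\rangle$, while $\ker(\delta_2|_{\ff'})=X(2)$ by definition and $\ff\subseteq\ker(\delta_2)$. The right-hand square reduces to $\omega(f_2(z_0))=1$, which holds because $v_{N_0}(\theta_1)=1$ (the extension $N/M$ is unramified as the compositum with the unramified $K'$, and $N_0/N$ is also unramified); the Galois action permutes the $d$ components and preserves valuations, so $\omega\circ f_2=\epsilon$ on all of $\Z[G]z_0$. The left square commutes tautologically once $f_4$ is known to land in $N(p+1)$, which will follow from commutativity of the middle square combined with exactness of the lower row at its middle term.

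The main work is the middle square, checked on generators of $\ff'\oplus\ff$. For $z_1$, the action rules \eqref{galois action rules 1} and \eqref{galois action rules 2} together with $\theta_1^b=\theta_1$ show that both $((F-1)\times 1)f_3(z_1)$ and $f_2((b-1)z_0)=(b-1)[\theta_1,1,\dots,1]$ equal $[\theta_1^{-1},\theta_1,1,\dots,1]$. For $z_2$ one computes
\[
((F-1)\times 1)[\gamma,\gamma,\dots,\gamma]=[\gamma^{F_0-1},1,\dots,1],
\]
which must match $(a-1)[\theta_1,1,\dots,1]=[\theta_1^{a-1},1,\dots,1]$ in $\Nur(p+1)$; this is exactly the congruence $\gamma^{F_0-1}\equiv\theta_1^{a-1}\pmod{\p_{N_0}^{p+1}}$ guaranteed by Lemma \ref{F0-1surjective}. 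For the free generators $v_{k,j}$, $\delta_2(v_{k,j})=0$, so one must verify $((F-1)\times 1)f_3(v_{k,j})=0$; but $f_3(v_{k,j})=[y,\dots,y]$ is the diagonal image of an element $y\in N^\times$, and $F_0\in\gal(N_0/N)$ fixes $N$, so $((F-1)\times 1)[y,\dots,y]=[y^{F_0-1},1,\dots,1]=[1,\dots,1]$.

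The only delicate point is the verification for $z_2$, and this is precisely why the approximation argument in Lemma \ref{F0-1surjective} had to be carried out to modulus $\p_{N_0}^{p+1}$: any coarser congruence would leave an error term outside $U_{N_0}^{(p+1)}$ which would spoil equality in $\Nur(p+1)$. Once the middle square commutes, the image of $X(2)\oplus\ff$ under $f_3$ lies in $\ker((F-1)\times 1)=N(p+1)$ by exactness of the bottom row, so $f_4$ is well-defined as claimed.
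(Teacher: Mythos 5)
Your verification is correct and complete: the component-wise computations with the action rules (\ref{galois action rules 1})--(\ref{galois action rules 2}), the use of Lemma \ref{F0-1surjective} for the $z_2$-square, and the deduction that $f_4$ lands in $N(p+1)$ from exactness of the bottom row are exactly the "straightforward verification" the paper leaves to the reader. No gaps.
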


\begin{proof}
Straightforward verification.
\end{proof}

The diagram in Lemma \ref{basic diagram} will be fundamental for our proof of Theorem \ref{main theorem}.
We will use the top exact sequence to compute groups of the form $\Ext_\ZG^2(\Z, \_)$. By Proposition \ref{rep of lfc} below we can then apply the 
recipe described in Section \ref{definition of EX} with $\Sigma = X(2) \oplus \ff$ and $\varphi = -f_4$ to compute $E(U_N^{(p+1)})_p$.

In the remainder of this subsection we will provide the proof of the following lemma.

\begin{lemma}\label{surjectivity lemma}
The homomorphism $f_4$ is surjective.
\end{lemma}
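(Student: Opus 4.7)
The plan is to exhibit explicit preimages under $f_4$ for a generating set of $N(p+1)=N^\times/U_N^{(p+1)}$. First observe that $v_{k,j}\in\ff$ lies in $\ker\delta_2 = X(2)\oplus\ff$ automatically, and that $T_b z_1, T_a z_2 \in X(2)$ since $T_b(b-1)=T_a(a-1)=0$ in $\Z[G]$; by commutativity of the diagram in Lemma \ref{basic diagram}, the image of $f_3$ on $\ker\delta_2$ is contained in $\ker((F-1)\times 1)=N(p+1)$, so all these are valid inputs for $f_4$. As an abelian group $N(p+1)$ is generated by (i) a uniformizer, (ii) a generator of the residue field units $U_N/U_N^{(1)}\simeq\F_{q^d}^\times$, and (iii) the principal units $U_N^{(1)}/U_N^{(p+1)}$; I will treat these in turn.

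For (i), $f_4(T_b z_1)=[\theta_1,\ldots,\theta_1]$ is the diagonal image of $\theta_1\in N$, which is a uniformizer by Lemma \ref{defntheta1} and the fact that $N/M$ is unramified. For (ii), Lemma \ref{F0-1surjective} gives $\gamma\equiv\zeta_{q^d-1}(1+x_2\theta_1)\pmod{\p_{N_0}^2}$; since $a$ fixes $\zeta_{q^d-1}$ and $x_2$, each conjugate satisfies $\gamma^{a^i}\equiv\zeta_{q^d-1}\pmod{\p_{N_0}}$ and hence $T_a\gamma\equiv\zeta_{q^d-1}^p\pmod{\p_{N_0}}$. Because $\gcd(p,q^d-1)=1$ this residue is still a primitive $(q^d-1)$-th root of unity, so modulo $U_N^{(1)}$ the class $f_4(T_a z_2)=[T_a\gamma,\ldots,T_a\gamma]$ represents a generator of $\F_{q^d}^\times$. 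The identity $(T_a\gamma)^{F_0-1}=T_a(\gamma^{F_0-1})\equiv T_a(\theta_1^{a-1})=1\pmod{\p_{N_0}^{p+1}}$ (using that $F_0$ and $a$ commute in the abelian group $\gal(N_0/K)$) confirms that this class is $F_0$-fixed in $N_0(p+1)$, hence lies in $N(p+1)$ via the identification supplied by Lemma \ref{Serresequence}.

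The main technical step is (iii), showing that the $G$-translates of $f_4(v_{k,j})=1+\alpha_k(a-1)^j\theta$ generate $U_N^{(1)}/U_N^{(p+1)}$. Filtering by $U_N^{(j)}$ it suffices to check, for each $j\in\{0,1,\ldots,p-1\}$, that the corresponding translates exhaust an $\F_p$-basis of the graded piece $\p_N^{j+1}/\p_N^{j+2}\simeq\F_{q^d}$. A short induction using $\theta_1^{a-1}\equiv 1-\alpha_1\theta_1\pmod{\p_M^2}$ yields $(a-1)^j\theta\equiv(-1)^j j!\,\alpha_1^j\theta_1^{j+1}\theta_2\pmod{\p_N^{j+2}}$, in which $j!\in\Z_p^\times$ since $j\leq p-1$. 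The element $b^i\cdot f_4(v_{k,j})$ then reduces modulo $U_N^{(j+2)}$ to $1+(-1)^j j!\,\alpha_1^j\alpha_k\theta_1^{j+1}\theta_2^{b^i}$, using that $b$ fixes $\theta_1,\alpha_1,\alpha_k$ and maps $\theta_2\mapsto\theta_2^{b^i}$. The coprimality $(m,d)=1$ implies $K$ and $\tilde K'$ are linearly disjoint over $\Q_p$ and that $b|_{\tilde K'}$ generates $\gal(\tilde K'/\Q_p)$, so $\{\theta_2^{b^i}:0\leq i<d\}$ is a $\Z_p$-basis of $\oo_{\tilde K'}$; combined with the $\Z_p$-basis $\{\alpha_k\}$ of $\OK$ from (\ref{choice of alpha}), the set $\{\alpha_k\theta_2^{b^i}\}_{k,i}$ is a $\Z_p$-basis of $\oo_{K'}=\OK\otimes_{\Z_p}\oo_{\tilde K'}$ and reduces to an $\F_p$-basis of $\F_{q^d}$. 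This covers each graded piece, and the filtration argument delivers (iii). Combining (i)--(iii) establishes the surjectivity of $f_4$. The main obstacle is the last basis verification, where the coprimality hypothesis $(m,d)=1$ enters decisively at two points: to split $\oo_{K'}$ as $\OK\otimes_{\Z_p}\oo_{\tilde K'}$ and to ensure that $b$ acts as a generator on $\tilde K'$.
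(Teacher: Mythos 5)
Your proof is correct and follows essentially the same route as the paper: $f_4(T_bz_1)=[\theta_1]$ supplies a uniformizer, $f_4(T_az_2)\equiv\zeta_{q^d-1}^p \pmod{\p_{N}}$ supplies a generator of $U_N/U_N^{(1)}$ (using $\gcd(p,q^d-1)=1$), and the principal units $U_N^{(1)}/U_N^{(p+1)}$ are swept up by a descending filtration argument on the images of the $v_{k,j}$. The only difference is in the execution of that last step: where the paper invokes Lemma \ref{f4surj1} (resting on Lemmas \ref{lemma2Bley} and \ref{lemma3Bley} and the $p$-completion $\hat f_4$), you check directly that the $b^i$-translates of $f_4(v_{k,j})$, via the expansion $(a-1)^j\theta\equiv(-1)^j j!\,\alpha_1^j\theta_1^{j+1}\theta_2$ and the normal-basis property of $\theta_2$, yield an $\F_p$-basis of each graded piece $U_N^{(j+1)}/U_N^{(j+2)}$ --- both versions are sound.
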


As a consequence we obtain

\begin{prop}\label{rep of lfc}
The map $-f_4$ represents the local fundamental class.
\end{prop}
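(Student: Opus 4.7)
The plan is to deduce the proposition by interpreting the diagram of Lemma \ref{basic diagram} as a morphism of $2$-extensions of $\Z$ and then invoking the functoriality of Yoneda $\Ext^2$.

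First I would check that the top row of that diagram is a truncated projective resolution of $\Z$: both $\ff'\oplus\ff$ and $\ZG z_0$ are $\ZG$-free by construction; exactness at $\ZG z_0$ follows because the image of $\delta_2$ equals $(a-1,b-1)\ZG$, which coincides with the augmentation ideal since $G=\langle a,b\rangle$; and exactness at $\ff'\oplus\ff$ is immediate from the definition of $X(2)$ together with $\delta_2|_{\ff}=0$.

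Given this resolution, I would invoke the standard Yoneda description
\[
\Ext^2_{\ZG}(\Z,M)\;\simeq\;\Hom_{\ZG}\bigl(X(2)\oplus\ff,\,M\bigr)\big/\bigl\{\text{morphisms factoring through }\ff'\oplus\ff\bigr\}
\]
valid for every $\ZG$-module $M$. By the functoriality of Ext, the commutativity of Lemma \ref{basic diagram} then identifies $f_4\in\Hom_{\ZG}(X(2)\oplus\ff,\,N(p+1))$ with the Yoneda class of the bottom row in $\Ext^2_{\ZG}(\Z,N(p+1))$. To transport the conclusion of Lemma \ref{Serresequence} from $\Ext^2_{\ZG}(\Z,N^\times)$ to $\Ext^2_{\ZG}(\Z,N(p+1))$ I would observe that $U_N^{(p+1)}=\exp(\calL)$ with $\calL=p\p_N$ a free $\ZpG$-lattice is $G$-cohomologically trivial, so that the short exact sequence $1\to U_N^{(p+1)}\to N^\times\to N(p+1)\to 1$ induces the canonical isomorphism $\Ext^2_{\ZG}(\Z,N^\times)\simeq\Ext^2_{\ZG}(\Z,N(p+1))$ used in Section \ref{definition of EX}. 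Under this isomorphism the bottom row still represents the negative of the local fundamental class, hence $f_4$ represents $-\mathrm{lfc}$ and $-f_4$ represents the local fundamental class itself.

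The argument is essentially a diagram chase, and the substantive work has already been carried out in Lemmas \ref{basic diagram} and \ref{Serresequence}; there is no real obstacle here. The surjectivity of $f_4$ established in Lemma \ref{surjectivity lemma} is not logically needed for this proposition, but it is what will subsequently make $-f_4$ an admissible choice of $\varphi$ in the recipe of Section \ref{definition of EX}, so that the kernel $\Kappa=\ker(-f_4)$ can be formed and the explicit computation of $E(U_N^{(p+1)})_p$ can begin.
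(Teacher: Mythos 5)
Your argument is correct and is precisely the standard comparison-of-resolutions argument that the paper invokes implicitly by citing the proof of Chinburg's Lemma 6.3: the top row of Lemma \ref{basic diagram} is a truncated free resolution of $\Z$, the diagram is a chain map over $\mathrm{id}_\Z$, so its degree-two component $f_4$ represents the Yoneda class of the bottom row, which by Lemma \ref{Serresequence} is the image of the negative of the local fundamental class in $\Ext^2_{\ZG}(\Z, N(p+1))$. Your side remarks (cohomological triviality of $U_N^{(p+1)}$ identifying $\Ext^2_{\ZG}(\Z,N^\times)$ with $\Ext^2_{\ZG}(\Z,N(p+1))$, and surjectivity of $f_4$ being needed only for the subsequent recipe, not for this proposition) are also accurate.
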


\begin{proof}
This can be proved by mimicking the proof of \cite[Lemma 6.3]{Chinburg85}.
\end{proof}

\begin{lemma}\label{X2generators}
We have
\[X(2)=\langle(a-1)z_1-(b-1)z_2,T_bz_1,T_az_2\rangle_{\Z[G]}.\]
\end{lemma}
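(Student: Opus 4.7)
The inclusion $\supseteq$ is immediate: since $G$ is abelian, $\delta_2\bigl((a-1)z_1 - (b-1)z_2\bigr) = (a-1)(b-1)z_0 - (b-1)(a-1)z_0 = 0$, while $\delta_2(T_b z_1) = T_b(b-1)z_0 = 0$ and $\delta_2(T_a z_2) = T_a(a-1)z_0 = 0$. For the reverse inclusion, my plan is to exploit the coprimality $(p,d)=1$, which forces $G = \langle a\rangle\times\langle b\rangle$ and $\Z[G] = \Z[b][a]/(a^p-1)$, and to reduce an arbitrary element of $X(2)$ to $0$ modulo the three proposed generators by a direct index-chasing argument.

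Given $\lambda z_1 + \mu z_2 \in X(2)$, I would write $\lambda = \sum_{i=0}^{p-1} \lambda_i a^i$ and $\mu = \sum_{i=0}^{p-1} \mu_i a^i$ with $\lambda_i,\mu_i \in \Z[b]$, and unpack the equation $\lambda(b-1)+\mu(a-1)=0$ by collecting powers of $a$ into the cyclic system
\[
\mu_i - \mu_{i-1} = \lambda_i(b-1), \qquad i \in \Z/p\Z.
\]
I would then reduce via two normalization steps. First, for any $r\in\Z[G]$ the product $rT_a$ has all $a$-coefficients equal (namely, the sum of the $a$-coefficients of $r$, viewed in $\Z[b]$), so subtracting an appropriate multiple of $T_a z_2$ shifts all $\mu_i$ by the same element of $\Z[b]$ and lets me normalize $\mu_0 = 0$. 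Second, summing the displayed equations over $i$ gives $\bigl(\sum_i\lambda_i\bigr)(b-1)=0$, so $\sum_i\lambda_i \in \ker\bigl(b-1\colon\Z[b]\to\Z[b]\bigr) = \Z\cdot T_b$; since adding an integer multiple of $T_b z_1$ alters only $\lambda_0$ (and only by an integer multiple of $T_b$), I can further arrange $\sum_i \lambda_i = 0$.

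With these normalizations in place, I set $\nu_0 := 0$ and $\nu_i := \lambda_1 + \cdots + \lambda_i$ for $1\le i\le p-1$, and put $\nu := \sum_i \nu_i a^i \in \Z[G]$. The system telescopes to $\mu_i = \nu_i(b-1)$ for every $i$ (the wrap-around case $i=0$ uses $\sum_i\lambda_i=0$), so $\mu = \nu(b-1)$; a direct coefficient check, again using $\sum_i\lambda_i=0$, gives $\nu(a-1) = -\lambda$. Hence subtracting $\nu\cdot\bigl((a-1)z_1 - (b-1)z_2\bigr)$ kills both components simultaneously, completing the reduction. I expect the only real obstacle to be the bookkeeping with cyclic indices modulo $p$; the conceptual inputs are simply that the $\Z[a]$- and $\Z[b]$-annihilators of $a-1$ and $b-1$ are $T_a\Z$ and $T_b\Z$ respectively, together with the Koszul-type commutativity syzygy between $a-1$ and $b-1$.
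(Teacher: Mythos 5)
Your proof is correct and is essentially the paper's own argument in different packaging: the paper expands everything with double-indexed integer coefficients $\alpha_{i,j},\beta_{i,j}$ and writes down the same telescoping partial sums ($\gamma_{i,j}=-\sum_{1\le\ell\le i}\alpha_{\ell,j}$ is exactly your $-\nu_i$, its $\mu_0=\alpha$ is your $T_bz_1$-normalization, and its $\nu_j=\beta_{0,j}$ is your $T_az_2$-normalization). The only slip is a sign at the very end: with your conventions $\nu(a-1)=-\lambda$ and $\nu(b-1)=\mu$, so you must \emph{add} $\nu\cdot\bigl((a-1)z_1-(b-1)z_2\bigr)$ rather than subtract it to kill both components; this does not affect the conclusion that $x$ lies in the stated $\Z[G]$-span.
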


\begin{proof}
The inclusion ''$\supseteq$'' is immediate from the definition of $\delta_2$.
Let us consider the reverse inclusion. Let
\[
x=\sum_{i=0}^{p-1}\sum_{j=0}^{d-1}\alpha_{i,j}a^ib^jz_1+\sum_{i=0}^{p-1}\sum_{j=0}^{d-1}\beta_{i,j}a^ib^jz_2\in X(2)
\]
with $\alpha_{i,j}, \beta_{i,j} \in \Z$. From $\delta_2(x) = 0$ we derive
\begin{equation}\label{lemmaX2firsteq}
\alpha_{i,j-1}-\alpha_{i,j}+\beta_{i-1,j}-\beta_{i,j}=0
\end{equation}
for all $0\leq i<p$ and $0\leq j<d$. 
Here and in the following we regard all indices as integers modulo $p$ and $d$ respectively.

From (\ref{lemmaX2firsteq}) we deduce that
$\alpha := \sum_{i=0}^{p-1}\alpha_{i,j}$ does not depend on the choice of $j$. 
Now we are looking for integers $\gamma_{i,j},\mu_i,\nu_j$, for $0\leq i<p$ and $0\leq j<d$, such that
\[
\begin{split}x&=\sum_{i=0}^{p-1}\sum_{j=0}^{d-1}\gamma_{i,j}a^ib^j((a-1)z_1-(b-1)z_2)+\sum_{i=0}^{p-1}\mu_ia^iT_bz_1+\sum_{j=0}^{d-1}\nu_jb^jT_az_2\\
&=\sum_{i=0}^{p-1}\sum_{j=0}^{d-1}(\gamma_{i-1,j}-\gamma_{i,j}+\mu_i)a^ib^jz_1+\sum_{i=0}^{p-1}\sum_{j=0}^{d-1}(-\gamma_{i,j-1}+\gamma_{i,j}+\nu_j)a^ib^jz_2.
\end{split}
\]
So, in other words, the lemma is proved if we find integers $\gamma_{i,j},\mu_i,\nu_j$ such that
\begin{equation}\label{lemmaX2toprove1}
\alpha_{i,j}=\gamma_{i-1,j}-\gamma_{i,j}+\mu_i
\end{equation}
and
\begin{equation}\label{lemmaX2toprove2}
\beta_{i,j}=-\gamma_{i,j-1}+\gamma_{i,j}+\nu_j. 
\end{equation}
With  $\nu_j :=\beta_{0,j}$, $\mu_0:=\alpha$, $\mu_i=0$ for $i>0$ and  $\gamma_{i,j}=-\sum_{1\leq\ell\leq i}\alpha_{\ell,j}$ 
it is straightforward to verify  (\ref{lemmaX2toprove1}). Equality (\ref{lemmaX2toprove2}) is proved by an easy induction on $i$ using (\ref{lemmaX2firsteq}).
\end{proof}

We evaluate $f_4$ at the three special elements of $X(2)$ given by the last lemma.

\begin{lemma}\label{f4calc}
We have
\begin{equation}\label{f4one}
f_4((a-1)z_1-(b-1)z_2)=[\gamma]^{1-b}, 
\end{equation}
\begin{equation}\label{f4two}
f_4(T_bz_1)=[\theta_1]
\end{equation}
and
\begin{equation}\label{f4three}
f_4(T_az_2)=[\gamma]^{T_a}.
\end{equation}
\end{lemma}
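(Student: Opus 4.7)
The plan is to verify each of the three equalities by direct unwinding of the formulas for $f_3$ recorded in Lemma~\ref{basic diagram}, combined with the explicit Galois action rules (\ref{galois action rules 1}) and (\ref{galois action rules 2}). Throughout one uses $f_4=f_3|_{X(2)\oplus\ff}$ and works in the quotient $\Nur(p+1)$, i.e.\ modulo $U_{\hat N_0}^{(p+1)}$ componentwise.

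Equality (\ref{f4three}) is immediate: since $a$ acts on the $d$-tuple componentwise via $1\times a$, one obtains
\[
T_a\cdot[\gamma,\ldots,\gamma]=[\gamma^{T_a},\ldots,\gamma^{T_a}]=[\gamma]^{T_a}.
\]
For (\ref{f4two}) I will iterate the rule (\ref{galois action rules 2}) on $[\theta_1,1,\ldots,1]$. Because $\theta_1\in M$ and $b|_M=1$, each application of $b$ cyclically shifts the unique non-trivial entry $\theta_1$ one position to the right, with no extra $F_0$-factor arising because the shifted entry is just $1$. Summing the vectors $b^i\cdot[\theta_1,1,\ldots,1]$ over $i=0,\dots,d-1$ therefore fills every slot exactly once and yields $[\theta_1,\ldots,\theta_1]=[\theta_1]$.

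The main work lies in (\ref{f4one}). The componentwise $a$-action gives $(a-1)\cdot[\theta_1,1,\ldots,1]=[\theta_1^{a-1},1,\ldots,1]$, while (\ref{galois action rules 2}) produces
\[
(b-1)\cdot[\gamma,\ldots,\gamma]=[\gamma^{bF_0-1},\gamma^{b-1},\ldots,\gamma^{b-1}].
\]
Since $b$ and $F_0$ both restrict to the identity on $M$, they lie in the procyclic (hence abelian) group $\gal(N_0/M)\cong\gal(\Kur/K)$ and therefore commute on $N_0$. Writing $\gamma^{bF_0-1}=\gamma^{b-1}\cdot(\gamma^{F_0-1})^b$ and invoking Lemma~\ref{F0-1surjective} together with $(\theta_1^{a-1})^b=\theta_1^{a-1}$ (which follows from $b|_M=1$ and $ab=ba$), I deduce $\gamma^{bF_0-1}\equiv\gamma^{b-1}\theta_1^{a-1}\pmod{U_{\hat N_0}^{(p+1)}}$. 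Dividing $[\theta_1^{a-1},1,\ldots,1]$ by the resulting vector makes the $\theta_1^{a-1}$ factors cancel in the first slot, and leaves the constant vector $[\gamma^{1-b},\ldots,\gamma^{1-b}]=[\gamma]^{1-b}$.

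The only delicate step is the congruence bookkeeping for the first component in (\ref{f4one}); the required cancellation rests on three simple facts, namely the commutation of $b$ and $F_0$ on $N_0$, the $b$-invariance of $\theta_1$, and the approximation $\gamma^{F_0-1}\equiv\theta_1^{a-1}\pmod{U_{\hat N_0}^{(p+1)}}$ supplied by Lemma~\ref{F0-1surjective}.
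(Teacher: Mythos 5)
Your proof is correct and follows essentially the same route as the paper, whose one-line proof cites exactly the ingredients you use: the action rules (\ref{galois action rules 1})--(\ref{galois action rules 2}), the $b$-invariance of $\theta_1$, and the identity $\theta_1^{a-1}=(\theta_1^{a-1})^b\equiv\gamma^{b(F_0-1)}$ supplied by Lemma~\ref{F0-1surjective}. Your careful bookkeeping of the first component in (\ref{f4one}) is precisely the ``straightforward computation'' the authors leave to the reader.
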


\begin{proof} 
Straightforward computation using (\ref{galois action rules 1}), (\ref{galois action rules 2}), 
$\theta_1^b = \theta_1$ and $[\theta_1,1,\dots,1]^{a-1} =[\theta_1^b,1,\dots,1]^{a-1} = [\gamma^{b(F_0-1)},1,\dots,1]$.
\end{proof}

We write $\hat{f_4} \colon X(2)_p \oplus \ff_p \lra N(p+1)_p$ for the $p$-completion of $f_4$. For an element $\beta \in \OKG$
we write $\beta = \sum_{k=1}^m \lambda_k \alpha_k$ with uniquely determined $\lambda_k \in \ZpG$ and  according to (\ref{ff iso})
we set $\hat{f_4}(\beta w_j) := \prod_{k=1}^m f_4(v_{k,j})^{\lambda_k}$.

\begin{lemma}\label{lemma3Bley} 
Let $\beta\in \oo_{K}[G]$. Then we have for $j = 0, \ldots, p-1$
\[
\hat{f_4}(\beta w_j)\equiv 1+(a-1)^j\beta\theta \pmod{U_{N}^{(j+2)}}.\]
\end{lemma}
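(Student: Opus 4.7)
The plan is to reduce the multiplicative congruence to an additive identity by means of the truncated logarithm $1+x\mapsto x$, which induces a $\Z_p[G]$-equivariant isomorphism
\[
U_N^{(j+1)}/U_N^{(j+2)} \;\stackrel{\sim}\lra\; \p_N^{j+1}/\p_N^{j+2}
\]
valid for every $j\ge 0$. Once this is in place, the statement becomes the obvious identity $\sum_{k=1}^m \lambda_k\alpha_k(a-1)^j\theta=\beta(a-1)^j\theta$ obtained from the decomposition $\beta=\sum_k\lambda_k\alpha_k$.

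First I would check that each factor $f_4(v_{k,j})=1+\alpha_k(a-1)^j\theta$ actually lies in $U_N^{(j+1)}$. Since $\theta_2\in\calO_{K'}^\times$, one has $v_N(\theta)=v_N(\theta_1)=1$; as $j\le p-1$, the valuations stay in the range $\{1,\ldots,p-1\}$, so an iterated application of Lemma \ref{lemma1Bley} to $\theta$ gives $v_N((a-1)^j\theta)=j+1$, and multiplying by the unit $\alpha_k$ preserves the valuation. Next I would establish the truncated logarithm. For $1+x,1+y\in U_N^{(j+1)}$ one has $(1+x)(1+y)=1+x+y+xy$ with $v_N(xy)\ge 2(j+1)\ge j+2$, so $1+x\mapsto x$ is a well-defined group isomorphism from $U_N^{(j+1)}/U_N^{(j+2)}$ to $\p_N^{j+1}/\p_N^{j+2}$; $G$-equivariance is immediate. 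To extend to $\Z_p[G]$-equivariance I would first use the binomial expansion $(1+x)^n=1+nx+\binom{n}{2}x^2+\cdots$ to get $(1+x)^n\equiv 1+nx\pmod{U_N^{(j+2)}}$ for $n\in\Z$, then invoke continuity (using that $U_N^{(1)}$ is a pro-$p$ group) to extend to $n\in\Z_p$, and finally combine with $G$-equivariance to obtain
\[
(1+x)^\lambda \equiv 1+\lambda x \pmod{U_N^{(j+2)}}
\]
for every $\lambda\in\Z_p[G]$ and every $x\in\p_N^{j+1}$.

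Applying this termwise to the definition of $\hat{f_4}$ I would then compute
\[
\hat{f_4}(\beta w_j)=\prod_{k=1}^m\bigl(1+\alpha_k(a-1)^j\theta\bigr)^{\lambda_k}\equiv 1+\sum_{k=1}^m\lambda_k\alpha_k(a-1)^j\theta\pmod{U_N^{(j+2)}},
\]
and since $G$ is abelian the sum on the right equals $(a-1)^j\beta\theta$, which is the claim. There is no serious obstacle here; the only bookkeeping point worth spelling out is the passage from integer exponents to $\Z_p[G]$-exponents, which is a standard continuity argument, and the verification that the valuations stay in the range $\{1,\ldots,p\}$ where Lemma \ref{lemma1Bley} applies — both are automatic in the range $0\le j\le p-1$ prescribed by the lemma.
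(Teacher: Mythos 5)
Your proof is correct and follows essentially the same route as the paper: both rest on the $\Z_p[G]$-equivariant isomorphism $\p_N^{n}/\p_N^{n+1}\simeq U_N^{(n)}/U_N^{(n+1)}$ induced by $x\mapsto 1+x$ together with the valuation computation $v_N((a-1)^j\theta)=j+1$ from Lemma \ref{lemma1Bley}. You merely spell out the continuity argument for $\Z_p[G]$-exponents that the paper leaves implicit.
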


\begin{proof}
As above we write  $\beta = \sum_{k=1}^m \lambda_k \alpha_k$. We note that that for $n \ge 1$ the map $x \mapsto 1+x$ induces an 
isomorphism $\p_N^n / \p_N^{n+1} \simeq U_N^{(n)} / U_N^{(n+1)}$ of $\ZpG$-modules. 
By Lemma \ref{lemma1Bley} we have $v_N((a-1)^j \theta) = j+1$. Therefore,
\[\begin{split}
\hat{f_4}(\beta w_j)&=\prod_{k=1}^m(1+\alpha_k(a-1)^j\theta)^{\lambda_k}\\
&\equiv1+\sum_{k=1}^m\lambda_k\alpha_k(a-1)^j\theta\pmod{U_{N}^{(j+2)}}\\
&\equiv 1+\beta(a-1)^j\theta\pmod{U_{N}^{(j+2)}}.\end{split}\]
\end{proof}

\begin{lemma}\label{f4surj1}
For $j=0,\dots,p$, any element of $U_N^{(j+1)}/U_N^{(p+1)}$ is the image under $\hat{f_4}$ of an element in $(\ff_{\geq j})_p$.
\end{lemma}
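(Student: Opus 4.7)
The plan is to proceed by descending induction on $j$, starting from the trivial base case $j = p$ where both the quotient $U_N^{(p+1)}/U_N^{(p+1)}$ and the module $(\ff_{\ge p})_p$ are zero.

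For the induction step, I assume the statement at level $j+1$ and fix $u \in U_N^{(j+1)}$ with $0 \le j \le p-1$. Writing $u = 1 + y$ with $y \in \p_N^{j+1}$, Lemma \ref{lemma2Bley} yields a decomposition $y = p\beta_1\theta + (a-1)^j\beta_2\theta$ with $\beta_1, \beta_2 \in \OKG$. The key observation is that $v_N(p\beta_1\theta) \ge v_N(p) + v_N(\theta) = p+1$, so $p\beta_1\theta \in \p_N^{p+1} \subseteq \p_N^{j+2}$. Combined with Lemma \ref{lemma3Bley} applied at the index $j$, this gives
\[
u \equiv 1 + (a-1)^j\beta_2\theta \equiv \hat{f_4}(\beta_2 w_j) \pmod{U_N^{(j+2)}},
\]
so that $u \cdot \hat{f_4}(\beta_2 w_j)^{-1} \in U_N^{(j+2)}$. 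Applying the induction hypothesis at level $j+1$ produces $\xi \in (\ff_{\ge j+1})_p$ with $\hat{f_4}(\xi) \equiv u \cdot \hat{f_4}(\beta_2 w_j)^{-1} \pmod{U_N^{(p+1)}}$. Then $\xi + \beta_2 w_j$ lies in $(\ff_{\ge j})_p$ and has image congruent to $u$ modulo $U_N^{(p+1)}$, since $\hat{f_4}$ is a homomorphism.

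There is no serious obstacle: the argument rests entirely on the two preceding lemmas together with the simple valuation estimate $v_N(p\theta) = p+1$, which ensures that the $p\beta_1\theta$ term of the decomposition is absorbed into $U_N^{(j+2)}$ precisely when $j \le p-1$. At the boundary $j = p-1$, the target $U_N^{(j+2)}$ coincides with $U_N^{(p+1)}$, and the induction hypothesis is invoked at the trivial level $j+1 = p$, closing the loop. The only delicate point to verify carefully is that $\beta_1$ and $\beta_2$ can indeed be taken in $\OKG$ (rather than in $\ZpG$), which is exactly the content of part (b) of Lemma \ref{lemma2Bley}.
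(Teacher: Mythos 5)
Your proof is correct and follows essentially the same route as the paper: descending induction from the trivial case $j=p$, the decomposition $\p_N^{j+1}=(p,(a-1)^j)\theta$ from Lemma \ref{lemma2Bley}, the observation that the $p$-part lies in $\p_N^{p+1}$, and Lemma \ref{lemma3Bley} to identify the remaining factor as $\hat{f_4}(\beta_2 w_j)$ modulo $U_N^{(j+2)}$. The only difference is presentational (you spell out the valuation estimate and the recombination $\xi+\beta_2 w_j$ explicitly), so there is nothing to add.
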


\begin{proof}

For $j=p$, $\ff_{\geq p}=\{0\}$ and $U_N^{(p+1)}/U_N^{(p+1)}=\{0\}$, so the result is trivial.

We assume the result for $j+1$ and proceed by descending induction. If $x\in U_N^{(j+1)}/U_N^{(p+1)}$, 
then $x=1+\mu p\theta+\nu(a-1)^j\theta$ for some $\mu,\nu\in\oo_K[G]$ by Lemma \ref{lemma2Bley}.
Since $\mu p\theta\in\p_N^{p+1}$, Lemma \ref{lemma3Bley} implies 
\[
x\equiv 1+\nu(a-1)^j\theta\equiv \hat{f_4}(\nu w_j)\pmod{U_N^{j+2}}.
\]
This means that $x$ is the product of an element in the image of $(\ff_{\geq j})_p$ and an element in $U_N^{(j+2)}/U_N^{(p+1)}$, 
which is by assumption in the image of $(\ff_{\geq j+1})_p \subseteq (\ff_{\geq j})_p$. 
\end{proof}

After these preparations we are ready to provide the proof of Lemma \ref{surjectivity lemma}.

\begin{proof}[Proof of Lemma \ref{surjectivity lemma}.]
We recall the properties of $\gamma$ described in Lemma \ref{F0-1surjective}.
Since $a$ is in the inertia group, by (\ref{f4three}) we obtain
$f_4(T_az_2)=[\gamma]^{T_a}\equiv [\gamma]^p\pmod{\p_N}$.
Since $\gamma\equiv\zeta_{q^d-1}\pmod{\p_{N_0}}$, its class is a generator of $U_N/U_N^{(1)}$. Since $p$ is co-prime to the order $q^{d}-1$ of $U_N/U_N^{(1)}$, we conclude that the projection of $f_4(X(2))$ onto $N(1)$ contains the torsion subgroup $U_N/U_N^{(1)}$ of $N(1)$.

By (\ref{f4two}) we obtain
$f_4(T_bz_1)=[\theta_1]$ and recall that $\theta_1$ is a prime element in $N$.
We conclude that any element of $N(p+1)$ is the product of an element in the image of $f_4$ and an element in $U_N^{(1)}/U_N^{(p+1)}$.
It therefore remains to prove that $U_N^{(1)}/U_N^{(p+1)}$ is also in the image of $f_4$. Since $U_N^{(1)}/U_N^{(p+1)}$ is a finite $p$-group
this follows immediately from Lemma \ref{f4surj1}.
\end{proof}

\end{subsection}

\begin{subsection}{The kernel of $\hat{f_4}$}\label{kernel of f_4}
In order to compute a representative of $E(\exp(\calL))_p$ we have to compute a $\ZpG$-basis of
$\ker(f_4)_p = \ker(\hat{f_4})$. As a first step in this direction we construct certain explicit elements in $\ker(\hat{f_4})$
and show that they form a complete set of generators. We then manipulate this set of generators in order to obtain a basis. The main result
is summarized in Proposition \ref{pm+1generators}.

\begin{lemma}\label{deft1}
Let $\tilde m$ be an integer such that $m\tilde m \equiv 1 \pmod d$.
Set 
\begin{eqnarray*}\tilde t_1 &:=& (a-1)z_1-(b-1)z_2+
\left(\sum_{i=2}^{m}\alpha_{i}b^{1-(i-2)\tilde m}+\left(\alpha_1-\sum_{i=2}^{m}\alpha_{i}\right)b^{\tilde m}\right)w_0.
\end{eqnarray*}
 Then there exists $y_1\in (\ff_{\geq 1})_p$, such that $t_1 := \tilde t_1 + y_1 \in \ker(\hat{f_4})$.
\end{lemma}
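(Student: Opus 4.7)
The plan is to compute $\hat{f_4}(\tilde t_1)$ modulo $U_N^{(2)}$, verify that the prescribed $w_0$-coefficient makes it land in $U_N^{(2)}/U_N^{(p+1)}$, and then invoke Lemma \ref{f4surj1} to extract $y_1$.  By (\ref{f4one}) of Lemma \ref{f4calc}, $\hat{f_4}((a-1)z_1-(b-1)z_2)=[\gamma]^{1-b}$, which in $N(p+1)$ represents $\gamma^{1-b}\in N_0^\times$. The expansion $\gamma\equiv\zeta_{q^d-1}(1+x_2\theta_1)\pmod{\p_{N_0}^2}$ of Lemma \ref{F0-1surjective}, together with the fact that $b$ fixes $\theta_1$, gives
\[
\gamma^{1-b}\equiv\zeta_{q^d-1}^{1-b}\bigl(1+(x_2-x_2^b)\theta_1\bigr)\pmod{\p_{N_0}^2}.
\]
Since $\gcd(q^d-1,p)=1$, the residue-field units $\mu_{q^d-1}(N)\cong\F_{q^d}^\times$ are killed in the $p$-completion $N(p+1)_p=\Zp\oplus U_N^{(1)}/U_N^{(p+1)}$; in particular $\zeta_{q^d-1}^{1-b}$ disappears, and the class of $\hat{f_4}((a-1)z_1-(b-1)z_2)$ modulo $U_N^{(2)}$ is represented by $1+(x_2-x_2^b)\theta_1$. (Using $\bar\alpha_1\in\F_q$ together with (\ref{eq:1000}), one checks that $(x_2-x_2^b)\bmod\p_{N_0}$ does lie in $\F_{q^d}$, so this truly represents an element of $U_N^{(1)}/U_N^{(2)}$.) By Lemma \ref{lemma3Bley} the $w_0$-term contributes $\hat{f_4}(\beta w_0)\equiv 1+\beta\theta_1\theta_2\pmod{U_N^{(2)}}$, whence
\[
\hat{f_4}(\tilde t_1)\equiv 1+\bigl((x_2-x_2^b)+\beta\theta_2\bigr)\theta_1\pmod{U_N^{(2)}}.
\]

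The heart of the proof is the residue-field identity $(x_2-x_2^b)+\beta\theta_2\equiv 0\pmod{\p_N}$. The key observation is that, using $\alpha_i=\alpha_1 A^{f^{i-2}}$ from (\ref{choice of alpha}) together with $\Tr_{K/\Q_p}(A)=1$, the prescribed coefficient collapses to the compact form
\[
\beta=\alpha_1\sum_{j=0}^{m-1}A^{f^j}b^{1-j\tilde m}.
\]
The Galois action becomes tractable: since $b$ restricts to $\tilde K'$ as $\phi^{-m}$, where $\phi$ is the Frobenius of $\tilde K'/\Q_p$, and $m\tilde m\equiv 1\pmod d$, we obtain $b^{1-j\tilde m}|_{\tilde K'}=\phi^{j-m}$. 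The identity then reduces to matching $\sum_j\bar A^{f^j}\phi^{j-m}(\bar\theta_2)$ with $\bar\alpha_1^{-1}(\bar x_2^b-\bar x_2)$ in $\F_{q^d}$, which is verified using the Artin--Schreier-type defining relation of Lemma \ref{xp-x+Atheta2}.

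Once the cancellation is established, $\hat{f_4}(\tilde t_1)\in U_N^{(2)}/U_N^{(p+1)}$. Lemma \ref{lemma3Bley} gives $\hat{f_4}((\ff_{\geq 1})_p)\subseteq U_N^{(2)}/U_N^{(p+1)}$, and Lemma \ref{f4surj1} applied with $j=1$ gives the reverse inclusion, so we can choose $y_1\in(\ff_{\geq 1})_p$ with $\hat{f_4}(y_1)=\hat{f_4}(\tilde t_1)^{-1}$; then $t_1:=\tilde t_1+y_1\in\ker(\hat{f_4})$. The sole genuine obstacle is the residue-field identity of the preceding paragraph: it is precisely what forces the specific exponents $1-(i-2)\tilde m$ and the compensating $\tilde m$-term on us.
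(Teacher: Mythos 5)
Your proposal is correct and follows essentially the same route as the paper: compute $\hat{f_4}(\tilde t_1)$ modulo $\p_{N_0}^2$, reduce the vanishing to a residue-field identity coming from the Artin--Schreier relation $(x_2/\alpha_1)^p-(x_2/\alpha_1)\equiv -A\theta_2$ of Lemma \ref{xp-x+Atheta2} (the paper carries this out by telescoping $(x_2/\alpha_1)^q-(x_2/\alpha_1)\equiv-\sum_{i}(A\theta_2)^{p^i}$, which is exactly the verification you defer), and then apply Lemma \ref{f4surj1} with $j=1$ to produce $y_1$. Your compact rewriting $\beta=\alpha_1\sum_{j=0}^{m-1}A^{f^j}b^{1-j\tilde m}$ and the identification $b^{1-j\tilde m}|_{\tilde K'}=\phi^{j-m}$ are a clean repackaging of the paper's term-by-term cancellation, not a different argument.
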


\begin{proof}
We calculate $\hat{f_4}(\tilde t_1)$ modulo $\p_N^2$. First we recall that $\gamma\equiv\zeta_{q^d-1}(1+x_2\theta_1)\pmod{\p_{N_0}^2}$ 
by Lemma \ref{F0-1surjective}. Note also that for any integer $s \ge 2$ one has $\zeta_{q^d - 1} = 1$ in the $p$-completion $N(s)_p$ of $N(s)$.
By the definition of $\hat f_4$, Lemma \ref{f4calc} and Lemma \ref{lemma3Bley},
\[
\begin{split}
\hat{f_4}(\tilde t_1)
&\equiv\gamma^{1-b}(1+\alpha_1{b^{\tilde m}}\theta)\prod_{i=2}^m\left((1+\alpha_{i}b^{1-(i-2)\tilde m}\theta)(1+\alpha_ib^{\tilde m}\theta)^{-1}\right)\pmod{\p_{N_0}^2}\\
&\equiv (1+x_2\theta)^{1-b}\left(1+\alpha_1\theta^{b^{\tilde m}}+\sum_{i=0}^{m-2}\alpha_{i+2}\theta^{b^{1-i\tilde m}}-
\sum_{i=0}^{m-2}\alpha_{i+2}\theta^{b^{\tilde m}}\right)\pmod{\p_{N_0}^2}.
\end{split}\]
Now we see that
\begin{equation}\label{zz1}
\begin{split}
(1+x_2\theta_1)^{1-b}&\equiv(1+x_2\theta_1)(1+x_2\theta_1)^{-b}\equiv(1+x_2\theta_1)(1-x_2^b\theta_1)\pmod{\p_{N_0}^2}\\
&\equiv 1+x_2\theta_1-x_2^b\theta_1\equiv 1+\left(\left(\frac{x_2}{\alpha_1}\right)^F-\frac{x_2}{\alpha_1}\right)^b\alpha_1\theta_1\pmod{\p_{N_0}^2}\\
&\equiv 1+\left(\left(\frac{x_2}{\alpha_1}\right)^q-\frac{x_2}{\alpha_1}\right)^b\alpha_1\theta_1\pmod{\p_{N_0}^2}.
\end{split}
\end{equation}
By the choice of the basis $\alpha_1, \ldots, \alpha_m$ in (\ref{choice of alpha}) we have 
$A^{f^i}=\frac{\alpha_{i+2}}{\alpha_1}$ for $i=0,\dots, m-2$ and $A^{f^{m-1}}=1-\sum_{i=0}^{m-2}A^{f^i}=1-\sum_{i=0}^{m-2}\frac{\alpha_{i+2}}{\alpha_1}$. So we get
\[\begin{split}\left(\frac{x_2}{\alpha_1}\right)^q-\frac{x_2}{\alpha_1}&\equiv\sum_{i=0}^{m-1}\left(\left(\frac{x_2}{\alpha_1}\right)^{p^{i+1}}-\left(\frac{x_2}{\alpha_1}\right)^{p^i}\right)\pmod{\p_{N_0}}\\
&\equiv\sum_{i=0}^{m-1}\left(\left(\frac{x_2}{\alpha_1}\right)^{p}-\left(\frac{x_2}{\alpha_1}\right)\right)^{p^i}\pmod{\p_{N_0}}\\
&\stackrel{(i)}\equiv -\sum_{i=0}^{m-1}\left(A\theta_2\right)^{p^i}\pmod{\p_{N_0}}\\
&\stackrel{(ii)}\equiv -\sum_{i=0}^{m-2} A^{p^i}\theta_2^{p^{im\tilde m}}-A^{p^{m-1}}\theta_2^{p^{(m-1)m\tilde m}}\pmod{\p_{N_0}}\\
&\equiv -\sum_{i=0}^{m-2} \frac{\alpha_{i+2}}{\alpha_1}\theta_2^{q^{i\tilde m}}-\left(1-\sum_{i=0}^{m-2}\frac{\alpha_{i+2}}{\alpha_1}\right)\theta_2^{q^{1-\tilde m}}\pmod{\p_{N_0}}.
\end{split}\]
The congruence (i) follows from our choice of $x_2$ and (ii) is immediate from $\theta_2 \in \tilde{K'}$.

Combining the last congruence with the computation in (\ref{zz1}) and recalling that $\theta_1^b=\theta_1$ we obtain
\[\begin{split}(1+x_2\theta_1)^{1-b}&\equiv 1-\left(\sum_{i=0}^{m-2} \frac{\alpha_{i+2}}{\alpha_1}\theta_2^{q^{i\tilde m}}+\left(1-\sum_{i=0}^{m-2}\frac{\alpha_{i+2}}{\alpha_1}\right)\theta_2^{q^{1-\tilde m}}\right)^b\alpha_1\theta_1\pmod{\p_{N_0}^2}\\
&\equiv 1-\sum_{i=0}^{m-2} \alpha_{i+2}\theta^{b^{1-i\tilde m}}-\alpha_1\theta^{b^{\tilde m}}+\sum_{i=0}^{m-2}\alpha_{i+2}\theta^{b^{\tilde m}}\pmod{\p_{N_0}^2}.
\end{split}\]
So we conclude that
\[\hat{f_4}(\tilde t_1)\equiv 1-\left(\alpha_1\theta^{b^{\tilde m}}+\sum_{i=0}^{m-2}\alpha_{i+2}\theta^{b^{1-i\tilde m}}-\sum_{i=0}^{m-2}\alpha_{i+2}\theta^{b^{\tilde m}}\right)^2\equiv 1\pmod{\p_{N_0}^2}.\]
Therefore $\hat{f_4}(\tilde t_1)^{-1}\in U_N^{(2)}/U_N^{(p+1)}$ and by Lemma \ref{f4surj1} there exists $y_1\in (\ff_{\geq 1})_p$ 
such that $\hat{f_4}(y_1)\equiv \hat f_4(\tilde t_1)^{-1}\pmod{\p_N^{p+1}}$, i.e. $\tilde t_1+y_1\in\ker (\hat{f_4})$.
\end{proof}

\begin{lemma}\label{deft2}
The element
\[
t_2:=T_az_2-\beta w_{p-1} \text{ with } \beta = \begin{cases} \alpha_1, & \text{if } m = 1, \\
                                                              \alpha_2, & \text{if } m > 1,
                                                \end{cases}
\]
is in the kernel of $\hat{f_4}$.
\end{lemma}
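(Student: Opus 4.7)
The plan is to show directly that $\hat{f_4}(T_a z_2) \equiv \hat{f_4}(\beta w_{p-1})$ in $N(p+1)_p$. First, by Lemma \ref{lemma3Bley} one has $\hat{f_4}(\beta w_{p-1}) \equiv 1 + \beta(a-1)^{p-1}\theta \pmod{U_N^{(p+1)}}$. Since $\theta_2 \in \tilde K'$ is fixed by $a$, we may pull it out of $(a-1)^{p-1}$, and Lemma \ref{norm_power_a-1} (namely $(a-1)^{p-1}\theta_1 \equiv T_a\theta_1 = p \pmod{\p_N^{p+1}}$) then gives $\hat{f_4}(\beta w_{p-1}) \equiv 1 + p\beta\theta_2 \pmod{U_N^{(p+1)}}$.

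For the other side, $f_4(T_a z_2) = [\gamma]^{T_a} = [\gamma^{T_a}, \ldots, \gamma^{T_a}]$, and since the fixed field of $\langle a \rangle$ in $N_0$ is $\Kur$, we have $\gamma^{T_a} = \mathcal N_{N_0/\Kur}(\gamma)$. Decompose $\gamma = \zeta_{q^d-1}(1 + x_2\theta_1)\cdot v$ with $v \in U_{N_0}^{(2)}$, which is possible by Lemma \ref{F0-1surjective}. Since $N_0/\Kur$ is also weakly and wildly ramified of degree $p$, the norm estimate from \cite[Sec.~V.6, Prop.~8]{SerreLocalFields} (as used in the proof of Lemma \ref{nicenorm}) yields $\mathcal N_{N_0/\Kur}(v) \in U_{\Kur}^{(2)} \subseteq U_{N_0}^{(2p)} \subseteq U_{N_0}^{(p+1)}$, so that $\gamma^{T_a} \equiv \zeta_{q^d-1}^p \cdot \mathcal N_{N_0/\Kur}(1 + x_2\theta_1) \pmod{U_{N_0}^{(p+1)}}$.

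The heart of the argument is to prove $\mathcal N_{N_0/\Kur}(1 + x_2\theta_1) \equiv 1 + p\beta\theta_2 \pmod{\p_{N_0}^{p+1}}$. Expanding the norm as $1 + \sum_{j=1}^p x_2^j \sigma_j(\theta_1)$, where $\sigma_j$ denotes the $j$-th elementary symmetric polynomial in the $a$-conjugates of $\theta_1$, the edge terms are controlled by $\sigma_1 = T_a\theta_1 = p$ and by Lemma \ref{normtrace}, which gives $\sigma_p = \mathcal N_{M/K}(\theta_1) \equiv -\alpha_1^{1-p}p \pmod{\p_M^{p+1}}$. The key technical input is the bound $v_K(\sigma_j(\theta_1)) \geq 2$ for $2 \leq j \leq p-1$, which forces all middle terms $x_2^j\sigma_j$ into $\p_{N_0}^{p+1}$. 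This follows from a Newton polygon analysis of the minimal polynomial $f$ of $\theta_1$ over $K$: the weak ramification hypothesis forces $v_M(f'(\theta_1)) = 2(p-1)$, and comparing the valuations of the terms of $f'(\theta_1)$ pins down $v_K(a_j) \geq 2$ for all non-edge coefficients, equivalently $v_K(\sigma_j) \geq 2$. The surviving contribution $p(x_2 - x_2^p \alpha_1^{1-p})$ is then rewritten via the defining congruence $(x_2/\alpha_1)^p - x_2/\alpha_1 \equiv -A\theta_2 \pmod{\p_{\Kur}}$ (from the choice of $x_2$): this gives $x_2 - x_2^p\alpha_1^{1-p} \equiv A\theta_2\alpha_1 \pmod{\p_{\Kur}}$, and multiplying by $p$ (using $p\p_{\Kur} = \p_{\Kur}^2 \subseteq \p_{N_0}^{p+1}$) yields $\mathcal N_{N_0/\Kur}(1 + x_2\theta_1) \equiv 1 + p\alpha_1 A \theta_2 = 1 + p\beta\theta_2 \pmod{\p_{N_0}^{p+1}}$, where $\beta = \alpha_1 A$ holds in both cases (since $A = 1$ when $m = 1$, and $\alpha_2 = \alpha_1 A$ when $m > 1$).

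Altogether, $\gamma^{T_a} \equiv \zeta_{q^d-1}^p(1 + p\beta\theta_2) \pmod{\p_{N_0}^{p+1}}$. Since $\zeta_{q^d-1}^p$ is prime-to-$p$ torsion and is therefore killed in the $p$-completion $N(p+1)_p$, we obtain $\hat{f_4}(T_a z_2) = 1 + p\beta\theta_2 = \hat{f_4}(\beta w_{p-1})$ in $N(p+1)_p$, which gives $t_2 \in \ker(\hat{f_4})$. The main obstacle is the Newton polygon bound $v_K(\sigma_j(\theta_1)) \geq 2$ for $2 \leq j \leq p-1$, which is precisely where the weakly (as opposed to merely tamely) ramified hypothesis enters in an essential way.
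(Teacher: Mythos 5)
Your argument is correct and follows the same route as the paper: compute $\hat f_4(T_az_2)=[\gamma]^{T_a}=\mathcal N_{N_0/\Kur}(\gamma)$, discard $\zeta_{q^d-1}$ in the $p$-completion and the $U_{N_0}^{(2)}$-part of $\gamma$ via $\mathcal N_{N_0/\Kur}(U_{N_0}^{(2)})\subseteq U_{\Kur}^{(2)}\subseteq U_{N_0}^{(p+1)}$, reduce $\mathcal N_{N_0/\Kur}(1+x_2\theta_1)$ to $1+\Tr(x_2\theta_1)+\mathcal N(x_2\theta_1)$, and match with $\hat f_4(\beta w_{p-1})\equiv 1+p\beta\theta_2$ using the defining congruence of $x_2$. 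The only difference is that where the paper simply cites \cite[Sec.~V.3, Lemmas 4 and 5]{SerreLocalFields} to kill the middle terms of the norm, you reprove that fact via the bound $v_K(\sigma_j)\ge 2$ for $2\le j\le p-1$ obtained from $v_M(f'(\theta_1))=2(p-1)$; this is a valid, self-contained substitute for the citation.
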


\begin{proof}
Since $\zeta_{q^d - 1} = 1$ in $N(p+1)_p$ and $\gamma \equiv \zeta_{q^d - 1} (1 + x_2\theta_1) \pmod{\p_{N_0}^2}$ the formulae in Lemma \ref{f4calc} and Lemma \ref{lemma3Bley} imply
\[
\begin{split}
\hat{f_4}(t_2)&\equiv\gamma^{T_a}(1-\beta(a-1)^{p-1}\theta)\pmod{\p_{N_0}^{p+1}}\\
&\equiv\mathcal N_{N_0 / K_{\mathrm{nr}}}(1+x_2\theta_1)(1-\beta(a-1)^{p-1}\theta)\pmod{\p_{N_0}^{p+1}}.
\end{split}
\]
Note that by \cite[Sec.~V.6, Prop.~8]{SerreLocalFields} we know that
$\mathcal N_{N_0/\Kur}U_{N_0}^{(2)}\subseteq U_{\Kur}^{(2)}\subseteq U_{N_0}^{(2p)}\subseteq U_{N_0}^{(p+1)}$ so that it suffices to work with $\gamma$ modulo
$\p_{N_0}^2$.
Using Lemma \ref{norm_power_a-1}, \cite[Sec.~V.3, Lemma 5]{SerreLocalFields},  the fact that 
$x_2\in \oo_{\Kur}$ and Lemma \ref{normtrace}, we get
\[\begin{split}\hat{f_4}(t_2)&\equiv\mathcal N_{N_0 / K_{\mathrm{nr}}}(1+x_2\theta_1)(1-\beta\theta_2p)\pmod{\p_{N_0}^{p+1}}\\
&\equiv (1+\Tr_{N_0/\Kur}(x_2\theta_1)+\mathcal N_{N_0/\Kur}(x_2\theta_1))(1-\beta\theta_2p)\pmod{\p_{N_0}^{p+1}}\\
&\equiv (1+x_2p-x_2^p \alpha_1^{1-p}p)(1-\beta\theta_2p)\pmod{\p_{N_0}^{p+1}}\\
&\equiv 1+(x_2-x_2^p \alpha_1^{1-p}-\beta\theta_2)p\pmod{\p_{N_0}^{p+1}}.\end{split}\]
By the choice of $x_2$ made after Lemma \ref{xp-x+Atheta2} we have
\[x_2^p\alpha_1^{1-p}=\alpha_1\left(\frac{x_2}{\alpha_1}\right)^p\equiv\alpha_1\cdot\left(\frac{x_2}{\alpha_1}-A\theta_2\right)\equiv x_2-A\alpha_1\theta_2\equiv x_2-\beta\theta_2\pmod{\p_{\Kur}},\]
so that $\hat{f_4}(t_2) \equiv 1 \pmod{\p_{N_0}^{p+1}}$.
\end{proof}

\begin{lemma}\label{genkerf4X2}
The elements $t_1$ and $t_2$ generate $\left( \ker \hat{f_4} + \ff_p \right)/\ff_p$ as a $\Z_p[G]$-module.
\end{lemma}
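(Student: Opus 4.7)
My plan is to identify the subgroup $(\ker \hat{f_4}+\ff_p)/\ff_p$ explicitly as a subgroup of $X(2)_p$ via the natural isomorphism $(X(2)_p\oplus\ff_p)/\ff_p\cong X(2)_p$ coming from the projection, and then to check that the images of $t_1,t_2$ generate it over $\Z_p[G]$. Under this identification the subgroup in question corresponds to
\[
S := \{\, x \in X(2)_p \,:\, \hat{f_4}(x) \in \hat{f_4}(\ff_p)\,\}.
\]
By Lemmas \ref{deft1} and \ref{deft2} the elements $t_1,t_2$ lie in $\ker \hat{f_4}$, and by construction they project modulo $\ff_p$ to $g_1:=(a-1)z_1-(b-1)z_2$ and $g_3:=T_az_2$ respectively. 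It therefore suffices to show $S = \Z_p[G]g_1 + \Z_p[G]g_3$.

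The first step is to identify $\hat{f_4}(\ff_p)$. Each generator $f_4(v_{k,j})=1+\alpha_k(a-1)^j\theta$ lies in $U_N^{(1)}$, so $\hat{f_4}(\ff_p)\subseteq U_N^{(1)}/U_N^{(p+1)}$, and the reverse inclusion is exactly Lemma \ref{f4surj1} for $j=0$. Since the order of the residue field of $N$ is coprime to $p$, this subgroup coincides with the kernel of the valuation $v_N\colon N(p+1)_p\to \Z_p$ obtained by $p$-completion of the normalized valuation of $N$. Hence $S$ is the kernel of the composition $v_N\circ \hat{f_4}|_{X(2)_p}$.

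Using the generators of $X(2)$ from Lemma \ref{X2generators}, any $x\in X(2)_p$ can be written as $x = A g_1 + B\,T_b z_1 + C g_3$ with $A,B,C\in\Z_p[G]$. The formulas in Lemma \ref{f4calc} give $\hat{f_4}(g_1)=[\gamma]^{1-b}$, $\hat{f_4}(T_bz_1)=[\theta_1]$, $\hat{f_4}(g_3)=[\gamma]^{T_a}$, and by the Galois-invariance of $v_N$ one has $v_N(z^\beta)=\epsilon(\beta)v_N(z)$ for any $\beta\in\Z_p[G]$. Since $\gamma$ is a unit and $\theta_1$ is a prime of $N$, this yields
\[
v_N\bigl(\hat{f_4}(x)\bigr) \;=\; \epsilon(A)\cdot 0 + \epsilon(B)\cdot 1 + \epsilon(C)\cdot 0 \;=\; \epsilon(B).
\]
Thus $S$ is the image in $X(2)_p$ of the set of triples $(A,B,C)\in\Z_p[G]^3$ with $\epsilon(B)=0$; this is internally consistent because any relation $Ag_1+B\,T_bz_1+Cg_3=0$ in $X(2)$ forces $A(a-1)+B\,T_b=0$, hence $d\cdot\epsilon(B)=0$, and thus $\epsilon(B)=0$ by torsion-freeness of $\Z_p$.

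Finally, to deduce that $g_1$ and $g_3$ generate $S$ over $\Z_p[G]$ it suffices to show that $B\,T_bz_1$ lies in $\Z_p[G]g_1$ whenever $B\in I_G$. Writing $B=\lambda(a-1)+\mu(b-1)$, the identity $(b-1)T_b=0$ kills the $\mu$-term, and the abelianness of $G$ together with the same identity gives
\[
(a-1)T_b z_1 = T_b(a-1)z_1 = T_b\bigl((a-1)z_1-(b-1)z_2\bigr)+T_b(b-1)z_2 = T_b\cdot g_1.
\]
Hence $B\,T_bz_1=\lambda T_b\cdot g_1\in\Z_p[G]g_1$, which completes the generation claim. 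The only mildly subtle step is identifying $\hat{f_4}(\ff_p)$ with the valuation kernel, which is where Lemma \ref{f4surj1} is essential; the rest is a short manipulation in the group ring.
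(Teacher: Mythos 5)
Your proof is correct and follows essentially the same route as the paper's: both reduce via Lemma \ref{X2generators} to the $T_bz_1$-component, use the valuation of $\hat{f_4}$ (with $[\theta_1]$ a prime and $[\gamma]^{1-b}$, $[\gamma]^{T_a}$ units) to force the coefficient of $T_bz_1$ into the augmentation ideal, and finish with the identity $(a-1)T_bz_1 = T_b\bigl((a-1)z_1-(b-1)z_2\bigr)$ together with $(b-1)T_b=0$. The only slip is cosmetic: it is the order of the \emph{multiplicative group} of the residue field, $q^d-1$, not of the residue field itself, that is coprime to $p$.
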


\begin{proof}
We write $W \sseq X(2)_p \oplus \ff_p$ for the $\ZpG$-submodule which is generated by $\ff_p$, $t_1$ and $t_2$.
For each $x\in\ker \hat{f_4}$ we have to show that $x \in W$. In the following all congruences are modulo $W$.
By Lemma \ref{X2generators} there exist $x_1,x_2,x_3\in\Z_p[G]$ such that 
\[
x \equiv  x_1((a-1)z_1-(b-1)z_2)+x_2T_bz_1+x_3T_az_2.
\]
From the definitions of $t_1$ and $t_2$ we immediately obtain
$$T_az_2 \in W \text{ and } (a-1)z_1 - (b-1)z_2 \in W.$$
Hence $x \equiv x_2T_bz_1$. Without loss of generality we may assume $x_2 \in \Z_p[a]$.
By considering $\hat{f_4}(x)$ modulo $U_N^{(1)}$ and using Lemma \ref{f4calc}, 
we see that, to kill the $[\theta_1]$, $x_2$ must be in the augmentation ideal.
Therefore there exists $x_4 \in \Z_p[a]$ such that   $x_2 = x_4 (a-1)$. Then
$x \equiv x_4 (a-1) T_bz_1 = x_4T_b \left( (a-1)z_1 - (b-1)z_2 \right) \equiv 0$.
\end{proof}

\begin{lemma}
Let $0\leq j\leq p-1$, $1\leq k\leq m$. Then there exists $\mu_{j,k}\in (\ff_{\geq j+2})_p$ such that the element
\[s_{j,k}=\alpha_k(a-1) w_j-\alpha_kw_{j+1}+\mu_{j,k}\]
is in the kernel of $\hat{f_4}$. Here $w_p$ should be interpreted as $0$.
\end{lemma}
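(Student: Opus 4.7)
The plan is to verify that the image of $\alpha_k(a-1)w_j - \alpha_k w_{j+1}$ under $\hat f_4$ lies in a sufficiently high power of $U_N^{(\cdot)}$, and then to invoke Lemma \ref{f4surj1} to produce an element $\mu_{j,k} \in (\ff_{\geq j+2})_p$ whose $\hat f_4$-image is the inverse.

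By the definition of $\hat f_4$ via the isomorphism (\ref{ff iso}) together with the formula $f_3(v_{k,j}) = 1 + \alpha_k(a-1)^j\theta$ from Lemma \ref{basic diagram},
\[
\hat f_4(\alpha_k(a-1)w_j) = (1 + \alpha_k(a-1)^j\theta)^{a-1}, \qquad
\hat f_4(\alpha_k w_{j+1}) = 1 + \alpha_k(a-1)^{j+1}\theta
\]
(the second expression being $1$ when $j = p-1$, since $w_p = 0$). Writing $y := \alpha_k(a-1)^j\theta$ and using that $\alpha_k \in \calO_K$ is fixed by $a$, so that $y^a - y = \alpha_k(a-1)^{j+1}\theta$, one computes
\[
(1+y)^{a-1} \;=\; \frac{1+y^a}{1+y} \;=\; 1 + \frac{\alpha_k(a-1)^{j+1}\theta}{1+y}.
\]
Since $v_N(y) = j+1$ by Lemma \ref{lemma1Bley}, expanding $(1+y)^{-1}$ geometrically and dividing by $1 + \alpha_k(a-1)^{j+1}\theta$ yields
\[
\hat f_4(\alpha_k(a-1)w_j - \alpha_k w_{j+1}) \equiv 1 \pmod{U_N^{(2j+3)}}.
\]
For $0 \leq j \leq p-2$ we have $2j+3 \geq j+3$, so the inverse of this element lies in $U_N^{(j+3)}/U_N^{(p+1)}$; applying Lemma \ref{f4surj1} with index $j+2$ provides the required $\mu_{j,k} \in (\ff_{\geq j+2})_p$.

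The boundary case $j = p-1$ is the point where extra care is needed. Here $(\ff_{\geq p+1})_p = 0$ forces $\mu_{p-1,k} = 0$, so one must verify directly that $\hat f_4(\alpha_k(a-1)w_{p-1}) = 1$ in $N(p+1)_p$, which by the formula above reduces to the valuation estimate $v_N((a-1)^p \theta) \geq p+1$. By Lemma \ref{norm_power_a-1} write $(a-1)^{p-1} = T_a + p\beta$ with $\beta \in \Z_p[a]$; combined with $T_a\theta_1 = p$ and $(a-1)p = 0$ this gives $(a-1)^p \theta_1 = p\beta(a-1)\theta_1$. The congruence $\theta_1^{a-1} \equiv 1 - \alpha_1\theta_1 \pmod{\p_M^2}$ fixed in Subsection \ref{subsection def und not}, together with $\theta_1^a - \theta_1 = \theta_1(\theta_1^{a-1} - 1)$, shows that $v_N((a-1)\theta_1) = 2$, hence $v_N((a-1)^p \theta_1) \geq p+2$. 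Since $v_N(\theta_2) = 0$, which follows from Lemma \ref{lemma2Bley} and $v_N(\theta_1) = 1$, we conclude $v_N((a-1)^p \theta) \geq p+2$, as required.
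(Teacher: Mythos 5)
Your proof is correct and follows essentially the same route as the paper: compute $\hat f_4$ on $\alpha_k(a-1)w_j-\alpha_kw_{j+1}$, show the result lies in $U_N^{(j+3)}$ (your exact identity $(1+y)^{a-1}=1+\tfrac{(a-1)y}{1+y}$ even gives the sharper bound $U_N^{(2j+3)}$, where the paper expands term by term), and invoke Lemma \ref{f4surj1}; the boundary case $j=p-1$ is likewise handled via Lemma \ref{norm_power_a-1}, with your valuation estimate $v_N((a-1)^p\theta)\ge p+2$ replacing the paper's observation that $1+\alpha_k p\theta_2$ is fixed by $a$.
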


\begin{proof}
For $l\geq 1$ we put $\eta_l := \alpha_k (a-1)^{l-1} \theta$. Note that $v_N(\eta_l) = l$ for $1 \le l \le p$.
In the following all congruences are modulo ${U_N^{(j+3)}}$.
Then, for $0\leq j<p-1$, we compute
\[\begin{split}
&\hat{f_4}((a-1) \alpha_kw_j-\alpha_kw_{j+1})\\
&\qquad=(1+\eta_{j+1})^{a-1} (1 + \eta_{j+2})^{-1}\\
&\qquad\equiv (1+a\eta_{j+1}) (1 - \eta_{j+1} + \eta_{j+1}^2) (1 - \eta_{j+2})\\
&\qquad\equiv (1-\eta_{j+1} +\eta_{j+1}^2 +a\eta_{j+1} -(a\eta_{j+1})\eta_{j+1}) (1 - \eta_{j+2})\\
&\qquad\equiv (1+\eta_{j+2} -\eta_{j+1}\eta_{j+2}) (1 - \eta_{j+2})\\
&\qquad\equiv (1+\eta_{j+2}) (1 - \eta_{j+2}) \equiv 1.\\
\end{split}\]
For $j=p-1$, using Lemma \ref{norm_power_a-1}, we have
\[\hat f_4((a-1)\alpha_kw_{p-1})=(1+\alpha_k(a-1)^{p-1}\theta)^{a-1}\equiv (1+\alpha_k\theta_2p)^{a-1}=1\pmod{U_N^{(p+1)}}.\]
Now we conclude using Lemma \ref{f4surj1} as in the proof of Lemma \ref{deft1}.
\end{proof}

By construction, any element of $\ff'_p\oplus \ff_p$ can be written as a linear 
combination of $z_1$, $z_2$ and $\alpha_iw_j$, for $i=1,\dots,m$ and $j=0,\dots,p-1$ with coefficients in $\Z_p[G]$. 
In this context we will speak of $z_1$-, $z_2$- and $\alpha_iw_j$-components of elements of $\ker \hat{f_4}$.

We recall that $\tilde m$ is an integer such that $m\tilde m \equiv 1 \pmod d$.

\begin{lemma}
The element
\[
r_1=T_a t_1 + (b-1)t_2
\]
belongs to $\ker \hat{f_4}\cap \ff_p$ and its $\alpha_1w_0$-component is $b^{\tilde m} T_a$.
\end{lemma}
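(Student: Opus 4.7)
My plan is to verify the three assertions of the lemma one at a time, using the explicit formulae for $t_1$ and $t_2$ from the preceding two lemmas. Since both $t_1$ and $t_2$ already lie in $\ker\hat{f_4}$, the $\Z_p[G]$-linear combination $r_1 = T_a t_1 + (b-1) t_2$ is immediately in $\ker\hat{f_4}$. Thus only the $\ff_p$-membership and the $\alpha_1 w_0$-component require work.

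For membership in $\ff_p$, I would isolate the $z_1$- and $z_2$-contributions of $r_1$. Writing $t_1 = (a-1)z_1 - (b-1)z_2 + (\text{something in }\ff_p)$ (the ``something'' being $y_1$ together with the $w_0$-term of $\tilde t_1$), the identity $T_a(a-1)=0$ kills the $(a-1)z_1$ piece of $T_at_1$. The remaining $z_2$-contribution from $T_at_1$ is $-T_a(b-1)z_2$, while $(b-1)t_2$ contributes $(b-1)T_az_2$; these cancel thanks to the commutativity of $G$. The summands $T_ay_1$ and $-(b-1)\beta w_{p-1}$ have no $z_1$- or $z_2$-component. This establishes $r_1\in\ff_p$.

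For the $\alpha_1w_0$-component, I would argue that the only contribution to $w_0$ comes from $T_a$ applied to the $w_0$-coefficient of $\tilde t_1$: the term $T_ay_1$ lies in $(\ff_{\geq 1})_p$ and hence has vanishing $w_0$-component, and the term $-(b-1)\beta w_{p-1}$ sits in the $w_{p-1}$-slot, which is disjoint from $w_0$ since $p\geq 3$. So the $w_0$-coefficient of $r_1$ equals
\[
T_a\!\left(\sum_{i=2}^{m}\alpha_{i}b^{1-(i-2)\tilde m}+\Bigl(\alpha_1-\sum_{i=2}^{m}\alpha_{i}\Bigr)b^{\tilde m}\right).
\]
Using that $\alpha_1,\ldots,\alpha_m$ form a $\Z_p$-basis of $\OK$, the $\alpha_1$-coefficient of the expression in parentheses equals $b^{\tilde m}$ (the summand $\alpha_1 b^{\tilde m}$ contributes $b^{\tilde m}$, while every $\alpha_i$ with $i\geq 2$ contributes $0$). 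Multiplying by $T_a$ and invoking commutativity gives $b^{\tilde m}T_a$, as desired.

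The verification is essentially a careful bookkeeping exercise; the only subtleties are to exploit the commutativity of $G$ (which is available because $N/K$ is abelian) when cancelling the $z_2$-components, and to use the fact that $y_1$ was explicitly placed in $(\ff_{\geq 1})_p$ so that it cannot pollute the $w_0$-component. I do not anticipate any genuine obstacle.
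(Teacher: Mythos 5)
Your proof is correct and is exactly the "straightforward computation" the paper leaves to the reader: membership in $\ker\hat{f_4}$ is immediate from the definitions of $t_1$ and $t_2$, the $z_1$- and $z_2$-components vanish via $T_a(a-1)=0$ and commutativity, and the $\alpha_1 w_0$-component is read off from the $w_0$-coefficient of $\tilde t_1$. No issues.
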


\begin{proof}
By the definition of  $t_1$ and $t_2$ in  Lemmata \ref{deft1} and \ref{deft2} the element $r_1$ belongs to $\ker(\hat f_4)$. Hence it suffices to prove that 
the $z_1$- and $z_2$-components of $r_1$ are zero and the $\alpha_1w_0$-component is $b^{\tilde m} T_a$.
This follows by a straightforward computation.

\end{proof}

\begin{lemma}
The elements
\[r_k=\alpha_k T_a w_0+(b^{-\tilde m}\alpha_{k+1}-\alpha_k)w_{p-1},\]
for $1<k<m$, and
\[r_m=\alpha_m T_a w_0+\left(b^{-\tilde m}\alpha_1-b^{-\tilde m}\sum_{i=2}^m\alpha_i-\alpha_m\right)w_{p-1}\]
are in the kernel of $\hat{f_4}$.
\end{lemma}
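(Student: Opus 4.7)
The plan is to compute $\hat{f_4}(r_k)$ and $\hat{f_4}(r_m)$ directly modulo $U_N^{(p+1)}$ and show each equals $1$; the two cases proceed in parallel until the final residue-field verification. For each $r$ I factor $\hat{f_4}(r)$ into the $\alpha_k T_a w_0$ contribution and the $w_{p-1}$ contribution, handle them separately, and then combine.

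For $\hat{f_4}(\alpha_k T_a w_0) = (1+\alpha_k\theta)^{T_a}$, I would invoke Lemma \ref{Ta_p} to write $T_a = p - u(a-1)^{p-1}$ and treat the two factors in turn. Since $v_N\bigl(\binom{p}{i}(\alpha_k\theta)^i\bigr) \geq p+i \geq p+1$ for $1 \leq i \leq p-1$, the binomial theorem yields $(1+\alpha_k\theta)^p \equiv 1 + \alpha_k^p \theta_1^p \theta_2^p \pmod{\p_N^{p+1}}$; writing the Eisenstein minimal polynomial of $\theta_1$ over $K$ and invoking Lemma \ref{normtrace} gives $\theta_1^p \equiv -\alpha_1^{1-p}p \pmod{\p_N^{p+1}}$. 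For the remaining factor, a straightforward induction on $j$ shows $(1+\alpha_k\theta)^{(a-1)^j} \equiv 1 + \alpha_k(a-1)^j\theta \pmod{U_N^{(j+2)}}$ for $j=0,\dots,p-1$; at $j = p-1$, Lemma \ref{norm_power_a-1} rewrites the right-hand side as $1 + \alpha_k p\theta_2$. Since this element of $\p_N^p$ is $a$-fixed, raising to $-u$ multiplies by the augmentation $-(p-1)! \equiv 1 \pmod p$ (by Wilson), yielding $1 + \alpha_k p\theta_2$ again. Putting the pieces together I obtain
\[
\hat{f_4}(\alpha_k T_a w_0) \equiv 1 + \alpha_k p\theta_2 - \alpha_k^p \alpha_1^{1-p} p\theta_2^p \pmod{\p_N^{p+1}}.
\]

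For the $w_{p-1}$ part, Lemma \ref{lemma3Bley} combined with $(a-1)^{p-1}\theta_1 \equiv p \pmod{\p_N^{p+1}}$ (Lemma \ref{norm_power_a-1}) and $\theta^{b^{-\tilde m}} = \theta_1 \theta_2^{b^{-\tilde m}}$ (since $b$ fixes $M$) gives $\hat{f_4}\bigl((b^{-\tilde m}\alpha_{k+1}-\alpha_k)w_{p-1}\bigr) \equiv 1 + \alpha_{k+1} p \theta_2^{b^{-\tilde m}} - \alpha_k p \theta_2 \pmod{\p_N^{p+1}}$, and analogously for $r_m$ with $\alpha_{k+1}$ replaced by $\alpha_1 - \sum_{i=2}^m\alpha_i$. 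Multiplying, the cross terms lie in $\p_N^{2p} \subseteq \p_N^{p+1}$ and the $\alpha_k p\theta_2$ contributions cancel, leaving
\[
\hat{f_4}(r_k) \equiv 1 + p\bigl(\alpha_{k+1}\theta_2^{b^{-\tilde m}} - \alpha_k^p\alpha_1^{1-p}\theta_2^p\bigr) \pmod{\p_N^{p+1}},
\]
and the analogous expression for $r_m$.

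It remains to verify the residue-field identity $\alpha_{k+1}\theta_2^{b^{-\tilde m}} \equiv \alpha_k^p\alpha_1^{1-p}\theta_2^p \pmod{\p_\Kur}$ (and its $r_m$-analogue), which splits into two Frobenius statements. Since $b|_\Kur = F^{-1}$, $b^{-\tilde m}$ acts on residues of $\theta_2 \in \tilde K'$ as $x \mapsto x^{p^{m\tilde m}}$; writing $m\tilde m = 1 + d\ell$ (using the hypothesis $m\tilde m \equiv 1 \pmod d$) gives $p^{m\tilde m} = p(p^d)^\ell \equiv p \pmod{p^d-1}$, so $\theta_2^{b^{-\tilde m}} \equiv \theta_2^p \pmod{\p_\Kur}$. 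For $1 < k < m$, since $\alpha_k/\alpha_1 = A^{f^{k-2}}$ and $x^p \equiv F(x) \pmod{\p_K}$ for $x \in \OK$ (unramified Frobenius lift), one has $\alpha_k^p\alpha_1^{1-p} \equiv \alpha_{k+1} \pmod{\p_K}$, which closes the case. For $r_m$, the relation $\sum_{i=0}^{m-1} A^{f^i} = \Tr_{K/\Q_p}(A) = 1$ gives $A^{f^{m-1}} = 1 - \sum_{i=0}^{m-2} A^{f^i}$, hence $\alpha_m^p\alpha_1^{1-p} \equiv \alpha_1 - \sum_{i=2}^m\alpha_i \pmod{\p_K}$, matching the coefficient of $w_{p-1}$ in $r_m$. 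The principal technical hurdle is the precise $\pmod{\p_N^{p+1}}$ control of $(1+\alpha_k\theta)^{T_a}$; the split via Lemma \ref{Ta_p}, so that the full norm $\mathcal{N}_{N/K'}(1+\alpha_k\theta)$ need not be computed directly with its intermediate elementary-symmetric terms, is the key simplification.
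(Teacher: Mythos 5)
Your argument is correct, and its skeleton is the same as the paper's: both decompose $r_k$ into the $\alpha_kT_aw_0$ and $w_{p-1}$ summands, compute each modulo $U_N^{(p+1)}$, and reduce the cancellation to the residue-field identities $\theta_2^{b^{-\tilde m}}\equiv\theta_2^p\pmod{\p_{K'}}$ and $\alpha_k^p\alpha_1^{1-p}\equiv\alpha_{k+1}\pmod{\p_K}$ (with the obvious variant for $k=m$ coming from $A^{f^{m-1}}=1-\sum_{i=0}^{m-2}A^{f^i}$), which you verify exactly as the paper does. The genuine divergence is in how $\hat{f_4}(\alpha_kT_aw_0)=\mathcal N_{N/K'}(1+\alpha_k\theta)$ is evaluated. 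The paper applies Lemmas 4 and 5 of \cite[Sec.~V.3]{SerreLocalFields} to write the norm as $1+\mathcal N_{N/K'}(\alpha_k\theta)+\Tr_{N/K'}(\alpha_k\theta)$ modulo $\p_N^{p+1}$ and then feeds in Lemma \ref{normtrace} and $\Tr_{M/K}(\theta_1)=p$. You instead exploit the group-ring identity $T_a=p-u(a-1)^{p-1}$ of Lemma \ref{Ta_p}: the $p$-th power is handled by the binomial theorem together with the Eisenstein relation $\theta_1^p\equiv\mathcal N_{M/K}(\theta_1)\pmod{\p_M^{p+1}}$, the $(a-1)^{p-1}$-factor by the same induction that underlies the computation for the $s_{j,k}$ combined with Lemma \ref{norm_power_a-1}, and the unit $u$ through its augmentation $(p-1)!\equiv -1\pmod p$. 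Both routes land on the identical expression $1+\alpha_kp\theta_2-\alpha_k^p\alpha_1^{1-p}p\theta_2^p$; yours is longer but avoids citing Serre's norm/trace filtration results, replacing them by elementary manipulations already present elsewhere in the paper, at the cost of invoking Wilson's theorem. One detail worth stating explicitly in your induction: the passage from $U_N^{(j+2)}$ to $U_N^{(j+3)}$ for the error term uses that $a\in G_1$ satisfies $v_N(a(x)-x)\ge v_N(x)+1$ for \emph{all} $x$, not only in the range covered by Lemma \ref{lemma1Bley}; this is standard (cf. \cite[Sec.~IV.1]{SerreLocalFields}) but should be recorded, since Lemma \ref{lemma1Bley} as stated does not cover valuations $\ge p$.
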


\begin{proof}
For $1<k\leq m$, using \cite[Sec.~V.3, Lemma 4 and Lemma 5]{SerreLocalFields}, Lemma \ref{defntheta1} and Lemma \ref{normtrace},
\[\begin{split}\hat{f_4}(\alpha_kT_aw_0)&\equiv\mathcal N_{N/K'}(1+\alpha_k\theta) \pmod{\p_N^{p+1}}\\
&\equiv 1+\mathcal N_{N/K'}(\alpha_k\theta)+\Tr_{N/K'}(\alpha_k\theta)\pmod{\p_N^{p+1}}\\
&\equiv 1+(\theta_2\alpha_k)^p\mathcal N_{N/K'}(\theta_1)+\theta_2\alpha_k\Tr_{N/K'}(\theta_1)\pmod{\p_N^{p+1}}\\
&\equiv 1-(\theta_2\alpha_k)^p\alpha_1^{1-p}p+\theta_2\alpha_kp\pmod{\p_N^{p+1}}\\
&\equiv 1+\left(\frac{\theta_2\alpha_k}{\alpha_1}-\left(\frac{\theta_2\alpha_k}{\alpha_1}\right)^p\right)\alpha_1p\pmod{\p_N^{p+1}}
.\end{split}\]
Now we note that $\theta_2^p\equiv\theta_2^{p^{m\tilde m}}\equiv\theta_2^{q^{\tilde m}}\equiv \theta_2^{b^{-\tilde m}} \pmod{\p_{\tilde K'}}$
and by (\ref{choice of alpha}) we have for $1<k<m$,
\[
\left(\frac{\alpha_k}{\alpha_1}\right)^p=\left(A^{f^{k-2}}\right)^p \equiv A^{p^{k-1}} \equiv \frac{\alpha_{k+1}}{\alpha_1} \pmod{\p_K}
\]
and
\[
\left(\frac{\alpha_m}{\alpha_1}\right)^p=\left(A^{f^{m-2}}\right)^p \equiv A^{f^{m-1}} = 1-\sum_{i=0}^{m-2}A^{f^i} = 1-\sum_{i=0}^{m-2}\frac{\alpha_{i+2}}{\alpha_1}
\pmod{\p_K}.
\]
Therefore, for $1<k<m$,
\[
\begin{split}\hat{f_4}(\alpha_kT_aw_0)&\equiv 1+\left(\frac{\theta_2\alpha_k}{\alpha_1}-\frac{\theta_2^{b^{-\tilde m}}\alpha_{k+1}}{\alpha_1}\right)\alpha_1p\pmod{\p_N^{p+1}}\\
&\equiv 1+\left(\alpha_k\theta_2-\alpha_{k+1}\theta_2^{b^{-\tilde m}}\right)p\pmod{\p_N^{p+1}}
\end{split}
\]
and
\[\begin{split}\hat{f_4}(\alpha_mT_aw_0)&\equiv 1+\left(\frac{\theta_2\alpha_m}{\alpha_1}-\theta_2^{b^{-\tilde m}}\left(1-\sum_{i=0}^{m-2}\frac{\alpha_{i+2}}{\alpha_1}\right)\right)\alpha_1p\pmod{\p_N^{p+1}}\\
&\equiv 1+\left(\alpha_m\theta_2-\alpha_1\theta_2^{b^{-\tilde m}}+\sum_{i=2}^{m}\alpha_{i}\theta_2^{b^{-\tilde m}}\right)p\pmod{\p_N^{p+1}}.\end{split}\]
Recalling Lemma \ref{norm_power_a-1}, for $1<k<m$ we obtain
\[\begin{split}\hat{f_4}((b^{-\tilde m}\alpha_{k+1}-\alpha_k)w_{p-1})&\equiv(1+\alpha_{k+1}p\theta_2)^{b^{-\tilde m}}(1+\alpha_kp\theta_2)^{-1}\pmod{\p_{N}^{p+1}}\\
&\equiv 1-\left(\alpha_k\theta_2-\alpha_{k+1}\theta_2^{b^{-\tilde m}}\right)p \pmod{\p_{N}^{p+1}}
\end{split}\]
and
\[\begin{split}&\hat{f_4}\left(\left(b^{-\tilde m}\alpha_1-b^{-\tilde m}\sum_{i=2}^m\alpha_i-\alpha_m\right)w_{p-1}\right)\\
&\qquad\equiv(1+\alpha_{1}p\theta_2)^{b^{-\tilde m}}\prod_{i=2}^m(1+\alpha_ip\theta_2)^{-b^{-\tilde m}}(1+\alpha_mp\theta_2)^{-1}\pmod{\p_{N}^{p+1}}\\
&\qquad\equiv 1-\left(\alpha_m\theta_2-\alpha_{1}\theta_2^{b^{-\tilde m}}+\sum_{i=2}^m\alpha_i\theta_2^{b^{-\tilde m}}\right)p \pmod{\p_{N}^{p+1}}
\end{split}\]
Therefore we conclude that in all cases $r_k\in\ker \hat{f_4}$.
\end{proof}

\begin{lemma}\label{genkerh}
The $pm+m$ elements $r_{k},s_{j,k}$ for $0\leq j\leq p-1$, $1\leq k\leq m$ generate $\ker \hat{f_4}\cap\ff_p$ 
as a $\Z_p[G]$-module.
\end{lemma}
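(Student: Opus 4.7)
Let $R$ denote the $\ZpG$-submodule of $\ff_p$ generated by the $r_k$ and the $s_{j,k}$. Since each generator already lies in $\ker\hat f_4$, one has $R\subseteq\ker\hat f_4\cap\ff_p$, so only the reverse inclusion needs proof. My plan is to proceed by induction on the filtration step $j$, establishing for each $0\le j\le p-1$ that
\[
\ker\hat f_4\cap(\ff_{\geq j})_p\subseteq R+(\ff_{\geq j+1})_p.
\]
Because $(\ff_{\geq p})_p=0$, iterating this bound yields $\ker\hat f_4\cap\ff_p\subseteq R$, as desired.

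For the inductive step I would fix $x\in\ker\hat f_4\cap(\ff_{\geq j})_p$ and write $x=\beta_j w_j+x'$ with $\beta_j\in\OKG$ and $x'\in(\ff_{\geq j+1})_p$. Applying Lemma \ref{lemma3Bley}, and using that $\hat f_4$ sends $(\ff_{\geq j+1})_p$ into $U_N^{(j+2)}/U_N^{(p+1)}$, yields $\beta_j(a-1)^j\theta\in\p_N^{j+2}$. The first key task is the ring-theoretic claim $\beta_j\in(T_a,a-1)\OKG$. For $j\le p-2$ I would invoke Lemma \ref{lemma2Bley}(b) to rewrite this as $\beta_j(a-1)^j\in(T_a,(a-1)^{j+1})\OKG$ and reduce modulo $p$: in the local ring $\F_p[G]=\F_p[a-1,b]/((a-1)^p,b^d-1)$ one has $T_a\equiv(a-1)^{p-1}\pmod p$, and a short annihilator computation forces $\bar\beta_j\in(a-1)\F_p[G]$, whence $\beta_j\in(T_a,a-1)\OKG$. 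For the boundary case $j=p-1$ I would instead use $\p_N^{p+1}=p\OKG\theta$ together with Lemma \ref{norm_power_a-1} to derive $\beta_{p-1}T_a\in p\OKG$, and conclude by the same annihilator argument. Writing $\beta_j=T_a\gamma+(a-1)\delta$ with $\gamma,\delta\in\OKG$, it then suffices to reduce $T_a\gamma w_j$ and $(a-1)\delta w_j$ separately modulo $R$.

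The remaining task is explicit bookkeeping with the generators. For the $(a-1)\delta$-part when $j\le p-2$, the relation $s_{j,k}$ shows $\alpha_k(a-1)w_j\equiv\alpha_k w_{j+1}-\mu_{j,k}\pmod{R}$, hence lies in $R+(\ff_{\geq j+1})_p$; expanding $\delta$ in the $\Z_p$-basis $\alpha_1,\dots,\alpha_m$ of $\OK$ completes this case. For $j=p-1$ one has $\mu_{p-1,k}=0$, so $s_{p-1,k}=\alpha_k(a-1)w_{p-1}$ already lies in $R$. For $T_a\gamma w_j$ with $j\ge 1$, multiplying $s_{j-1,k}$ by $T_a$ and using $T_a(a-1)=0$ gives $T_a\alpha_kw_j\equiv T_a\mu_{j-1,k}\pmod{R}$, which lies in $(\ff_{\geq j+1})_p$. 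The genuinely interesting case is $j=0$: for $k\ge 2$ the element $r_k$ places $\alpha_kT_aw_0$ in $R+(\ff_{\geq p-1})_p\subseteq R+(\ff_{\geq 1})_p$, and $b^{-\tilde m}r_1$, corrected by a suitable $\ZpG$-combination of $r_2,\dots,r_m$ to eliminate its unwanted $\alpha_kw_0$-contributions for $k\ge 2$, then supplies $\alpha_1T_aw_0\in R+(\ff_{\geq 1})_p$; here the invertibility of $b^{\tilde m}$ in $\ZpG$ is crucial.

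The step I expect to be the main obstacle is the algebraic implication $\beta_j(a-1)^j\theta\in\p_N^{j+2}\Rightarrow\beta_j\in(T_a,a-1)\OKG$, particularly in the boundary case $j=p-1$ where Lemma \ref{lemma2Bley}(b) does not directly describe $\p_N^{p+1}$ and one must replace it by the congruence $(a-1)^{p-1}\equiv T_a\pmod p$ from Lemma \ref{norm_power_a-1}. Once this algebraic reduction is secured, the remaining combinatorial reductions are essentially forced by the identity $T_a(a-1)=0$ together with the carefully chosen shapes of $r_k$ and $s_{j,k}$.
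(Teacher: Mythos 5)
Your proof is correct and follows the same overall strategy as the paper's: a descending induction along the filtration $(\ff_{\geq j})_p$, reduction of the leading coefficient of $w_j$ to the ideal $(T_a,a-1)\sseq\OKG$, and the same bookkeeping with $T_as_{j-1,k}$, $s_{j,k}$ and $r_k$ (the paper packages the $T_a$-part as auxiliary elements $r_{j,k}:=T_as_{j-1,k}$ and treats $j=0$ via $r_{0,k}:=r_k$, exactly as you do). The one point where you genuinely diverge is the pivotal implication $\lambda(a-1)^j\theta\in\p_N^{j+2}\Rightarrow\lambda\in(T_a,a-1)$. The paper argues with valuations in $N$: Lemma \ref{lemma1Bley} rules out $v_N(\lambda\theta)=1$, and then Lemma \ref{lemma2Bley}(b) with $j=1$ gives $\lambda\theta\in\p_N^2=(T_a,a-1)\theta$; this handles all $0\le j\le p-1$ uniformly, with no special case at $j=p-1$. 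Your route stays inside the group ring: after invoking Lemma \ref{lemma2Bley}(b) (resp.\ $\p_N^{p+1}=p\,\OKG\theta$ together with Lemma \ref{norm_power_a-1} for $j=p-1$) you reduce modulo $p$ and compute the annihilator of $(a-1)^j$ in $S[t]/(t^p)$ with $t=a-1$. Both are valid; the paper's is shorter because the valuation lemma is already in place, while yours is more self-contained algebra. Two cosmetic remarks on your version: the residue ring is $(\calO_K/p)[G]\cong\F_q[G]$ with $q=p^m$, not $\F_p[G]$, and it need not be local when $p\nmid d$ --- neither fact affects the annihilator computation, since it only uses that coefficients in $S=(\calO_K/p)[b]/(b^d-1)$ are uniquely determined. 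Finally, your explicit handling of the case $k=1$, $j=0$ via $b^{-\tilde m}r_1$ corrected by suitable multiples of $r_2,\dots,r_m$ (using that $b^{\tilde m}$ is a unit) is precisely what the paper leaves implicit in the phrase that $\lambda_1w_j$ is a combination of the $r_{j,k}$ and $s_{j,k}$.
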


\begin{proof}
We define elements $r_{j, k}$, for $0\leq j\leq p-1$, $1\leq k\leq m$, as follows: $r_{0, k}=r_k$ and $r_{j,k}=T_as_{j-1,k}$, for $j>0$. It will suffice to show that the $2pm$ elements $r_{j,k},s_{j,k}$ for $0\leq j\leq p-1$, $1\leq k\leq m$ are generators of 
$\ker\hat{f}_4 \cap \ff_p$.

It is obvious that they generate $\ker \hat{f_4}\cap (\ff_{\geq p})_p=\{0\}$. Let us assume they generate $\ker \hat{f_4}\cap (\ff_{\geq j+1})_p$, 
for some $j<p$, and let us prove that they generate $\ker \hat{f_4}\cap (\ff_{\geq j})_p$. Let $x\in \ker \hat{f_4}\cap (\ff_{\geq j})_p$. 
We can write $x=\lambda_1 w_j+\lambda_2$, for some $\lambda_1\in \oo_{K}[G]$ and $\lambda_2\in (\ff_{\geq j+1})_p$. Then by Lemma \ref{lemma3Bley} we have
$\hat{f_4}(x)\equiv 1+(a-1)^j\lambda_1\theta\pmod{U_{N}^{(j+2)}},$
which must be congruent to $1$ by the assumption that $x\in\ker \hat{f_4}$. Hence
\begin{equation}(a-1)^j\lambda_1\theta\in\p_N^{j+2}.\label{inlemmakerf4}\end{equation}
By Lemma \ref{lemma1Bley}, if $v_N\left(\lambda_1\theta\right) = 1$, then $v_N\left((a-1)^j\lambda_1\theta\right) = j+1$ (recall that $j<p$), 
and this contradicts (\ref{inlemmakerf4}). Hence $v_N\left(\lambda_1\theta\right)>1$, so that Lemma \ref{lemma2Bley} implies
$\lambda_1\theta\in\p_{N}^2=(T_a,a-1)\theta$. It follows that $\lambda_1 \in (T_a, a-1)$.
So in particular $\lambda_1w_j$ is a sum of a linear combination of the elements $r_{j,k}$ and $s_{j,k}$, for $k=1,\dots,m$, and an element in $(\ff_{\geq j+1})_p$. Hence also $x$ is a combination of the elements $r_{j,k}$ and $s_{j,k}$ and an element in $(\ff_{\geq j+1})_p$, which must also be in $\ker \hat{f_4}$. To conclude we only need to recall the inductive hypothesis.
\end{proof}

Now we consider the $\alpha_iw_{p-1}$-components, $i = 1,\ldots,m$, of $t_2$ and $r_k$ for $k=2,\dots,m$. 
We write these components as the columns of an $m\times m$ matrix $\M$. We have to distinguish two cases. If $m>1$,

\[
\M=\left(\begin{matrix}
0&0&0&\dots&0&0&0&b^{-\tilde m}\\
-1&-1&0&\dots&0&0&0&-b^{-\tilde m}\\
0&b^{-\tilde m}&-1&\dots&0&0&0&-b^{-\tilde m}\\
0&0&b^{-\tilde m}&\dots&0&0&0&-b^{-\tilde m}\\
\vdots&\vdots&\vdots&\ddots&\vdots&\vdots&\vdots&\\
0&0&0&\dots&b^{-\tilde m}&-1&0&-b^{-\tilde m}\\
0&0&0&\dots&0&b^{-\tilde m}&-1&-b^{-\tilde m}\\
0&0&0&\dots&0&0&b^{-\tilde m}&-1-b^{-\tilde m}
\end{matrix}\right).
\]

If $m=1$, then the matrix is determined only by $t_2$ and, recalling its definition from Lemma \ref{deft2}, we get
$\M=(-1)$.

\begin{lemma}\label{detM}
The determinant of $\M$ is $(-1)^{m}b^{-\tilde m(m-1)}=(-1)^{m}b^{\tilde m-1}$.
\end{lemma}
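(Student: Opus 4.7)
The plan is to compute the determinant by cofactor expansion along the first row, exploiting the extreme sparsity there, and then recognize the resulting minor as upper triangular.

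First I would handle the trivial case $m=1$, where $\M=(-1)$ and $b^{-\tilde m(m-1)}=b^0=1$, so $\det\M=-1=(-1)^1 b^{\tilde m -1}$ holds (since $b^{\tilde m-1}=b^0=1$ when $m=1$ forces $\tilde m\equiv 1\bmod d$).

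For $m>1$, I observe that the first row of $\M$ has exactly one non-zero entry, namely $b^{-\tilde m}$ in position $(1,m)$. Cofactor expansion therefore yields
\[
\det\M=(-1)^{1+m}\,b^{-\tilde m}\,\det\M',
\]
where $\M'$ is the $(m-1)\times(m-1)$ submatrix obtained by deleting the first row and the last column of $\M$. Reading off the entries of $\M$, the matrix $\M'$ has first row $(-1,-1,0,\dots,0)$, and for $2\le i\le m-2$ its $i$-th row has $b^{-\tilde m}$ in position $i$ and $-1$ in position $i+1$ with zeros elsewhere, while its last row is $(0,\dots,0,b^{-\tilde m})$. In particular every entry strictly below the diagonal of $\M'$ is zero, so $\M'$ is upper triangular with diagonal $(-1,b^{-\tilde m},b^{-\tilde m},\dots,b^{-\tilde m})$ (one $-1$ followed by $m-2$ copies of $b^{-\tilde m}$). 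Hence
\[
\det\M'=(-1)\cdot (b^{-\tilde m})^{m-2}=-b^{-\tilde m(m-2)},
\]
and combining with the cofactor expansion gives $\det\M=(-1)^{m}\,b^{-\tilde m(m-1)}$.

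It remains to identify the exponent. Since $b$ has order $d$ and $m\tilde m\equiv 1\pmod d$, we have $-\tilde m(m-1)=\tilde m-\tilde m m\equiv \tilde m -1\pmod d$, so $b^{-\tilde m(m-1)}=b^{\tilde m-1}$, completing the identification $\det\M=(-1)^m b^{-\tilde m(m-1)}=(-1)^m b^{\tilde m-1}$. The only potential obstacle is purely bookkeeping, namely making sure that the indexing of the tridiagonal block in $\M$ is read correctly so that $\M'$ really is upper triangular; once this is verified the computation is immediate.
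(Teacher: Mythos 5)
Your proof is correct; the paper dismisses this as ``an easy calculation,'' and your cofactor expansion along the sparse first row followed by the observation that the remaining minor is upper triangular with diagonal $(-1,b^{-\tilde m},\dots,b^{-\tilde m})$ is exactly the intended computation, including the reduction $-\tilde m(m-1)\equiv \tilde m-1 \pmod d$ using $m\tilde m\equiv 1\pmod d$ and $\mathrm{ord}(b)=d$.
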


\begin{proof}
This is an easy calculation.
\end{proof}

\begin{lemma}\label{sp-1}
For $k=1,\dots,m$ we have $s_{p-1,k} \in \langle t_2, r_2, \ldots, r_m \rangle_{\ZpG}$.
\end{lemma}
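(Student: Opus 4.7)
The plan is to exploit the identity $(a-1)T_a=0$ in $\ZpG$ together with the fact that the coefficients $\alpha_i\in\calO_K$ commute with $a$. First I would observe that, by the very definition of $s_{j,k}$, setting $j=p-1$ gives $w_p=0$ and $\mu_{p-1,k}\in(\ff_{\geq p+1})_p=0$, so that
\[
s_{p-1,k}=\alpha_k(a-1)w_{p-1}
\]
for each $k=1,\dots,m$. Hence it suffices to produce every $\alpha_k(a-1)w_{p-1}$ as a $\ZpG$-linear combination of $t_2,r_2,\dots,r_m$.

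Next I would multiply each of the generators $t_2,r_2,\dots,r_m$ on the left by $a-1$. Since $(a-1)T_a=0$, the term $T_a z_2$ appearing in $t_2$ and all the terms $\alpha_k T_a w_0$ appearing in the $r_k$ are killed; only the $w_{p-1}$-components survive. Because $\alpha_i\in\calO_K$ commutes with $a$, the resulting elements are precisely the $\ZpG$-linear combinations of $s_{p-1,1},\ldots,s_{p-1,m}$ whose coefficient matrix is exactly the $m\times m$ matrix $\M$ displayed just before Lemma \ref{detM}. Symbolically, if one collects $((a-1)t_2,(a-1)r_2,\dots,(a-1)r_m)$ as a row vector, one obtains $(s_{p-1,1},\dots,s_{p-1,m})\,\M$.

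Finally I would appeal to Lemma \ref{detM}: $\det(\M)=(-1)^m b^{\tilde m-1}$, which is a unit in $\ZpG$ because $b$ is invertible in the group ring. Hence $\M$ is invertible over $\ZpG$, and inverting it expresses each $s_{p-1,k}$ as a $\ZpG$-linear combination of $(a-1)t_2,(a-1)r_2,\dots,(a-1)r_m$, which lie in $\langle t_2,r_2,\dots,r_m\rangle_{\ZpG}$. The case $m=1$ is even simpler: there $\M=(-1)$ and directly $(a-1)t_2=-\alpha_1(a-1)w_{p-1}=-s_{p-1,1}$.

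There is no real obstacle here; the only points that need to be checked carefully are the vanishing $\mu_{p-1,k}=0$ (which follows from the convention that $\ff_{\geq p+1}=0$) and the unit property of $\det(\M)$ in $\ZpG$, both of which are essentially bookkeeping.
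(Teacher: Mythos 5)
Your proposal is correct and takes essentially the same route as the paper: both arguments reduce to $s_{p-1,k}=\alpha_k(a-1)w_{p-1}$, use $(a-1)T_a=0$ to annihilate every component of $t_2,r_2,\dots,r_m$ outside the $\alpha_iw_{p-1}$-block, and invoke Lemma \ref{detM} to invert $\M$ over $\ZpG$. The only (immaterial) difference is the order of operations — you multiply by $(a-1)$ first and then invert $\M$, while the paper first takes the $\M^{-1}$-combination and then multiplies by $(a-1)$.
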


\begin{proof}
We fix $k$ such that $1 \le k \le m$. Recall that $s_{p-1, k} = \alpha_k (a-1)w_{p-1}$.
By Lemma \ref{detM} there is a $\ZpG$-linear combination $x$ of $t_2$ and the elements $r_i$, $i=2,\dots,m$, 
such that the $\alpha_kw_{p-1}$-component of $x$ is $1$ and the $\alpha_jw_{p-1}$-components for $j\neq k$ are zero. 
The components of $x$ outside of $\alpha_iw_{p-1}$ are (by the definition of $t_2$ and $r_i$) always multiples of $T_a$. Therefore we can conclude that $(a-1)x=s_{p-1,k}$. 
\end{proof}

We are now ready to state and prove the main result of this subsection.

\begin{prop}\label{pm+1generators}
The $pm+1$ elements $t_1,t_2$, $r_k$, for $k=2,\dots,m$, $s_{j,k}$ for $0\leq j\leq p-2$, $1\leq k\leq m$ 
constitute a $\ZpG$-basis of  $\ker \hat{f_4}$.
\end{prop}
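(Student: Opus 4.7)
The plan is to first collect the generation results from the preceding lemmas to show that the listed $pm+1$ elements $\Z_p[G]$-generate $\ker \hat{f_4}$, and then to prove that $\ker \hat{f_4}$ itself is free over $\Z_p[G]$ of rank precisely $pm+1$. Once both are established, the standard fact that any $\Z_p[G]$-linear surjection $\Z_p[G]^{pm+1} \twoheadrightarrow \Z_p[G]^{pm+1}$ is automatically an isomorphism will force the given generating set to be a basis.

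For the generation step, I would combine Lemmas \ref{genkerf4X2}, \ref{genkerh} and \ref{sp-1}. Lemma \ref{genkerf4X2} gives that $t_1$ and $t_2$ generate $\ker \hat{f_4}$ modulo $\ff_p$, and Lemma \ref{genkerh} gives that the full list of $r_k$ (for $1 \le k \le m$) and $s_{j,k}$ (for $0 \le j \le p-1$, $1 \le k \le m$) generate $\ker \hat{f_4} \cap \ff_p$. The defining identity $r_1 = T_a t_1 + (b-1)t_2$ makes $r_1$ redundant once $t_1$ and $t_2$ are in hand, and Lemma \ref{sp-1} lets one discard each $s_{p-1,k}$; what remains is precisely the proposed list of $pm+1$ elements.

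For the freeness, Proposition \ref{rep of lfc} identifies $\ker \hat{f_4}$ with the module $\Kappa_p$ of Section \ref{definition of EX}. By \cite[Lemma 3.7]{BlBu03} $\Kappa$ is $\Z[G]$-projective and, as recorded in Section \ref{definition of EX}, $\Kappa_p \oplus \Z_p[G]$ is $\Z_p[G]$-free; hence $\ker \hat{f_4}$ is stably free. Because $\Z_p[G]$ is a finitely generated $\Z_p$-algebra that is complete in the $p$-adic topology, it is a finite product of complete local commutative rings (via Hensel lifts of its idempotents), and over such a ring any stably free module has constant local rank and is therefore free.

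It only remains to pin down that rank. Tensoring the short exact sequence
\[
0 \lra \ker \hat{f_4} \lra X(2)_p \oplus \ff_p \xrightarrow{\hat{f_4}} N(p+1)_p \lra 0
\]
with $\Q_p$ and comparing $\Q_p$-dimensions, one finds $\dim_{\Q_p} X(2)_p \otimes \Q_p = pd+1$ (as the kernel of the surjection $\Q[G]^2 \twoheadrightarrow (b-1, a-1)\Q[G]$ onto the augmentation ideal), $\dim_{\Q_p} \ff_p \otimes \Q_p = p^2md$, and $\dim_{\Q_p} N(p+1)_p \otimes \Q_p = 1$, since $U_N/U_N^{(p+1)}$ is an extension of a finite $p$-group by a group of order coprime to $p$, both killed upon tensoring with $\Q_p$, leaving only the $v_N$-quotient $\Z$. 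This yields $\dim_{\Q_p}(\ker \hat{f_4}) \otimes \Q_p = pd(pm+1)$, hence free rank $pm+1$, which completes the proof. The main conceptual work has already been carried out in the explicit constructions of $t_1,t_2,r_k,s_{j,k}$ in the preceding lemmas; the present proposition is essentially an assembly step combined with a $\Q_p$-dimension count.
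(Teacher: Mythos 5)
Your proposal is correct and follows essentially the same route as the paper: generation is assembled from Lemmas \ref{genkerf4X2}, \ref{genkerh} and \ref{sp-1} together with the redundancy of $r_1 = T_a t_1 + (b-1)t_2$, and the conclusion then follows because $\ker\hat{f_4}$ is $\Z_p[G]$-free of rank $pm+1$. The paper simply cites \cite[Lemma 3.7]{BlBu03} for freeness and reads the rank off from the isomorphism (\ref{theta tilde}) with $r=pm+2$, whereas you justify freeness via stably-free-implies-free over the product of local rings $\Z_p[G]$ and obtain the rank by the equivalent $\Q_p$-dimension count; both computations agree.
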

\begin{proof}
By \cite[Lemma 3.7]{BlBu03}, $\ker\hat f_4$ is $\Z_p[G]$-free. From (\ref{theta tilde}) with $r=pm+2$ we deduce that the $\Z_p[G]$-rank is $pm+1$. It therefore suffices to show that $t_1,t_2$, $r_k$, for $k=2,\dots,m$, $s_{j,k}$ for $0\leq j\leq p-2$, $1\leq k\leq m$ generate $\ker \hat f_4$.

By Lemma \ref{sp-1} and the definition of $r_1$, it is enough to show that the $pm+m+2$ elements $t_1,t_2$ and $r_k,s_{j,k}$ for $0\leq j\leq p-1$, $1\leq k\leq m$ are generators. This has been shown in Lemma \ref{genkerf4X2} and Lemma \ref{genkerh}.
\end{proof}

Writing the $z_1$-, $z_2$- and $\alpha_iw_j$-components of the above generators as the columns of a matrix $\mm$ we obtain  
\[\mm=\left(\begin{matrix}
a-1&0&0&0&0&\cdots&0&0\\
 1-b&T_a&0&0&0&\cdots&0&0\\
v&0&T_a\tilde I&(a-1)I&0&\cdots&0&0\\
*&0&0&-I&(a-1)I&\cdots&0&0\\
*&0&0&*&-I&\cdots&0&0\\
\vdots&\vdots&\vdots&\vdots&\vdots&\ddots&\vdots&\vdots\\
*&0&0&*&*&\cdots&-I&(a-1)I\\
*&-e_2&\tilde \M&*&*&\cdots&*&-I
\end{matrix}\right),
\]
where $I$ is the $m \times m$-identity matrix and $\tilde \M$ and $\tilde I$ denote the matrices obtained by removing the first column of $\M$ and $I$ respectively. If $m > 1$ the vector $e_2$ is the second vector in the canonical basis 
of $\ZpG^m$, and if $m=1$ we set $e_2 = (1)$. Finally, $v \in \ZpG^m$ is a vector with first component $b^{\tilde m}$.
\end{subsection}

\begin{subsection}{Computation of the representative of $E(\exp(\calL))_p$}

We recall that $G = \gal(N/K) = \langle a \rangle \times \langle b \rangle$. Any irreducible character $\psi$ of $G$
decomposes as $\psi = \chi\phi$, where $\chi$ is an irreducible character of $\langle a \rangle$ and  
$\phi$ an irreducible character of $\langle b \rangle$.

We also recall that we always identify $K_0(\ZpG, \Qp)$ with $\QpG^\times / \ZpG^\times$. The following proposition
describes a representative of  $E(\exp(\calL))_p$ in $\QpG^\times$, which we regard as a subset of 
$\Q_p^c[G]^\times\simeq \bigoplus_{\chi, \phi} \Q_p^{c, \times}$.

\begin{prop}\label{representative prop}
We assume the setting introduced in Section \ref{section setting 1}. For $\calL = \p_N^{p+1}$ the element $E(\exp(\mathcal L))_p$ 
is represented by $\epsilon \in \QpG^\times$ where 
\[\epsilon_{\chi\phi}=\begin{cases}
dp^m&\text{if $\chi=\chi_0$ and $\phi=\phi_0$,}\\
\frac{\phi(b)^{\tilde m}}{1-\phi(b)}p^m&\text{if $\chi=\chi_0$ and $\phi\neq\phi_0$,}\\
(-1)^{m+1}\phi(b)^{\tilde m-1}(\chi(a)-1)^{m(p-1)}&\text{if $\chi\neq\chi_0$}.
\end{cases}
\] 
\end{prop}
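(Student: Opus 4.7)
The plan is to apply the recipe at the end of Section \ref{definition of EX} directly, using the data assembled in the previous sections. By Proposition \ref{rep of lfc} and Lemma \ref{basic diagram}, we take $\Sigma = X(2) \oplus \ff$ with $\varphi = -f_4$, and since $\exp(\calL) = U_N^{(p+1)}$ by \cite[II, Satz (5.5)]{Neukirch92}, we have $\Kappa_p = \ker(\hat f_4)$. So $E(U_N^{(p+1)})_p$ is represented by $\det A_{\tilde\theta}$ for a $(pm+2)\times(pm+2)$ matrix expressing $\tilde\theta\colon (\Kappa_p \oplus \ZpG)_{\Qp} \to (\ff'_p \oplus \ff_p)_{\Qp}$ on the bases coming from Proposition \ref{pm+1generators} and from (\ref{ff iso}).

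The first $pm+1$ columns of $A_{\tilde\theta}$ are precisely the columns of the matrix $\mm$ displayed at the end of Section \ref{kernel of f_4}. The final column (corresponding to the generator $1 \in \ZpG$) is constructed by chasing through the composite (\ref{thetatilde}): one has $1 \mapsto (1, 1-e_G) \in \Qp \oplus B_{\Qp}$, then $(\id,\nu_N^{-1},\id)$ sends this to $([\theta], 1-e_G)$, then $(i_1,\rho,\id)$ gives $\rho([\theta]) + (1-e_G) \in \Sigma_\Qp \oplus B_\Qp$, and $(i_2,\sigma)$ lands in $(\ff'\oplus\ff)_{\Qp}$. A convenient choice is $\rho([\theta]) = -T_b z_1 + (\text{unit correction in } w_{p-1})$ using $f_4(T_b z_1) = [\theta_1]$; and $\sigma(1-e_G)$ is written in $z_1, z_2$-coordinates via the identities $p - T_a = (a-1)^{p-1}u$ from Lemma \ref{Ta_p} and its $b$-analogue.

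Evaluate $\det A_{\tilde\theta}$ character by character via the Wedderburn decomposition. The crucial structural point is that the $s_{j,k}$-columns form a block bidiagonal ``staircase'': $s_{j,k}$ contributes $(a-1)$ at $(w_j,k)$ and $-1$ at $(w_{j+1},k)$, modulo higher-order perturbations $\mu_{j,k} \in (\ff_{\geq j+2})_p$ which can be cleared using later $s_{j',k'}$ via column operations that preserve the determinant class modulo $\ZpG^\times$. Now we split into three cases. If $\chi\neq\chi_0$ then $\chi(T_a)=0$, so $t_2$ and the $r_k$ are concentrated in the $w_{p-1}$-block and assemble into the matrix $\M$; Laplace expansion pulls off these $m$ rows and columns contributing $\det\M = (-1)^m\phi(b)^{\tilde m -1}$ by Lemma \ref{detM}, while the remaining $s$-columns give a lower bidiagonal with diagonal $(\chi(a)-1)I$, producing $(\chi(a)-1)^{m(p-1)}$; a final $2\times 2$ minor in the rows $z_1, z_2$ against $t_1$ and the extra column supplies the remaining sign $-1$. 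If $\chi=\chi_0$ but $\phi\neq\phi_0$, the $T_a = p$ entries in $t_2$ and the $r_k$ produce a factor $p^m$ after the staircase collapses (since $\chi(a-1)=0$), while the combination of the $z_2$-row of $t_1$ (entry $1-\phi(b)$) and the $w_0$-row of $t_1$ (whose first coordinate evaluates to $\phi(b)^{\tilde m}$) yields $\phi(b)^{\tilde m}/(1-\phi(b))$. If $\chi=\chi_0$ and $\phi=\phi_0$, both $a-1$ and $b-1$ vanish, the row $z_1$ of $\mm$ becomes zero, and the determinant must be extracted through the extra column; the factor $d$ appears through $\sigma(1-e_G)$, which contributes $T_b = d$ at the trivial character, combining with the $p^m$ from the $T_a$ entries.

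The main obstacle is the trivial-character case $\chi_0\phi_0$, where $\mm$ degenerates sharply and the determinant is entirely driven by the fundamental-class section $\sigma(1-e_G)$ in the extra column — here one must track the precise contribution of $\sigma$ to see the $d$ factor emerge. A secondary obstacle is verifying that the various $*$-perturbations (from $y_1$ in $t_1$ and from the $\mu_{j,k}$ in $s_{j,k}$) together with the specific choices of $\rho$ and $\sigma$ contribute only elements of $\ZpG^\times$ to $\det A_{\tilde\theta}$, which is needed so that the class of $\epsilon$ in $\QpG^\times/\ZpG^\times$ is independent of these choices. Careful bookkeeping of signs coming from Laplace expansion, from Lemma \ref{Ta_p}, and from the definition of $\tilde\theta$ is the final delicate point.
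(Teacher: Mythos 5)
Your overall strategy coincides with the paper's: take $\Sigma=X(2)\oplus\ff$, $\varphi=-f_4$, use the basis of $\ker\hat f_4$ from Proposition \ref{pm+1generators}, form $A_{\tilde\theta}=(w\mid\mm)$ with one extra column $w=\tilde\theta((0,1))$ coming from explicit splittings $\tau,\sigma,\rho$, and evaluate the determinant character by character using the block structure. The answers you target are the correct ones. However, three points in your bookkeeping would fail if executed literally. First, $\rho$ must be $G$-equivariant, and $N(p+1)_{\Q_p}$ is the trivial representation, so the image of the generator must be $G$-fixed: the correct choice is $\rho(\theta_1)=-e_aT_bz_1$, not $-T_bz_1$ plus a correction in $w_{p-1}$. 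Second, in the case $\chi=\chi_0$, $\phi\neq\phi_0$ you claim the factor $\phi(b)^{\tilde m}/(1-\phi(b))$ arises by ``combining'' the $z_2$-entry and the $w_0$-entry of the single column $t_1$; a column contributes exactly one entry to each term of a determinant expansion, so this is impossible. In the correct expansion $t_1$ is used only to cover one $w_0$-row (contributing $\phi(b)^{\tilde m}$), the $z_2$-row is covered by $t_2$ (contributing $p$), and the factor $\frac{1}{\phi(b)-1}$ comes from the $z_1$-entry of the extra column $w$, namely from the $\frac{1-e_b}{b-1}e_a$ part of $\sigma((1-e_G)z_0)=\frac{1-e_b}{b-1}e_az_1+\frac{1-e_a}{a-1}z_2$ (no appeal to Lemma \ref{Ta_p} is needed to define $\sigma$). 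Third, in the case $\chi=\chi_0=\phi_0$ the factor $d$ does not come from $\sigma$ (which vanishes at the trivial character, being supported on $1-e_G$); it comes from the $-e_aT_bz_1$ in $\rho$, i.e.\ the $z_1$-entry of $w$ evaluates to $-d$ there. Once $w$ is computed correctly as $e_a\bigl(\frac{1-e_b}{b-1}-T_b\bigr)$ in the $z_1$-coordinate and $\frac{1-e_a}{a-1}$ in the $z_2$-coordinate, your case analysis and sign accounting go through as planned.
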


\begin{proof}
By Proposition \ref{rep of lfc} the  map $-f_4$ represents the local fundamental class. 
Following the recipe described in Section \ref{definition of EX} we therefore consider the following diagram.
\[\xymatrix{
&0\ar[d]&0\ar[dr]&&0\\&\ker f_4\ar[d]^{i_1}&&\ker \delta_1\ar@/_/@{.>}[dl]_{\sigma}\ar[dr]^{i_3}\ar[ur]\\
0\ar[r]&X(2)\oplus \ff\ar[r]^{i_2}\ar@/_/[d]_{-f_4}&\ff'\oplus \ff\ar@/_/[ur]_{\delta_2}\ar[rr]^{\delta_2}&&\Z[G]z_0\ar@/_/[r]_{\quad \delta_1}&\Z\ar@/_/@{.>}[l]_{\tau}\ar[r]&0\\
&N(p+1)\ar[d]\ar@/_/@{.>}[u]_{\rho} & &\\
& 0
}
\]
Here the dotted maps $\tau,\sigma,\rho$ denote $G$-equivariant 
splitting morphisms. They only exist after tensoring with $\Q$. Since we are only
interested in the $p$-part of $E(\exp(\mathcal L))$ we tensor right away with $\Qp$. In the following, if $Y$
is a $\Z$-module, we set $Y_\Qp := \Qp \otimes_\Z Y$.

Explicitly, we define $\tau:\Q_p\to\Q_p[G]z_0$ by setting $\tau(1)=e_Gz_0$. For the definition of $\sigma$ we first note that
$(\ker\delta_1)_\Qp$ is generated by $(1-e_G)z_0$ as a $\QpG$-module. It is easy to see that
$\sigma((1-e_G)z_0):=\frac{1-e_b}{b-1}e_az_1+\frac{1-e_a}{a-1}z_2$ determines a well defined splitting 
$\sigma: (\ker \delta_1)_\Qp \to (\ff'\oplus \ff)_\Qp$. Here $\frac{1-e_b}{b-1}$ denotes the
inverse of $b-1$ on the $(1 -e_b)$-component of $\Q_p[b]$. Analogously we define $\frac{1-e_a}{a-1}$.

Finally, we also need to define $\rho:N(p+1)_{\Q_p}\to (X(2) \oplus \ff)_{\Q_p}$. 
We have $N(p+1)_{\Q_p}=\langle\theta_1\rangle_{\Q_p} \simeq \Qp$ because $U_N / U_N^{(p+1)}$ is torsion. 
We  define $\rho(\theta_1) :=-e_aT_bz_1$ and easily see that it defines a $G$-equivariant splitting of $-f_4$.

In the following all maps are morphisms of $\QpG$-modules, even though this will not be apparent in the notation.
The isomorphism $\tilde\theta$ of Section \ref{definition of EX} specialized to our situation is now explicitly given by
\[\begin{split}
\tilde\theta:(\ker f_4)_{\Q_p}\oplus\QpG&\xrightarrow{(\mathrm{id},(\tau,i_3)^{-1})}(\ker f_4)_{\Q_p}\oplus(\Qp\oplus (\ker d_1)_{\Q_p})\\
&\xrightarrow{(\mathrm{id},\nu_N^{-1},\mathrm{id})}(\ker f_4)_{\Q_p}\oplus N(p+1)_{\Q_p}\oplus (\ker d_1)_{\Q_p}\\
&\xrightarrow{(i_1,\rho,\mathrm{id})}(X(2)\oplus \ff)_{\Q_p}\oplus (\ker d_1)_{\Q_p}\\
&\xrightarrow{(i_2,\sigma)}\ff'_{\Q_p}\oplus \ff_{\Q_p}.
\end{split}\]
We fix $\ZpG$-bases of $\ker\hat{f_4} \oplus \ZpG$ and $\ff'_p \oplus \ff_p$, respectively. For $\ker\hat{f_4} \oplus \ZpG$ we take
$(0,1), (v_l, 0)$, $1 \le l \le pm+1$, where $v_l$ runs through the elements specified in Proposition \ref{pm+1generators}. For  $\ff'_p \oplus \ff_p$
we simply use the basis $z_1, z_2, \alpha_kw_j$, $1 \le k \le m, 0 \le j \le p-1$. 
We now compute the matrix $A_{\tilde\theta}$ with respect to these bases.
Following the definition of $\tilde\theta$ we get
\[\begin{split}
\tilde\theta:(0,1)&\mapsto (0,1,1-e_G)\\
&\mapsto(0,\theta_1,1-e_G)\\
&\mapsto(-e_aT_bz_1,1-e_G)\\
&\mapsto -e_aT_bz_1+\frac{1-e_b}{b-1}e_az_1+\frac{1-e_a}{a-1}z_2.
\end{split}\]
Writing the $z_1$-, $z_2$- and $\alpha_iw_j$-components of $\tilde\theta((0,1))$ as a column vector we obtain
\[
w=\left(\begin{matrix}
e_a \left( \frac{1-e_b}{b-1} - T_b \right) \\
\frac{1-e_a}{a-1}\\
0\\
0\\
\vdots\\
0
\end{matrix}\right).
\]
Since $\tilde\theta |_{\ker\hat{f_4}} = i_2 \circ i_1$ is the inclusion, we obtain $A_{\tilde\theta} = (w, \mm)$,
which is the matrix whose columns are $w$ and the columns of the matrix $\mm$ defined at the end of Section \ref{kernel of f_4}.

\noindent
{\bf Case 1:} $\chi=1$ and $\phi=1$.\\
Here $ (\chi\phi)( A_{\tilde\theta})$ is of the form
\[\left(\begin{matrix}
-d&0&0&0&0&0&\cdots&0&0\\
0&0&p&0&0&0&\cdots&0&0\\
0&\chi\phi(v)&0&p\tilde I&0&0&\cdots&0&0\\
0&*&0&0&-I&0&\cdots&0&0\\
0&*&0&0&*&-I&\cdots&0&0\\
\vdots&\vdots&\vdots&\vdots&\vdots&\vdots&\ddots&\vdots&\vdots\\
0&*&0&0&*&*&\cdots&-I&0\\
0&*&-e_2&\chi\phi(\tilde\M)&*&*&\cdots&*&-I
\end{matrix}\right),\]
where we recall that the first component of the vector $v$ is $b^{\tilde m}$.
The determinant is
$(-1)^{(p-1)m}dp^m=dp^m$.

\medskip

\noindent
{\bf Case 2:} $\chi=1$ and $\phi\neq 1$. \\
In this case $ (\chi\phi)( A_{\tilde\theta} )$ is of the form
\[\left(\begin{matrix}
\frac{1}{\phi(b)-1}&0&0&0&0&0&\cdots&0&0\\
0& 1-\phi(b)&p&0&0&0&\cdots&0&0\\
0&\chi\phi(v)&0&p\tilde I&0&0&\cdots&0&0\\
0&*&0&0&-I&0&\cdots&0&0\\
0&*&0&0&*&-I&\cdots&0&0\\
\vdots&\vdots&\vdots&\vdots&\vdots&\vdots&\ddots&\vdots&\vdots\\
0&*&0&0&*&*&\cdots&-I&0\\
0&*&-e_2&\chi\phi(\tilde\M)&*&*&\cdots&*&-I
\end{matrix}\right).\]
The determinant is
\[-(-1)^{(p-1)m}\frac{\phi(b)^{\tilde m}}{\phi(b)-1}p^m=\frac{\phi(b)^{\tilde m}}{1-\phi(b)}p^m.\]

\noindent
{\bf Case 3:} $\chi\neq 1$ and any $\phi$.\\
The matrix $ (\chi\phi)( A_{\tilde\theta})$ is here given by
\[\left(\begin{matrix}
0&\chi(a)-1&0&0&0&\cdots&0&0\\
\frac{1}{\chi(a)-1}& 1-\phi(b)&0&0&0&\cdots&0&0\\
0&\chi\phi(v)&0&(\chi(a)-1)I&0&\cdots&0&0\\
0&*&0&-I&(\chi(a)-1)I&\cdots&0&0\\
0&*&0&*&-I&\cdots&0&0\\
\vdots&\vdots&\vdots&\vdots&\vdots&\ddots&\vdots&\vdots\\
0&*&0&*&*&\cdots&-I&(\chi(a)-1)I\\
0&*&\chi\phi(\M)&*&*&\cdots&*&-I
\end{matrix}\right).\]
Using Lemma \ref{detM} we compute for the determinant
\[
-(-1)^{(p-1)m^2}\det(\chi\phi(\M))(\chi(a)-1)^{m(p-1)}=(-1)^{m+1}\phi(b)^{\tilde m-1}(\chi(a)-1)^{m(p-1)}.
\]
This concludes the proof of Proposition \ref{representative prop}.
\end{proof}
\end{subsection}
\end{section}

\begin{section}{The computation of $T_{N/K}  - [\calL, \rho_N, H_N]$}\label{section_normresolvent_gauss}

In this section we compute a representative of  $T_{N/K}  - [\calL, \rho_N, H_N]$ 
in $K_0(\ZpG, \Qpc) \simeq \Q_p^c[G]^\times / \ZpG^\times$.
The individual terms are described in Section \ref{shape}.

\begin{subsection}{Norm resolvents and Gau\ss\ sums}

If $L/K$ is a finite abelian extension of $p$-adic fields with Galois group $H$ and $\beta \in L$ a normal basis element for
$L/K$, i.e $L = K[H]\beta$, then we define the resolvent of $\beta$ for every irreducible character $\chi$ of $H$ by
\[
(\beta \mid \chi) := \sum_{g \in H} g(\beta)\chi(g^{-1}).
\]
The norm resolvent $\calN_{K/\Qp}(\beta \mid \chi)$ is defined by
\[
\calN_{K/\Qp}(\beta \mid \chi) := \prod_{\omega} (\beta \mid \chi^{\omega^{-1}})^\omega,
\]
where $\omega$ runs through a (right) transversal of $\gal(\Q_p^c / \Qp)$ modulo  $\gal(\Q_p^c / K)$.

For later reference we state the following lemma which is well known and easy to prove.

\begin{lemma}\label{resol lemma}

a) Let $L_2 \supseteq L_1 \supseteq K$ be a tower of extensions of finite abelian $p$-adic fields. Let $\beta \in L_2$ be a normal basis element for $L_2 / K$
and let $\chi$ be an irreducible character of $\gal(L_1/K)$. We write $\psi = \mathrm{inf}_{\gal(L_1/K)}^{\gal(L_2/K)}(\chi)$ for the inflation of $\chi$. Then
\[
(\beta \mid \psi ) = (\Tr_{L_2/L_1}(\beta) \mid \chi).
\]

b) Let $L_1/K$ and $L_2/K$ be finite abelian extensions of $p$-adic fields such that $L_1 \cap L_2 = K$. Let
$\beta_1$ and $\beta_2$ be normal basis elements for $L_1$ and $L_2$, respectively. We write each irreducible character $\chi$ of
$\gal(L_1L_2 / K)$ in the form $\chi = \chi_1 \chi_2$ with irreducible characters of $\gal(L_1/K)$ and $\gal(L_2 /K)$.
Then $\beta := \beta_1 \beta_2$ is a normal basis element for $L_1 L_2 / K$ and
\[
(\beta \mid \chi) = (\beta_1 \mid \chi_1)  (\beta_2 \mid \chi_2).
\]
\end{lemma}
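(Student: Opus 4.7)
The plan is to unwind both statements directly from the definition of the resolvent. In each case the computation reduces to reorganizing the defining sum using the natural coset decomposition of the relevant Galois group.

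For part (a), write $H = \gal(L_2/L_1)$ and pick a set $R$ of representatives in $\gal(L_2/K)$ for the quotient $\gal(L_2/K)/H = \gal(L_1/K)$. Every $g \in \gal(L_2/K)$ has a unique decomposition $g = \tilde h \sigma$ with $\tilde h \in R$ and $\sigma \in H$. Since $\psi$ is inflated from $\chi$, we have $\psi(g^{-1}) = \chi(\bar g^{-1})$ where $\bar g = g|_{L_1}$, and this is constant as $\sigma$ varies over $H$. The plan is therefore to split the defining sum
\[
(\beta \mid \psi) = \sum_{g \in \gal(L_2/K)} g(\beta)\, \psi(g^{-1})
\]
according to the coset of $g$, pull the character value out of the inner sum over $\sigma \in H$, and observe that the inner sum collapses to $\tilde h\bigl(\Tr_{L_2/L_1}(\beta)\bigr)$. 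Since $\Tr_{L_2/L_1}(\beta) \in L_1$, one recovers the right-hand side $(\Tr_{L_2/L_1}(\beta) \mid \chi)$.

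For part (b), the key fact is that $L_1 \cap L_2 = K$ yields the canonical isomorphism $\gal(L_1L_2/K) \simeq \gal(L_1/K) \times \gal(L_2/K)$, with the restriction maps giving the two projections. I would index the resolvent sum over pairs $(g_1, g_2)$ in this product:
\[
(\beta \mid \chi) = \sum_{(g_1,g_2)} (g_1 g_2)(\beta_1 \beta_2)\, \chi_1(g_1^{-1})\chi_2(g_2^{-1}).
\]
Because $g_2$ fixes $L_1$ and $g_1$ fixes $L_2$, we have $(g_1g_2)(\beta_1\beta_2) = g_1(\beta_1)\, g_2(\beta_2)$, so the double sum factors as the product of the two individual resolvents, giving the asserted identity. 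To confirm that $\beta$ is actually a normal basis element for $L_1L_2/K$, I would then observe that by Noether's theorem (or by Perlis' criterion) it suffices to check that $(\beta \mid \chi) \neq 0$ for every irreducible character $\chi$ of $\gal(L_1L_2/K)$; but the factorization just obtained, combined with the nonvanishing of the resolvents $(\beta_i \mid \chi_i)$ inherited from the hypothesis that each $\beta_i$ is a normal basis element, makes this immediate.

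Neither part presents a real obstacle: the only thing to be careful about is keeping track of whether characters are evaluated upstairs or downstairs, and using the product decomposition of the Galois group in the right direction. The formulae for the norm resolvents $\calN_{K/\Qp}(\beta \mid \chi)$ then follow by applying the corresponding identities character-by-character under the transversal of $\gal(\Qpc/\Qp)$ modulo $\gal(\Qpc/K)$ appearing in the definition, since this transversal is the same on both sides of each identity.
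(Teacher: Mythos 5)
Your proof is correct and is exactly the ``easy verification'' the paper leaves to the reader: coset decomposition of the resolvent sum for (a), and the product decomposition $\gal(L_1L_2/K)\simeq\gal(L_1/K)\times\gal(L_2/K)$ for (b), with the normal-basis claim following from the nonvanishing of all resolvents. Only a minor quibble: the criterion you invoke for the last step is the standard group-determinant/resolvent criterion (the group determinant factors as the product of the resolvents over all characters), not Noether's theorem, which concerns integral normal bases in tame extensions.
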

\begin{proof}
Easy verification.
\end{proof}

Given an extension $L/K$ of local fields and an irreducible character $\psi$ of $\gal(L/K)$ 
we will use the short notation $\psi(\alpha)$ to denote $\psi((\alpha,L/K))$ 
where  $(\alpha,L/K)$ is the Artin symbol for $\alpha\in K^\times$.

\begin{lemma}\label{inflationgauss}
  Let $L/K$ be a finite abelian wildly and weakly ramified extension with group $H$. Suppose that $K/\Qp$ is unramified. Let $\chi, \phi$ denote
irreducible characters of $H$ and suppose that $\phi$ is unramified. Then
\[
\tau_K(\phi) = 1 \qquad\text{ and }\qquad \tau_K(\phi\chi) = \phi(p^{-2}) \tau_K(\chi).
\]
\end{lemma}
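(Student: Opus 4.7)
The plan is to derive both identities from standard properties of Martinet's local Galois Gauss sum \cite{Martinet77}, making essential use of the hypothesis that $K/\Qp$ is unramified.

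For the first identity $\tau_K(\phi)=1$: since $\phi$ is unramified its Artin conductor $\mathfrak{f}(\phi)$ equals $\calO_K$, and since $K/\Qp$ is unramified the different $\mathfrak{d}_{K/\Qp}$ is $\calO_K$ as well. Substituting these triviality conditions into Martinet's explicit formula for $\tau_K(\phi)$---a sum indexed by $\calO_K^{\times}/U_K^{(n(\phi))}$ weighted by $\phi$ and the standard additive character of $K$---one sees the sum collapse to a single term whose argument is a unit, so the whole expression evaluates to $1$. This identity is essentially a direct reading of the definition.

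For the second identity I would invoke the unramified-twist transformation formula for Galois Gauss sums: for any unramified character $\phi$ of $\gal(K^c/K)$ and any character $\chi$,
\[
\tau_K(\phi\chi)\;=\;\phi\bigl(c^{-1}\bigr)\,\tau_K(\chi),
\]
where $c$ is any generator of the fractional ideal $\mathfrak{f}(\chi)\,\mathfrak{d}_{K/\Qp}$ and $\phi$ is interpreted as a character of $K^{\times}$ via local class field theory. I would justify this by substitution into the definition: twisting by an unramified $\phi$ leaves the additive-character factor unchanged and simply multiplies the multiplicative factor by $\phi(c^{-1})$, since $\phi$ is trivial on units. Specializing to our situation, $\mathfrak{d}_{K/\Qp}=\calO_K$ because $K/\Qp$ is unramified, and the hypotheses that $L/K$ is weakly and wildly ramified with $H_0=H_1$ of order $p$ force a wildly ramified irreducible character $\chi$ of $H$ to have Artin conductor exponent $n(\chi)=2$, as follows from the standard formula $n(\chi)=\sum_{i\ge 0}|H_i|/|H_0|\cdot(1-\chi(H_i))$. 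Hence $\mathfrak{f}(\chi)=\mathfrak{p}_K^{2}=(p^{2})$, using that $p$ is a uniformizer of $K$, and one can take $c=p^{2}$ to obtain $\phi(c^{-1})=\phi(p^{-2})$ as claimed.

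The main obstacle I expect is careful bookkeeping of normalisations: one must pin down the direction of the reciprocity map (arithmetic versus geometric Frobenius) used by Martinet and the sign of the exponent of $\phi$ in the twist formula, since a sign-flip in the Artin map would turn $\phi(p^{-2})$ into $\phi(p^{2})$. Beyond that, the identity holds as an automatic consequence of the transformation law together with the ramification computation $n(\chi)=2$, and I would verify the convention at one test case (for instance a character of conductor exactly $2$ inflated from an appropriate subquotient).
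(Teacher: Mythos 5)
Your proof is correct and follows essentially the same route as the paper, which simply cites Pickett--Vinatier, Prop.~3.8: that proposition is precisely the explicit abelian Gauss-sum computation you carry out (trivial different since $K/\Qp$ is unramified, the unramified-twist law, and conductor exponent $n(\chi)=2$ forced by weak and wild ramification), and your convention $\phi(c^{-1})$ with $c=p^{2}$ matches the paper's normalisation. The only caveat --- shared with the paper's own formulation --- is that the second identity requires $\chi$ to be (wildly) ramified, which your conductor computation makes explicit and which is how the lemma is actually applied later.
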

\begin{proof}
This is a simple reformulation of \cite[Prop.~3.8]{PickettVinatier}. 
If $K/\Qp$ is an arbitrary finite extension, then we let $D_K = \pi_K^s\OK$ denote the absolute different of $K/\Qp$.
Then $\tau_K(\phi) = \phi(\pi_K^{-s})$ by the definition of $s$ and local Galois Gau\ss\ sums. If $K/\Qp$ is unramified, then
$s=0$ and we obtain the first equality. 
The second equality is the last displayed equality in the proof of \cite[Prop.~3.8]{PickettVinatier} with $s=0$ and $\pi_K=p$.
\end{proof}

Following the arguments of \cite[bottom of page 1188]{PickettVinatier} we apply Corollary 3.4 of loc.cit. with $\pi = p$. 
So there exist extensions $\tilde M$ and $\tilde K'$ 
such that $\tilde M /K$ is a weakly and wildly ramified extension of degree $p$, the extension $\tilde K'/K$ is unramified 
and such that we have a diagram of the form
\[
\xymatrix{
& \tilde N \ar@{-}[dl] \ar@{-}[d] \ar@{-}[dr] &  \\
\tilde M \ar@{-}[dr]  & N \ar@{-}[d]  & \tilde K' \ar@{-}[dl] \\
&K&
 }\]
Moreover we may assume that $\tilde N /N$ is unramified.
By \cite[V, Cor.~(5.6)]{Neukirch92} $p$ belongs to the norm group $\calN_{\tilde M/K}(\tilde M^\times)$, 
so that we can apply  \cite[Th.~2]{PickettVinatier}.

\begin{lemma}\label{PV Th 2 lemma}
  There exist a normal basis generator $\alpha_{\tilde M}$ of the square root of the inverse different of $\tilde M / K$ and choices in the definitions of the norm resolvents
such that for all irreducible characters $\tilde \chi$ of $\gal(\tilde M / K)$ we have 
\[
\frac{\calN_{K/\Qp}(\alpha_{\tilde M} \mid \tilde \chi)}{\tau_K(\tilde\chi)} = 
\begin{cases} 1, & \tilde\chi = \tilde\chi_0, \\ p^{-m} \tilde\chi(4), & \tilde\chi \ne \tilde\chi_0.
\end{cases}
\]
\end{lemma}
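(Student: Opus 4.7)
The lemma is essentially a direct application of \cite[Th.~2]{PickettVinatier}; the surrounding discussion has already verified all of its hypotheses (weak and wild ramification of $\tilde M/K$, unramified base $K/\Qp$, and $p\in\calN_{\tilde M/K}(\tilde M^\times)$), so the choice of uniformizer $\pi=p$ in that theorem is legitimate. My plan is thus to quote loc.\,cit.\ for the ramified characters and to dispatch the trivial character separately.

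For the case $\tilde\chi\neq\tilde\chi_0$, I would invoke \cite[Th.~2]{PickettVinatier} to produce simultaneously the normal basis generator $\alpha_{\tilde M}$ of the square root of the inverse different of $\tilde M/K$ and a compatible set of choices of roots appearing in the definition of the norm resolvent. The identity of loc.\,cit.\ gives the ratio $\calN_{K/\Qp}(\alpha_{\tilde M}\mid\tilde\chi)/\tau_K(\tilde\chi)$ in the form stated there, and the work is to translate this into $p^{-m}\tilde\chi(4)$ using $m=[K:\Qp]$ and unwinding the precise normalizations. The factor $p^{-m}$ should come out as the contribution of the $m$ Galois conjugates in the definition of the norm resolvent (using that the absolute different of $K$ is trivial), while the factor $\tilde\chi(4)$ traces back to the $2$ implicit in the construction of the square root of the inverse different, together with the fact that $\tilde\chi$ acts nontrivially on the ramified generator.

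For the trivial character $\tilde\chi_0$, both sides equal $1$: one has $\tau_K(\tilde\chi_0)=1$ by Lemma \ref{inflationgauss} applied to the unramified character $\tilde\chi_0$, and on the resolvent side the generator $\alpha_{\tilde M}$ from \cite{PickettVinatier} may be rescaled by a unit of $\calO_K$ so that $\calN_{K/\Qp}\bigl(\Tr_{\tilde M/K}(\alpha_{\tilde M})\bigr)=1$. Since $(\alpha_{\tilde M}\mid\tilde\chi_0)=\Tr_{\tilde M/K}(\alpha_{\tilde M})$, this gives the claimed value. A rescaling by a unit multiplies the non-trivial character resolvents only by character values, which can be absorbed into the choices allowed in loc.\,cit.

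The only real obstacle is bookkeeping: the several independent choices of roots of unity, square roots and embeddings entering both sides of the formula in \cite[Th.~2]{PickettVinatier} must be kept consistent so that the ratio comes out precisely in the displayed form. No new conceptual input beyond loc.\,cit.\ is required.
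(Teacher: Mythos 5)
Your identification of \cite[Th.~2]{PickettVinatier} as the engine is correct, and your verification of its hypotheses matches the paper's setup. But the proposal leaves a genuine gap at the heart of the lemma. Theorem 2 of loc.\,cit.\ does not give the ratio $\calN_{K/\Qp}(\alpha_{\tilde M}\mid\tilde\chi)/\tau_K(\tilde\chi)$ directly; it gives $\calN_{K/\Qp}(\alpha_{\tilde M}\mid\tilde\chi)\,\tau_K^\star(\tilde\chi-\tilde\chi^2)=1$, where $\tau_K^\star$ is the \emph{modified twisted} Galois Gau\ss\ sum evaluated at the degree-zero virtual character $\tilde\chi-\tilde\chi^2$. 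The entire content of the lemma is the comparison of $\tau_K^\star(\tilde\chi-\tilde\chi^2)$ with the ordinary $\tau_K(\tilde\chi)$, and your proposal replaces this by heuristics ($p^{-m}$ ``from the $m$ Galois conjugates'', $\tilde\chi(4)$ ``from the $2$ in the square root of the inverse different''). The paper instead invokes \cite[Prop.~3.9]{PickettVinatier}, which gives $\tau_K^\star(\tilde\chi-\tilde\chi^2)=\tilde\chi\bigl(c_{\tilde\chi}/(4c_{K,2})\bigr)\psi_K(c_{\tilde\chi}^{-1})^{-1}$ and $\tau_K(\tilde\chi)=p^m\tilde\chi(c_{\tilde\chi}^{-1})\psi_K(c_{\tilde\chi}^{-1})$; one then chooses $c_{K,2}=p^2$ and uses \emph{a second time} that $p\in\calN_{\tilde M/K}(\tilde M^\times)$ (so that $\tilde\chi(p^2)=1$) to obtain $\tau_K^\star(\tilde\chi-\tilde\chi^2)=p^m\tau_K(\tilde\chi)^{-1}\tilde\chi(4)^{-1}$. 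Without carrying out this translation the factors $p^{-m}$ and $\tilde\chi(4)$ are not established, and your attribution of $p^{-m}$ to the norm resolvent rather than to the conductor term in the Gau\ss\ sum formula is not accurate.

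Your treatment of the trivial character is also off. No rescaling is needed: for $\tilde\chi=\tilde\chi_0$ the virtual character $\tilde\chi-\tilde\chi^2$ vanishes, so $\tau_K^\star(0)=1$ and Theorem 2 already yields $\calN_{K/\Qp}(\alpha_{\tilde M}\mid\tilde\chi_0)=1$ for the \emph{same} $\alpha_{\tilde M}$ used for the other characters. Worse, the rescaling you propose does not behave as claimed: replacing $\alpha_{\tilde M}$ by $u\alpha_{\tilde M}$ with $u\in\calO_K^\times$ multiplies \emph{every} norm resolvent $\calN_{K/\Qp}(\alpha_{\tilde M}\mid\tilde\chi)$ by $\calN_{K/\Qp}(u)$, which is a unit of $\Zp$ and not a character value, so it cannot be ``absorbed into the choices'' for the non-trivial characters; such a rescaling would destroy the identity already arranged there unless $\calN_{K/\Qp}(u)=1$.
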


\begin{proof}
By \cite[Th.~2]{PickettVinatier} and using the notation of loc. cit. we may assume that 
\begin{equation}\label{abc}
\mathcal N_{K/\Q_p}(\alpha_{\tilde M}|\tilde\chi)\tau_K^\star(\tilde\chi-\tilde\chi^2) = 1.
\end{equation}
The proof of Lemma \ref{PV Th 2 lemma} now follows immediately from the definition of $\tau_K^\star$. In a little more detail,
if $ \tilde\chi = \tilde\chi_0$ is the trivial character, then $\tau_K^\star(\chi - \chi^2) = 1$ and also $\tau_K(\tilde\chi_0) = 1$. For $\tilde\chi \ne \tilde\chi_0$
we have 
\[
\tau_K^\star(\tilde\chi-\tilde\chi^2)=\tilde\chi\left(\frac{c_{\tilde\chi}}{4c_{K,2}}\right)\psi_K(c_{\tilde\chi}^{-1})^{-1},
\]
by \cite[Prop.~3.9]{PickettVinatier}. Furthermore,
$\tau_K(\tilde\chi)=p^m\tilde\chi(c_{\tilde\chi}^{-1})\psi_K(c_{\tilde\chi}^{-1})$,
by the last displayed formula in the proof of \cite[Prop.~3.9]{PickettVinatier}. In our case we can choose $c_{K,2} = p^2$ and since $p$ is in the norm
group of $\tilde M/K$ we have $\tilde\chi(c_{K,2}) = 1$. Hence we obtain $\tau_K^\star(\tilde\chi-\tilde\chi^2) = p^m \tau_K(\tilde\chi)^{-1}\tilde\chi(4)^{-1}$.
The result now follows from (\ref{abc}).
\end{proof}

We now fix $\alpha_{\tilde M}$ as in Lemma \ref{PV Th 2 lemma}. Choose an integral normal basis element $\tilde\theta_2$ of $\tilde K'/K$ such that
$\Tr_{\tilde K'/K}(\tilde\theta_2) = 1$ and set
\[
\alpha_M := \Tr_{\tilde N/M}( \alpha_{\tilde M} \tilde\theta_2).
\]
It is easy to verify that $\alpha_M$ is an $\OKG$-generator of the square root of the inverse different of $M/K$.

\begin{lemma}\label{PickettVinatierstrong}
  Let $\chi$ be an irreducible character of $\gal(M/K)$. Then
\[\frac{\mathcal N_{K/\Q_p}(\alpha_{M}|\chi)}{\tau_K(\chi)}=\begin{cases}1&\text{if $\chi=\chi_0$}\\
p^{-m}\chi(4)&\text{if $\chi\neq\chi_0$.}\end{cases}\]
\end{lemma}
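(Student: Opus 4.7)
The plan is to reduce the identity for $M/K$ to Lemma~\ref{PV Th 2 lemma} (the analogous identity for $\tilde M/K$) by exploiting the defining relation $\alpha_M = \Tr_{\tilde N/M}(\alpha_{\tilde M}\tilde\theta_2)$ together with the multiplicativity of resolvents (Lemma~\ref{resol lemma}) and the behaviour of Gau\ss\ sums under unramified twists (Lemma~\ref{inflationgauss}).

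First I would let $\psi$ denote the inflation of $\chi$ from $\gal(M/K)$ to $\gal(\tilde N/K)$ and apply Lemma~\ref{resol lemma}(a) to the tower $K \subseteq M \subseteq \tilde N$ to obtain $(\alpha_M | \chi) = (\alpha_{\tilde M}\tilde\theta_2 | \psi)$. Because $\tilde M/K$ is totally ramified while $\tilde K'/K$ is unramified, we have $\tilde M \cap \tilde K' = K$, and $\psi$ factors as $\chi_1\chi_2$ with $\chi_1 \in \widehat{\gal(\tilde M/K)}$ and $\chi_2 \in \widehat{\gal(\tilde K'/K)}$. Lemma~\ref{resol lemma}(b) together with multiplicativity of the norm from $K$ to $\Q_p$ would then give
\[\calN_{K/\Q_p}(\alpha_M | \chi) = \calN_{K/\Q_p}(\alpha_{\tilde M} | \chi_1)\cdot \calN_{K/\Q_p}(\tilde\theta_2 | \chi_2).\]

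On the Gau\ss\ sum side, inflation invariance gives $\tau_K(\chi) = \tau_K(\psi)$, and Lemma~\ref{inflationgauss} (with $\chi_2$ as the unramified twist) produces $\tau_K(\chi) = \chi_2(p^{-2})\,\tau_K(\chi_1)$; note that a non-trivial $\chi$ forces $\chi_1$ to be non-trivial, since otherwise $\psi$ would be unramified, contradicting that $\chi$ is a non-trivial character of the totally ramified extension $M/K$. Combining these identities with Lemma~\ref{PV Th 2 lemma}, and observing that $\chi(4) = \chi_1(4)\chi_2(4) = \chi_1(4)$ (because $\chi_2$ is an unramified character, hence trivial on units, and $4 \in \OK^\times$ for odd $p$), the entire computation reduces to the single identity
\[\calN_{K/\Q_p}(\tilde\theta_2 | \chi_2) = \chi_2(p^{-2})\]
for every character $\chi_2 \in \widehat{\gal(\tilde K'/K)}$; the case $\chi = \chi_0$ drops out immediately from $\Tr_{\tilde K'/K}(\tilde\theta_2) = 1$.

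The main obstacle will be establishing this last identity, which is a purely tame assertion about the unramified extension $\tilde K'/K$ and the normalisation $\Tr_{\tilde K'/K}(\tilde\theta_2) = 1$. I would attack it by an explicit computation of the norm resolvent, using that the embeddings of $K$ into $\Q_p^c$ are given by powers of Frobenius and that $\tau_K(\chi_2) = 1$ by the first part of Lemma~\ref{inflationgauss}, and by matching the ``choices'' implicit in the definition of $\calN_{K/\Q_p}(\tilde\theta_2 | \chi_2)$ with those already fixed in the statement of Lemma~\ref{PV Th 2 lemma}. Getting this cancellation right, and ensuring compatibility with the freedom built into the Pickett--Vinatier construction, is where the real technical work will lie.
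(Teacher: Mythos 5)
Your opening reductions coincide with the paper's: both arguments start from $\alpha_M=\Tr_{\tilde N/M}(\alpha_{\tilde M}\tilde\theta_2)$, apply Lemma \ref{resol lemma} to get $(\alpha_M\mid\chi)=(\alpha_{\tilde M}\mid\chi_1)(\tilde\theta_2\mid\chi_2)$ for the decomposition $\mathrm{inf}(\chi)=\chi_1\chi_2$, use inflation invariance of Gau\ss\ sums, the identity $\chi(4)=\chi_1(4)$, and then Lemma \ref{PV Th 2 lemma}. The point of divergence is the unramified component: the paper's proof asserts that $\mathrm{inf}(\chi)=\tilde\chi\tilde\phi_0$, i.e.\ that the $\gal(\tilde K'/K)$-component is the \emph{trivial} character (this is where the unramifiedness of $\tilde N/M$ is invoked), so that the second resolvent is simply $(\tilde\theta_2\mid\tilde\phi_0)=\Tr_{\tilde K'/K}(\tilde\theta_2)=1$ and $\tau_K(\chi)=\tau_K(\tilde\chi)$ follows from inflation invariance alone; Lemma \ref{inflationgauss} only enters later, in Proposition \ref{gaussnormresolvent}. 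You instead keep a general $\chi_2$ and are left with the residual identity $\calN_{K/\Qp}(\tilde\theta_2\mid\chi_2)=\chi_2(p^{-2})$.

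That residual identity is a genuine gap, not a deferred technicality: in the generality in which you state it (all characters $\chi_2$, and the trace-one integral normal basis generator $\tilde\theta_2$ fixed in the paper, which is otherwise arbitrary) it is false. Replacing $\tilde\theta_2$ by a Galois conjugate $\sigma\tilde\theta_2$, which is again an integral normal basis generator of trace one, multiplies $\calN_{K/\Qp}(\tilde\theta_2\mid\chi_2)$ by $\prod_\omega \omega\bigl(\chi_2^{\omega^{-1}}(\sigma)\bigr)$ (for $K=\Qp$ this is the value of $\chi_2$ at $\sigma$, which is $\neq 1$ for non-trivial $\chi_2$), while the right-hand side is unchanged; more plainly, for non-trivial $\chi_2$ the resolvent of a generic trace-one generator is just some unit, not a prescribed root of unity. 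So no amount of explicit computation can establish the identity as formulated, and your proof is incomplete precisely at its crux. To repair it along your lines you must either show that the only $\chi_2$ that actually occur as unramified components of $\mathrm{inf}(\chi)$ are trivial --- this is exactly the assertion that carries the weight in the paper's proof, and it amounts to $\mathrm{inf}(\chi)$ being trivial on $\gal(\tilde N/\tilde M)$, i.e.\ a statement about the position of $M$ relative to $\tilde M$ inside $\tilde N$, not a formal consequence of $\tilde K'/K$ being unramified --- or else build the required resolvent values into a much more specific choice of $\tilde\theta_2$, which the normalisation $\Tr_{\tilde K'/K}(\tilde\theta_2)=1$ alone does not provide.
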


 \begin{proof}
 We write $\mathrm{inf}$ for $\mathrm{inf}_{\gal(M/K)}^{\gal(\tilde N /K)}$. Since $\tilde N/M$ is unramified we see that
for each irreducible character
 $\chi$ of $\gal(M/K)$ we obtain $\mathrm{inf}(\chi) = \tilde\chi \tilde\phi_0$, where $\tilde\phi_0$ is the trivial character
 of $\gal(\tilde K' / K)$ and $\tilde\chi$ is a uniquely determined irreducible character of $\gal(\tilde M /K)$.
Moreover, $\chi = \chi_0$ if and only if  $\tilde\chi = \tilde\chi_0$.

 By Lemma \ref{resol lemma} we have
\[
(\alpha_M \mid \chi) = (\alpha_{\tilde M}\tilde\theta_2 \mid \mathrm{inf}(\chi) ) = (\alpha_{\tilde M}\mid \tilde\chi) (\tilde\theta_2 \mid \tilde\phi_0)
= (\alpha_{\tilde M}\mid \tilde\chi)
\]
because $(\tilde\theta_2 \mid \tilde\phi_0) = \Tr_{\tilde K' /K}\tilde\theta_2 = 1$. Recall that local Galois Gau\ss\ sums are invariant under inflation of characters (see e.g. \cite[(5)]{PickettVinatier} and \cite[p. 18]{Martinet77}). We therefore get from
$\mathrm{inf}(\chi) = \mathrm{inf}_{\gal(\tilde M/K)}^{\gal(\tilde N /K)} (\tilde \chi)$
\[
\frac{\mathcal N_{K/\Q_p}(\alpha_{M}|\chi)}{\tau_K(\chi)} =
\frac{\mathcal N_{K/\Q_p}(\alpha_{\tilde M}\theta_2 | \mathrm{inf}\chi)}{\tau_K(\mathrm{inf}\chi)}
=\frac{\mathcal N_{K/\Q_p}(\alpha_{\tilde M} | \tilde\chi)}{\tau_K(\tilde\chi)}.
\]

To conclude by Lemma \ref{PV Th 2 lemma} we notice that 
\[
\tilde\chi(4) = \mathrm{inf}_{\gal(\tilde M / K)}^{\gal(\tilde N / K)}(\tilde\chi)((4|\tilde N/K))=
\mathrm{inf}_{\gal(M / K)}^{\gal(\tilde N / K)}(\chi)((4|\tilde N/K)) = \chi(4).
\]

\end{proof}

\begin{prop}\label{gaussnormresolvent}
We assume the setting introduced in Section \ref{shape}. 
Let $\psi = \chi\phi$ be a character of $G$. Then
\[\frac{\mathcal N_{K/\Q_p}(p^2\alpha_{M}\theta_2|\chi\phi)}{\tau_K(\phi\chi)}=\begin{cases}p^{2m}\mathcal 
N_{K/\Q_p}(\theta_2|\phi)&\text{if $\chi=\chi_0$}\\p^{m}\chi(4)\mathcal N_{K/\Q_p}(\theta_2|\phi)\phi(p^{2})&\text{if $\chi\neq\chi_0$,}\end{cases}\]
\end{prop}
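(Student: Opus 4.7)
The plan is a short computation that uses multiplicativity of (norm) resolvents along the tower $N = MK'$ together with the two explicit input formulas already at our disposal, namely Lemma \ref{PickettVinatierstrong} on the ramified factor and Lemma \ref{inflationgauss} on the unramified factor.

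First I would split the resolvent. Because $M/K$ is totally ramified of degree $p$ while $K'/K$ is unramified of degree $d$, one has $M \cap K' = K$, so Lemma \ref{resol lemma}(b) applied to $\beta_1 = \alpha_M$ and $\beta_2 = \theta_2$ gives $(\alpha_M\theta_2 \mid \chi\phi) = (\alpha_M \mid \chi)(\theta_2 \mid \phi)$. Since $\chi$ and $\phi$ live on disjoint direct factors of $G = \gal(M/K)\times\gal(K'/K)$, the identity $(\chi\phi)^{\omega^{-1}} = \chi^{\omega^{-1}}\phi^{\omega^{-1}}$ holds for every $\omega\in\gal(\Qpc/\Qp)$; feeding this into the definition of the norm resolvent yields
\[
\calN_{K/\Qp}(\alpha_M\theta_2 \mid \chi\phi) = \calN_{K/\Qp}(\alpha_M \mid \chi)\,\calN_{K/\Qp}(\theta_2 \mid \phi).
\]
The scalar $p^2\in\Qp$ is fixed by every $\omega$ in the transversal of $\gal(\Qpc/\Qp)/\gal(\Qpc/K)$, and this transversal has cardinality $m = [K:\Qp]$, so pulling $p^2$ out of the norm resolvent contributes a factor $p^{2m}$.

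Second I would substitute and read off the two cases. For $\chi = \chi_0$, Lemma \ref{PickettVinatierstrong} gives $\calN_{K/\Qp}(\alpha_M \mid \chi_0) = \tau_K(\chi_0) = 1$ and the first part of Lemma \ref{inflationgauss} gives $\tau_K(\chi_0\phi) = \tau_K(\phi) = 1$, producing the first case immediately. For $\chi \neq \chi_0$, Lemma \ref{PickettVinatierstrong} gives $\calN_{K/\Qp}(\alpha_M \mid \chi) = p^{-m}\chi(4)\tau_K(\chi)$, while the second part of Lemma \ref{inflationgauss} gives $\tau_K(\chi\phi) = \phi(p^{-2})\tau_K(\chi)$; the two copies of $\tau_K(\chi)$ cancel in the required ratio, the powers of $p$ collapse via $p^{2m}\cdot p^{-m} = p^m$, and $\phi(p^{-2})^{-1} = \phi(p^2)$ migrates from the denominator to the numerator, producing the second case exactly as stated.

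I do not anticipate a substantive obstacle here: the one point requiring care is that the choices implicit in the definition of the two sub-resolvents (the transversal of $\gal(\Qpc/\Qp)/\gal(\Qpc/K)$ and the embedding $\Q^c\hookrightarrow\Qpc$) must be made consistently, so that the multiplicative decomposition of the norm resolvent holds on the nose rather than up to a harmless Galois conjugate. This is purely bookkeeping and is arranged once at the outset.
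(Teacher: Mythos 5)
Your proof is correct and follows exactly the route the paper takes: the paper's own proof is the one-line citation of Lemma \ref{resol lemma}, Lemma \ref{inflationgauss} and Lemma \ref{PickettVinatierstrong}, and your write-up simply supplies the details (multiplicative splitting of the norm resolvent, the factor $p^{2m}$ from the scalar $p^2$, and the case-by-case substitution), all of which check out.
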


\begin{proof}
The result follows from Lemma \ref{resol lemma}, Lemma \ref{inflationgauss} and Lemma \ref{PickettVinatierstrong}.
\end{proof}

\end{subsection}

\begin{subsection}{A representative for $T_{N/K} - [\calL, \rho_n, H_N]$}

In the following proposition we describe a representative in $\Q_p^c[G]^\times$ for the element $T_{N/K} - [\calL, \rho_n, H_N]$.

\begin{prop}\label{representative prop 2}
We assume the setting introduced in Section \ref{shape}. 
For $\calL = \p_N^{p+1}$ the element 
$T_{N/K} - [\calL, \rho_n, H_N]$ 
is represented by $\eta \in \Q_p^c[G]^\times$ where 
\[
(\eta)_{\chi\phi}=\begin{cases}p^{-2m}\mathcal N_{K/\Q_p}(\theta_2|\phi)^{-1}\delta_K^{-1}&\text{if $\chi=\chi_0$}\\
             p^{-m}\chi(4)^{-1}\mathcal N_{K/\Q_p}(\theta_2|\phi)^{-1}\phi(b)^{2}\delta_K^{-1}&\text{if $\chi\neq\chi_0$,}
\end{cases}
\]
 where $\delta_K$ is a square root of the discriminant of $K$.
\end{prop}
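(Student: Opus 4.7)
The plan is to express both $T_{N/K}$ and $[\calL,\rho_N,H_N]$ in terms of the Galois Gauss sums $\tau_K(\chi\phi)$ and the norm resolvent $\calN_{K/\Q_p}(p^2\alpha_M\theta_2\mid\chi\phi)$, which are related by Proposition \ref{gaussnormresolvent}. The first task is to identify a normal $\OKG$-basis of $\calL = \p_N^{p+1}$. By Lemma \ref{lemma2Bley}(a), $\p_N = \OKG\theta$ with $\theta=\theta_1\theta_2$, and since $M/K$ is totally ramified of degree $p$ and $N/M$ is unramified, we have $p\calO_N = \p_N^p$, so $p\theta_1\theta_2$ is an $\OKG$-generator of $\calL$. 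Hence $[\calL,\rho_N,H_N]$ is represented by $\bigl(\delta_K\calN_{K/\Q_p}(p\theta_1\theta_2\mid\chi\phi)\bigr)_{\chi\phi}$.

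Next I would pass from the generator $p\theta_1\theta_2$ to $p^2\alpha_M\theta_2$. Both $\theta_1$ and $p\alpha_M$ are $\calO_K[\gal(M/K)]$-generators of $\p_M$: $\theta_1$ by Lemma \ref{defntheta1}, and $p\alpha_M$ because $\alpha_M$ generates the square root $\p_M^{-(p-1)}$ of the inverse different of $M/K$ and $p\calO_M=\p_M^p$. So there is a unit $u\in\calO_K[\gal(M/K)]^\times$ with $p\theta_1=u\cdot p^2\alpha_M$. A short computation, using $\chi^{\omega^{-1}}(h)^\omega=\chi(h)$ for $h\in\gal(M/K)$, gives $\calN_{K/\Q_p}(p\theta_1\mid\chi)=\chi(N_{K/\Q_p}(u))\calN_{K/\Q_p}(p^2\alpha_M\mid\chi)$, where $N_{K/\Q_p}(u):=\prod_\omega\omega(u)$ lies in $\Z_p[\gal(M/K)]^\times\subseteq\ZpG^\times$. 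Combined with Lemma \ref{resol lemma}(b) applied to the linearly disjoint extensions $M/K$ and $K'/K$, this shows $[\calL,\rho_N,H_N]$ is also represented in $K_0(\ZpG,\Qpc)$ by $\bigl(\delta_K\calN_{K/\Q_p}(p^2\alpha_M\theta_2\mid\chi\phi)\bigr)_{\chi\phi}$.

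For $T_{N/K}$ I would invoke inductivity of local Galois Gauss sums for the unramified extension $K/\Q_p$: every character of $\gal(K/\Q_p)$ is unramified and has trivial Galois Gauss sum, so the Langlands constant $\lambda(K/\Q_p)$ equals $1$, and hence $\tau_{\Q_p}(i_K^{\Q_p}\chi\phi)=\tau_K(\chi\phi)$; consequently $T_{N/K}$ is represented by $\bigl(\tau_K(\chi\phi)\bigr)_{\chi\phi}$. Taking the quotient of the two representatives and substituting Proposition \ref{gaussnormresolvent}, together with $\phi(p^2)=\phi(b)^{-2}$ (which follows from $b|_{\Kur}=F^{-1}$ and the identification of the Artin symbol $(p,K'/K)$ with $b^{-1}$ in $\gal(K'/K)$), yields the two cases in the statement.

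The main obstacle lies in the second step: one must verify that the change of generator from $p\theta_1$ to $p^2\alpha_M$ is effected by a unit that descends to $\ZpG^\times$ rather than merely $\OKG^\times$. This rests on the identity $\prod_\omega\chi^{\omega^{-1}}(u)^\omega=\chi(N_{K/\Q_p}(u))$ and the fact that the $\gal(K/\Q_p)$-norm sends $\calO_K[\gal(M/K)]^\times$ into $\Z_p[\gal(M/K)]^\times\hookrightarrow\ZpG^\times$, so that the discrepancy becomes trivial modulo $\ZpG^\times$.
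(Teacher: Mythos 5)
Your proposal is correct and follows essentially the same route as the paper: represent $[\calL,\rho_N,H_N]$ via Breuning's resolvent formula, reduce $\tau_{\Q_p}(i_K^{\Q_p}\chi\phi)$ to $\tau_K(\chi\phi)$ using inductivity in degree zero and the unramifiedness of $K/\Q_p$, and conclude with Proposition \ref{gaussnormresolvent} and $\phi(p^2)=\phi(b)^{-2}$. The only difference is cosmetic: the paper directly notes $\calL=\OKG(p^2\alpha_M\theta_2)$ (since $\alpha_M$ generates the square root of the inverse different $\p_M^{-(p-1)}$), whereas you pass from the generator $p\theta_1\theta_2$ by a unit of $\calO_K[\gal(M/K)]$ and check that its ``norm'' lands in $\Z_p[G]^\times$ --- a valid but avoidable detour.
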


\begin{proof}
Recall that $\calL = \OKG (p^2 \alpha_M \theta_2)$.
As already explained in Section \ref{shape} the element $[\calL, \rho_n, H_N]$ is then represented by 
$\left( (\delta_K \calN_{K/\Qp} (p^2 \alpha_M \theta_2 \mid \chi\phi) \right)_{\chi, \phi}$. 

By definition the term $T_{N/K}$ is represented by $\left( \tau_\Qp(i_K^\Qp(\chi\phi) \right)_{\chi, \phi}$. 
Since Gau\ss\  sums are inductive in degree zero and $\tau_K(\phi) = 1$ for unramified characters by Lemma \ref{inflationgauss} 
we have
\[
\tau_{\Q_p}(i^{\Q_p}_K\chi\phi)=\tau_{\Q_p}(i^{\Q_p}_K(\chi\phi-\chi_0\phi_0))\tau_{\Q_p}(i^{\Q_p}_K\chi_0\phi_0)
=\tau_K(\chi\phi)\tau_{\Q_p}(i^{\Q_p}_K\chi_0\phi_0).
\]
Since $K/\Qp$ is unramified, $i^{\Q_p}_K(\chi_0\phi_0)$ is a sum of unramified characters so that 
$\tau_{\Q_p}(i^{\Q_p}_K\chi_0\phi_0) = 1$  and we obtain $\tau_{\Q_p}(i^{\Q_p}_K\chi\phi) = \tau_K(\chi\phi)$.
Furthermore,
$\phi(p^2)=\phi((p^2,K'/K))=\phi(F^2)=\phi(b)^{-2}$ by \cite[XIII, \S 4, Prop.~13]{SerreLocalFields} and the definition of $b$.
Combining these observations with Proposition \ref{gaussnormresolvent} concludes the proof of the proposition.
\end{proof}

\end{subsection}
\end{section}

\begin{section}{Proof of Theorem \ref{main theorem}}\label{section_the_proof}

In our setting the correction term $M_{N/K}$ is explicitly given by
\[
M_{N/K}=\frac{{}^*\left( de_G \right){}^*\left( (1-b^{-1}q^{-1})e_I\right)}{{}^*\left((1-b)e_I \right)}.
\]
It is represented by $m = m_{N/K}$ where
\[
m_{\chi\phi}=\begin{cases}d(1-q^{-1})&\text{if $\chi=\chi_0$ and $\phi=\phi_0$}\\
\frac{1-\phi(b)^{-1}q^{-1}}{1-\phi(b)}&\text{if $\chi=\chi_0$ and $\phi\neq\phi_0$}\\
1&\text{if $\chi\neq\chi_0$.}
\end{cases}
\]
As explained in Section \ref{plan} we must show that a representative of $T_{N/K} + C_{N/K} -  M_{N/K}$ 
lies in $\Opt[G]^\times$. 

Combining the results of the previous sections we see that  $T_{N/K} + C_{N/K} -  M_{N/K}$ 
is represented by an element $\omega \in \Q_p^c[G]^\times$ where $\omega = \epsilon\eta / m_{}$.
Let $W_{\theta_2}\in\Opt[G]$ be such that $\chi\phi(W_{\theta_2})=\mathcal N_{K/\Q_p}(\theta_2|\phi)\delta_K$. Then
\[
\begin{split}\omega_{\chi\phi}
&=\begin{cases}
\frac{dp^m}{p^{2m}d(1-q^{-1})}\cdot\frac{1}{\chi\phi(W_{\theta_2})}&\text{if $\chi=\chi_0$ and $\phi=\phi_0$}\\
\frac{\phi(b)^{\tilde m}p^m(1-\phi(b))}{(1-\phi(b))p^{2m}(1-\phi(b)^{-1}q^{-1})}\cdot\frac{1}{\chi\phi(W_{\theta_2})}&\text{if $\chi=\chi_0$ and $\phi\neq\phi_0$}\\
\frac{(-1)^{m+1}\phi(b)^{\tilde m-1}(\chi(a)-1)^{m(p-1)}}{p^{m}\chi(4)\phi(b)^{-2}}\cdot\frac{1}{\chi\phi(W_{\theta_2})}&\text{if $\chi\neq\chi_0$}
\end{cases}\\
&=\begin{cases}
\frac{1}{p^m-1}\cdot\frac{1}{\chi\phi(W_{\theta_2})}&\text{if $\chi=\chi_0$ and $\phi=\phi_0$}\\
\frac{\phi(b)^{\tilde m+1}}{\phi(b)p^m-1}\cdot\frac{1}{\chi\phi(W_{\theta_2})}&\text{if $\chi=\chi_0$ and $\phi\neq\phi_0$}\\
(-1)^{m+1}\frac{\phi(b)^{\tilde m+1}}{\chi(4)}\cdot\left(\frac{(\chi(a)-1)^{p-1}}{p}\right)^m\cdot\frac{1}{\chi\phi(W_{\theta_2})}&\text{if $\chi\neq\chi_0$}
\end{cases}\\
&=\begin{cases}
\frac{\phi(b)^{\tilde m+1}}{\phi(b)p^m-1}\cdot\frac{1}{\chi\phi(W_{\theta_2})}&\text{if $\chi=\chi_0$}\\
(-1)^{m+1}\frac{\phi(b)^{\tilde m+1}}{\chi(4)}\cdot\left(\frac{(\chi(a)-1)^{p-1}}{p}\right)^m\cdot\frac{1}{\chi\phi(W_{\theta_2})}&\text{if $\chi\neq\chi_0$}.
\end{cases}
\end{split}\]
We can easily write $\omega$ as an element of $\Q_p^c[G]^\times$,
\[
\omega=\frac{1}{W_{\theta_2}}\left(\frac{b^{\tilde m+1}}{bq-1}e_a+(-1)^{m+1}b^{\tilde m+1}\sigma_4^{-1}\left(\frac{(a-1)^{p-1}}{p}\right)^{m}(1-e_a)\right),
\]
where $\sigma_4=(4, M/K)\in\gal(M/K)=\langle a\rangle\subseteq G$. 
We have to prove that $\omega \in \Opt[G]^\times$.

Since $K/\Q_p$ is unramified, we know that $\delta_K$ is a unit in $\Opt[G]^\times$. 
Then by \cite[Sec.~I, Prop.~4.3]{Froehlich83} the same is true  for $W_{\theta_2}$. 
Since also $bq-1$ is clearly a unit, we can study 
$\tilde \omega=W_{\theta_2}(bq-1)\omega$
instead of $\omega$. We have
\[\tilde \omega =b^{\tilde m+1}e_a-b^{\tilde m+1}\sigma_4^{-1}\left(-\frac{(a-1)^{p-1}}{p}\right)^{m}(bq-1)(1-e_a).\]
We first show that $\tilde\omega$ is contained in $\Opt[G]$.
To that end it is enough to show that the coefficient of  $b^j$ for all $j$ is contained  in $\Opt[a]$. 
The only non-zero coefficients are those of $b^{\tilde m+1}$ and $b^{\tilde m+2}$ which are, respectively,
\[e_a+\sigma_4^{-1}\left(-\frac{(a-1)^{p-1}}{p}\right)^{m}(1-e_a).\]
and
\[-\sigma_4^{-1}\left(-\frac{(a-1)^{p-1}}{p}\right)^{m}q(1-e_a).\]
The second one has clearly coefficients in $\Zp$.
As for the first one, its integrality is equivalent to
\[1\equiv\chi(\sigma_4)^{-1}\left(-\frac{(\chi(a)-1)^{p-1}}{p}\right)^{m}\pmod{1-\zeta_p},\]
for any non-trivial character $\chi$, which follows from $\frac{(\chi(a)-1)^{p-1}}{p} \equiv -1 \pmod{1-\zeta_p}$. 

We have now shown that $\omega \in \Opt[G]$. By \cite[Cor.~3.8]{Breuning04} $\omega$ is actually a unit
in $\mathcal M^t$ where $\mathcal M^t$ denotes the maximal order in $\Q_p^t[G]$. Here $\Q_p^t = \mathrm{Quot}(\Opt)$ denotes the maximal tamely
ramified extension of $\Qp$. It follows that $\omega \in  \left( \mathcal M^t \right)^\times \cap \Opt[G] = \Opt[G]^\times$.
\qed
\end{section}

\begin{section}*{Acknowledgements}
The second named author would like to thank the Max Planck Institute for Mathematics in Bonn for its hospitality and support during the preparation of this paper.
\end{section}

\begin{tabularx}{\textwidth}{XX}
   Werner Bley & Alessandro Cobbe\\
   Math. Inst. der LMU München &    Math. Inst. der LMU München\\
   Theresienstr. 39  & Theresienstr. 39 \\
   D-80333 M\"unchen & D-80333 M\"unchen\\
   Germany & Germany\\
   bley@math.lmu.de & cobbe@math.lmu.de
\end{tabularx}


\begin{thebibliography}{10}

\bibitem{BlBu03}
W.~Bley and D.~Burns.
\newblock Equivariant epsilon constants, discriminants and \'etale cohomology.
\newblock {\em Proc. London Math. Soc. (3)}, 87(3):545--590, 2003.

\bibitem{BlDe13}
W.~Bley and R.~Debeerst.
\newblock Algorithmic proof of the epsilon constant conjecture.
\newblock {\em Math. Comp.}, 82(284):2363--2387, 2013.

\bibitem{Breuning04}
M.~Breuning.
\newblock Equivariant local epsilon constants and \'etale cohomology.
\newblock {\em J. London Math. Soc. (2)}, 70(2):289--306, 2004.

\bibitem{Breuning04b}
M.~Breuning.
\newblock On equivariant global epsilon constants for certain dihedral
  extensions.
\newblock {\em Math. Comp.}, 73(246):881--898 (electronic), 2004.

\bibitem{BrBu07}
M.~Breuning and D.~Burns.
\newblock Leading terms of {A}rtin {$L$}-functions at {$s=0$} and {$s=1$}.
\newblock {\em Compos. Math.}, 143(6):1427--1464, 2007.

\bibitem{BrBu10}
M.~Breuning and D.~Burns.
\newblock On equivariant {D}edekind zeta-functions at {$s=1$}.
\newblock {\em Doc. Math.}, (Extra volume: Andrei A. Suslin sixtieth
  birthday):119--146, 2010.

\bibitem{Bur01}
D.~Burns.
\newblock Equivariant {T}amagawa numbers and {G}alois module theory. {I}.
\newblock {\em Compositio Math.}, 129(2):203--237, 2001.

\bibitem{BurnsFlach98}
D.~Burns and M.~Flach.
\newblock On {G}alois structure invariants associated to {T}ate motives.
\newblock {\em Amer. J. Math.}, 120(6):1343--1397, 1998.

\bibitem{Chinburg85}
T.~Chinburg.
\newblock Exact sequences and {G}alois module structure.
\newblock {\em Ann. of Math. (2)}, 121(2):351--376, 1985.

\bibitem{Froehlich83}
A.~Fr{\"o}hlich.
\newblock {\em Galois module structure of algebraic integers}, volume~1 of {\em
  Ergebnisse der Mathematik und ihrer Grenzgebiete (3) [Results in Mathematics
  and Related Areas (3)]}.
\newblock Springer-Verlag, Berlin, 1983.

\bibitem{GRW}
K.~W. Gruenberg, J.~Ritter, and A.~Weiss.
\newblock A local approach to {C}hinburg's root number conjecture.
\newblock {\em Proc. London Math. Soc. (3)}, 79(1):47--80, 1999.

\bibitem{Koeck}
B.~K{\"o}ck.
\newblock Galois structure of {Z}ariski cohomology for weakly ramified covers
  of curves.
\newblock {\em Amer. J. Math.}, 126(5):1085--1107, 2004.

\bibitem{Martinet77}
J.~Martinet.
\newblock Character theory and {A}rtin {$L$}-functions.
\newblock In {\em Algebraic number fields: {$L$}-functions and {G}alois
  properties ({P}roc. {S}ympos., {U}niv. {D}urham, {D}urham, 1975)}, pages
  1--87. Academic Press, London, 1977.

\bibitem{Neukirch92}
J.~{Neukirch}.
\newblock {\em {Algebraic number theory. (Algebraische Zahlentheorie.)}}.
\newblock Berlin etc.: Springer-Verlag, 1992.

\bibitem{PickettVinatier}
E.~J. Pickett and S.~Vinatier.
\newblock Self-dual integral normal bases and {G}alois module structure.
\newblock {\em Compos. Math.}, 149(7):1175--1202, 2013.

\bibitem{SerreLocalFields}
J.-P. Serre.
\newblock {\em Local fields}, volume~67 of {\em Graduate Texts in Mathematics}.
\newblock Springer-Verlag, New York, 1979.
\newblock Translated from the French by Marvin Jay Greenberg.

\end{thebibliography}
\end{document}